\documentclass[AMA,STIX1COL]{WileyNJD-v2} 
\usepackage{moreverb,ulem}
\usepackage{tikz}

\usepackage{arydshln}
\usepackage{amsmath}
\usepackage{graphicx} 
\graphicspath{{figures/}} 
\usepackage[absolute]{textpos}

\newcommand{\R}{\mathbb{R}}         
\newcommand{\N}{\mathbb{N}}         

\newcommand{\Ac}{\mathcal{A}}
\newcommand{\Bc}{\mathcal{B}}
\newcommand{\Cc}{\mathcal{C}}
\newcommand{\Dc}{\mathcal{D}}

\newcommand{\Gc}{\mathcal{G}}

\newcommand{\Ls}{\mathscr{L}}



\newcommand{\Vb}{\mathbf{V}}
\newcommand{\Pb}{\mathbf{P}}

\newcommand{\cge}{\succcurlyeq}      
\newcommand{\cle}{\preccurlyeq}
\newcommand{\cl}{\prec}
\newcommand{\cg}{\succ}
\newcommand{\Delf}{\mathbf{\Delta}}

\renewcommand{\ss}{\mathrm{ss}}
\newcommand{\diag}{\mathrm{diag}}
\newcommand{\col}{\mathrm{col}}

\newcommand{\tr}{\mathrm{trace}}

\newcommand{\Del}{\Delta}
\newcommand{\del}{\delta}
\newcommand{\ga}{\gamma}
\newcommand{\eps}{\varepsilon}
\newcommand{\la}{\lambda}

\newcommand{\gaoptgs}{\ga_\mathrm{gs}}

\newcommand{\opt}{\mathrm{opt}}


\newcommand{\mat}[2]{\left(\begin{array}{@{}#1@{}}#2\end{array}\right)} 
\newcommand{\smat}[1]{\left(\begin{smallmatrix}#1\end{smallmatrix}\right)}

\newcommand{\teq}[1]{\quad\text{#1}\quad} 
\newenvironment{red_test}{\color{red}}{} 

\renewcommand{\t}{\tilde}
\newcommand{\h}{\hat}

\newcommand{\mstrut}[1]{\rule{0pt}{#1}}
\let\oldhline=\hline 
\renewcommand{\hline}{\oldhline\mstrut{2.5ex}}
\let\oldhdashline=\hdashline 
\renewcommand{\hdashline}{\oldhdashline\mstrut{2.5ex}}

\newtheorem{Algorithm}[]{Algorithm}

\renewenvironment{subequations}{%
	\refstepcounter{equation}%
	\edef\theparentequation{\theequation}%
	\setcounter{parentequation}{\value{equation}}%
	\setcounter{equation}{0}%
	\def\theequation{\theparentequation\alph{equation}}%
	\ignorespaces
}{%
	\setcounter{equation}{\value{parentequation}}%
	\ignorespacesafterend
}

\newcounter{frstequation}
\newcounter{scndequation}
\newcounter{trdequation}

\makeatletter
\newcommand{\scndlabel}[1]{%
	\protected@write \@auxout {}{\string \newlabel{#1}{{\@currentlabel}{\thepage}{}{equation.\thesection.\the\value{parentequation}}{}} }%
	\hypertarget{equation.\thesection.\the\value{parentequation}}{}
}
\makeatother
%

\makeatletter
\newcommand{\dlabel}[2]{ 
	\setcounter{frstequation}{\value{equation}}%
	\setcounter{scndequation}{\value{equation}}%
	\addtocounter{scndequation}{1}%
	%
	\edef\theequation{\theparentequation\alph{frstequation},\alph{scndequation}}
	\def\@currentlabel{\theparentequation\alph{frstequation}}\scndlabel{#1}
	\def\@currentlabel{\theparentequation\alph{scndequation}}\scndlabel{#2}
	\addtocounter{equation}{1}
}
\makeatother

\makeatletter
\newcommand{\tlabel}[3]{ 
	\setcounter{frstequation}{\value{equation}}
	\setcounter{scndequation}{\value{equation}}%
	\setcounter{trdequation}{\value{equation}}%
	\addtocounter{scndequation}{1}%
	\addtocounter{trdequation}{2}%
	\edef\theequation{\theparentequation\alph{frstequation},\alph{scndequation},\alph{trdequation}}
	\def\@currentlabel{\theparentequation\alph{frstequation}}\scndlabel{#1}
	\def\@currentlabel{\theparentequation\alph{scndequation}}\scndlabel{#2}
	\def\@currentlabel{\theparentequation\alph{trdequation}}\scndlabel{#3}
	\addtocounter{equation}{2}
}
\makeatother


\newcommand\BibTeX{{\rmfamily B\kern-.05em \textsc{i\kern-.025em b}\kern-.08em
		T\kern-.1667em\lower.7ex\hbox{E}\kern-.125emX}}

\articletype{Article Type}%



\begin{document}
	
	\title{Revisiting and Generalizing the Dual Iteration for Static and Robust Output-Feedback Synthesis}
	
	\author[1]{Tobias Holicki*}
	
	\author[1]{Carsten W. Scherer}
	
	
	\authormark{Holicki and Scherer}

	\address[1]{\orgdiv{Department of Mathematics}, \orgname{University of Stuttgart}, \orgaddress{\country{Germany}}}
	
	
	
	\corres{*Tobias Holicki, Department of Mathematics, University of Stuttgart, Pfaffenwaldring 5a, 70569 Stuttgart, Germany. \email{tobias.holicki@imng.uni-stuttgart.de}}
	
	
	\abstract[Abstract]{
		The dual iteration was introduced in a conference paper in 1997 by Iwasaki as an iterative and heuristic procedure for the challenging and non-convex design of static output-feedback controllers. We recall in detail its essential ingredients and go beyond the work of Iwasaki by demonstrating that the framework of linear fractional representations allows for a seamless extension of the dual iteration to output-feedback designs of practical relevance, such as the design of robust or robust gain-scheduled controllers. 
		
		In the paper of Iwasaki, the dual iteration is solely based on, and motivated by algebraic manipulations resulting from the elimination lemma. We provide a novel control theoretic interpretation of the individual steps, which paves the way for further generalizations of the powerful scheme to situations where the elimination lemma is not applicable. As an illustration, we extend the dual iteration to a design of static output-feedback controllers with multiple objectives. We demonstrate the approach with numerous numerical examples inspired from the literature.
	}
	
	\keywords{Static output-feedback synthesis; robust output-feedback synthesis; linear matrix inequalities}
	
	\jnlcitation{\cname{%
	\author{T. Holicki}, and 
	\author{C. W. Scherer}} (\cyear{2021}), 
	\ctitle{Revisiting the Dual Iteration for Static and Robust Output-Feedback Synthesis}, \cjournal{Int J Robust Nonlinear Control}, \cvol{31;11:5427--5459}.}
	
	\maketitle
	
	
	
	\section{Introduction}\label{sec1}
	
	\begin{textblock}{16}(0.5, 14.7)
		\fbox{
			\begin{minipage}{0.9\textwidth}
				\fontsize{8pt}{9pt}\selectfont
				This is the peer reviewed version of the following article: [T. Holicki and C. W. Scherer, Revisiting and Generalizing the Dual Iteration for Static and Robust Output-Feedback Synthesis, \textit{Int. J. Robust Nonlin}, 2021; 31(11):5427-5459], which has been published in final form \href{https://doi.org/10.1002/rnc.5547}{here}. This article may be used for non-commercial purposes in accordance with Wiley Terms and Conditions for Use of Self-Archived Versions. This article may not be enhanced, enriched or otherwise transformed into a derivative work, without express permission from Wiley or by statutory rights under applicable legislation. Copyright notices must not be removed, obscured or modified. The article must be linked to Wiley’s version of record on Wiley Online Library and any embedding, framing or otherwise making available the article or pages thereof by third parties from platforms, services and websites other than Wiley Online Library must be prohibited.
			\end{minipage}
			}
	\end{textblock}

	The design of static output-feedback controllers constitutes a conceptually simple and yet theoretically very challenging problem. Such a design is also a popular approach of practical interest due to its straightforward implementation and the fact that, typically, only some (and not all) states of the underlying dynamical system are available for control. 
	However, in contrast to, e.g., the design of static state-feedback or dynamic full-order controllers, the synthesis of static output-feedback controllers is intrinsically a challenging bilinear matrix inequality (BMI) feasibility problem. Such problems are in general non-convex, non-smooth and NP-hard to solve \cite{TokOez95}. 
	%
	%
	%
	These troublesome properties have led to the development of a multitude of (heuristic) design approaches, which only yield sufficient conditions for the existence of such static controllers. 
	Next to providing only sufficient conditions, another downside of these approaches is that they might get stuck in a local minimum of the underlying optimization problem that can be far away from the global minimum of interest.
	Nevertheless, such approaches are employed and reported to work nicely on various practical examples. Two detailed surveys on static output-feedback design elaborating on several of such approaches are provided in \cite{SyrAbd97, SadPea16}.

	Similar difficulties arise in the general robust output-feedback controllers synthesis problem. Considering this general design problem is of tremendous relevance in practice since any designed controller is required to appropriately deal with 
	the mismatch between the employed model and the real system to be controlled. 
	The general strategy in \cite{ZhoDoy96, Sch01a, GreLim95} is to directly include uncertainty descriptions into the considered models. Depending on the real system, this often amounts to considering dynamical systems that are simultaneously affected by several uncertainties of different types, such as constant parametric, time-varying parametric, dynamic or nonlinear ones. 
	This calls for dedicated design methods with a very high flexibility, similarly as provided by the framework of integral quadratic constraints \cite{MegRan97} (IQCs) for system analysis. Unfortunately, most of the currently available methods lack this flexibility or are not very efficient. 

	\vspace{1ex}

	In this paper we present and extend the dual iteration which paves the way for resolving some of these issues. 
	The dual iteration is a heuristic method introduced in \cite{Iwa97, Iwa99} for designing stabilizing static output-feedback controllers for systems unaffected by uncertainties, which is also considered and conceptually compared to alternative approaches in the survey \cite{SadPea16}. 
	We elaborate in a tutorial fashion on the individual steps of this procedure for the design of static output-feedback $H_\infty$-controllers for linear time-invariant systems. In particular, we demonstrate that all underlying steps can be viewed as algebraic consequences of a general version of the elimination lemma as given, e.g., in \cite{Hel99}. 
	The latter lemma is a very powerful and flexible tool for controller design based on linear matrix inequalities (LMIs), which works perfectly well in tandem with the framework of linear fractional representations \cite{ZhoDoy96, SchWei00, Hof16} (LFRs). 
	As a consequence, this lemma enables us to provide a novel generalization of the dual iteration to a variety of challenging non-convex synthesis problems beyond the design of stabilizing static controllers as considered in \cite{Iwa97, Iwa99}. 
	In particular, we present a seamless extension of the dual iteration to robust $H_\infty$- and robust gain-scheduled $H_\infty$-design in the case that only output measurements are available for control; for these robust designs we consider arbitrarily time-varying parametric uncertainties and rely on IQCs with constant multipliers.
	
	Unfortunately, the elimination lemma does not apply for several interesting controller design problems such as those with multiple objectives, where it is also desirable to have an applicable variant of the dual iteration for static and/or robust design available. 
	To this end, we provide a control theoretic interpretation of the individual steps of the dual iteration, which does not involve the elimination lemma and builds upon \cite{HolSch19}. In \cite{HolSch19}, we have developed a heuristic approach for robust output-feedback design that was motivated by the well-known separation principle. This constitutes the consecutive solution of a full-information design problem and another design problem with a structure that resembles the one in robust estimation. We show that the latter is directly linked to the primal step of the dual iteration. 
	Based on this interpretation, we provide a generalization of the dual iteration to numerous situations where elimination is not possible. As a demonstration, we consider the design of a static output-feedback controller for an LTI system with two performance channels; the controller ensures that the first channel admits a small $H_\infty$-norm in closed-loop and that the second channel satisfies a quadratic performance criterion.
	
	\vspace{1ex}
	
	\noindent\textit{Outline.} %
	The remainder of the paper is organized as follows. After a short paragraph on notation, we recall in full detail the dual iteration for static output-feedback $H_\infty$-design in Sections \ref{SHI::sec::ana} and \ref{SHI::sec::synth}. 
	A novel control theoretic interpretation of the iteration's ingredients is then provided in Section \ref{SHI::sec::interpretation}. 
	We point out novel opportunities offered through this interpretation, by extending the dual iteration to the static output-feedback design of controllers with multiple objectives in Section \ref{SMO::sec::smo}.
	In Section \ref{RS::sec::rs} we show that the dual iteration is not limited to precisely known systems, by considering the practically highly relevant synthesis of robust output-feedback controllers for systems affected by arbitrarily time-varying uncertainties. Moreover, we also comment on further extensions of the iteration to deal, e.g., with the challenging synthesis of robust gain-scheduling controllers.
	The use of all these methods is demonstrated in terms of numerous numerical examples inspired from the literature, which includes a challenging missile autopilot design. Finally, several key auxiliary results are collected in the appendix.
	
	\vspace{1ex}
	\noindent\textit{Notation.} %
	%
	%
	%
	$L_2$ denotes the space of vector-valued square integrable functions with norm $\|x\|^2_{L_2} \!:=\! \int_0^\infty \!x(t)^T\! x(t)\,dt$.
	%
	%
	If $G(s) = D + C(sI - A)^{-1}B$, we write $G=[A, B, C, D]$,
	$G_{\ss} = \smat{A & B \\ C & D}$ and use $G^\ast = [-A^T,  C^T,  -B^T,  D^T]$ as well as $-G^\ast = [-A^T,  -C^T,  -B^T,  -D^T]$. 
	For matrices $A, B, C, D$, $X_1, \dots, X_N$, $X, P$, we employ the abbreviations $\mathrm{He}(X) := X + X^T$ and  
	\begin{equation*}
		\diag(X_1, \dots, X_N) := \mat{ccc}{X_1 & & 0 \\ & \ddots & \\ 0 & & X_N}
		\teq{ and }
		\Ls\left(X, P, \mat{cc}{A & B \\ C & D}\right) := \mat{cc}{I & 0 \\ A & B \\ \hline C & D}^T \mat{c|c}{X & 0 \\ \hline 0 & P}\mat{cc}{I & 0 \\ A & B \\ \hline C & D}.
	\end{equation*}
	%
	Finally, objects that can be inferred by symmetry or are not relevant are indicated by ``$\bullet$''.

	\section{Static Output-Feedback $H_\infty$-Design}\label{SHI::sec::shi}
	
	In this section, we recall the essential features of the dual iteration for static output-feedback design as proposed in \cite{Iwa97, Iwa99} in a tutorial fashion. In contrast to \cite{Iwa97, Iwa99} we directly include an $H_\infty$-performance criterion and, as the key point, we provide a novel control theoretic interpretation of the individual steps of the iteration. We reveal that this allows for interesting extensions as exemplified in the next section. 
	We begin by very briefly recalling the underlying definitions and analysis results.
	
	\subsection{Analysis}\label{SHI::sec::ana}
	
	For some real matrices of appropriate dimensions and some initial condition $x(0) \in \R^n$, we consider the system
	\begin{equation}
		\arraycolsep=3pt
		\mat{c}{\dot x(t) \\  e(t)} = \mat{cc}{A & B \\  C & D} \mat{c}{x(t)  \\ d(t)} \quad
		\label{SHI::eq::sys}
	\end{equation}
	for $t\geq 0$; here, $d\in L_2$ is a generalized disturbance and $e$ is the performance output which is desired to be small in the $L_2$-norm.
	The energy gain of the system \eqref{SHI::eq::sys} coincides with the $H_\infty$-norm of \eqref{SHI::eq::sys} and is defined in a standard fashion as follows.
	
	\begin{definition}
		\label{SHI::def::stab}
		The system \eqref{SHI::eq::sys} is said to admit an energy gain smaller than $\ga>0$ if $A$ is Hurwitz and there exists an $\eps > 0$ such that
		$\|e\|_{L_2}^2 \leq (\ga^2 - \eps) \|d\|_{L_2}^2$ for all $d \in L_2$ and for $x(0) = 0$.
		The energy gain of the system \eqref{SHI::eq::sys} is the infimal $\ga > 0$ such that the latter inequality is satisfied.
	\end{definition}
	\vspace{1ex}
	
	We have the following well-known analysis result which is often referred to as bounded real lemma (see, e.g., Section 2.7.3 of \cite{BoyGha94}) and constitutes a special case of the KYP lemma \cite{Ran96}.
	
	\begin{lemma}
		\label{SHI::lem::stab}
		Let $P_\ga := \smat{I & 0 \\ 0 & -\ga^2 I}$ and $G(s) := C(sI - A)^{-1}B + D$ be the transfer matrix corresponding to \eqref{SHI::eq::sys}. Then the system \eqref{SHI::eq::sys} admits an energy gain smaller than $\ga$ if and only if there exists a symmetric matrix $X$ satisfying
		\begin{equation}
			\label{SHI::lem::lmi_stab}
			X \cg 0
			\teq{ and }
			\Ls\left(\mat{cc}{0 & X \\ X & 0}, P_\ga, \mat{c}{G \\ I}_\ss \right) = 
			\mat{cc}{I & 0 \\ A & B}^T \mat{cc}{0 & X \\ X & 0}\mat{cc}{I & 0 \\ A & B} + \mat{cc}{C & D \\ 0 & I}^T P_\ga \mat{cc}{C & D \\ 0 & I} \cl 0.
			%
		\end{equation}
		Moreover, $\|G\|_{\infty} = \sup_{\omega \in \R }\|G(i\omega)\|$ equals the infimal $\ga > 0$ such that the above LMIs are feasible.
	\end{lemma}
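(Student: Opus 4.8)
The plan is to interpret the two conditions in \eqref{SHI::lem::lmi_stab} as a Lyapunov inequality and an infinitesimal dissipation inequality, and to bridge them to Definition \ref{SHI::def::stab} by integration along trajectories in one direction and a frequency-domain argument in the other. Expanding the right-hand inequality of \eqref{SHI::lem::lmi_stab} along a solution of \eqref{SHI::eq::sys}, where $\dot x = Ax + Bd$ and $e = Cx + Dd$, the middle block $\smat{0 & X \\ X & 0}$ contributes $2x^T X\dot x = \frac{d}{dt}(x^T X x)$ while the block $P_\ga$ contributes $e^T e - \ga^2 d^T d$. Hence $\Ls(\smat{0 & X \\ X & 0}, P_\ga, \smat{G \\ I}_\ss) \cl 0$ states precisely that the storage function $V(x) := x^T X x$ obeys $\frac{d}{dt}V + e^T e - \ga^2 d^T d < 0$ for all nonzero $(x,d)$.

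For the ``if'' direction I would first set $d = 0$ in the right inequality, which reduces it to $x^T(A^T X + X A + C^T C)x < 0$ and thus to $A^T X + X A \cl 0$; together with $X \cg 0$ this certifies that $A$ is Hurwitz. Using strictness, I pick $\eps > 0$ with the associated quadratic form bounded above by $-\eps(x^T x + d^T d)$, so that $\frac{d}{dt}(x^T X x) + e^T e - \ga^2 d^T d \le -\eps\, d^T d$ along trajectories. Integrating over $[0,\infty)$ with $x(0) = 0$ and using $x^T X x \ge 0$ then yields $\|e\|_{L_2}^2 \le (\ga^2 - \eps)\|d\|_{L_2}^2$, which is exactly the bound of Definition \ref{SHI::def::stab}.

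The harder ``only if'' direction is where I expect the real work. Assuming an energy gain smaller than $\ga$, I would convert the time-domain bound through Parseval's theorem into the strict frequency-domain inequality $G(i\omega)^\ast G(i\omega) \cl \ga^2 I$ for all $\omega \in \R \cup \{\infty\}$; here the hypothesis that $A$ is Hurwitz makes $G \in \rhi$, and evaluation at $\omega = \infty$ forces $\ga^2 I - D^T D \cg 0$. The task is then to synthesize a symmetric $X$ from this inequality, which is precisely the content of the KYP lemma alluded to after the statement: since $A$ is Hurwitz and the strict inequality holds on the compact set $\R \cup \{\infty\}$, the associated Hamiltonian matrix has no eigenvalues on the imaginary axis, and the stabilizing solution $X_s \cge 0$ of the corresponding algebraic Riccati equation satisfies the dissipation relation with equality, i.e.\ the right inequality of \eqref{SHI::lem::lmi_stab} non-strictly. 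The delicate bookkeeping, which I would carry out last, is to upgrade this to the required strict inequalities: the margin $\|G\|_\infty < \ga$ lets me run the argument at a slightly smaller value of $\ga$ to render the dissipation inequality strict, and replacing $X_s$ by $X_s + \delta I$ for small $\delta > 0$ preserves this strictness while enforcing $X \cg 0$. Finally, the ``moreover'' claim follows at once from the equivalence just proved: the LMIs \eqref{SHI::lem::lmi_stab} are feasible for a given $\ga$ if and only if $\ga > \|G\|_\infty$, so the infimal feasible $\ga$ equals $\|G\|_\infty = \sup_{\omega \in \R}\|G(i\omega)\|$.
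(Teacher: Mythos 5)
The paper itself never proves Lemma \ref{SHI::lem::stab}; it is invoked as the classical bounded real lemma with citations to the literature, so your proposal must be judged against the standard textbook argument. Your sufficiency direction (``if'') is correct and complete: the identification of the quadratic form with $\tfrac{d}{dt}(x^TXx) + e^Te - \ga^2 d^Td$, the specialization $d=0$ giving $A^TX+XA \cl 0$ and hence that $A$ is Hurwitz, and the integration argument in which $x(T)^TXx(T)\cge 0$ is dropped are exactly the standard dissipation proof, and they match Definition \ref{SHI::def::stab} including the $\eps$-margin.

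The necessity direction has a genuine gap in its final step. Let $X_s \cge 0$ be the stabilizing solution of the Riccati equation at a level $\ga' \in (\|G\|_\infty,\ga)$. Completing the square with that equation shows that the level-$\ga$ quadratic form evaluated at $X_s$ equals $-\|Lx - (\ga'^2I-D^TD)^{1/2}d\|^2 - (\ga^2-\ga'^2)\|d\|^2$ with $L := (\ga'^2I-D^TD)^{-1/2}(B^TX_s+D^TC)$, so it vanishes on all vectors $(x,0)$ with $x \in \ker L$; running the argument ``at a slightly smaller $\ga$'' therefore buys strictness only in the $d$-directions, not in the $x$-directions. Moreover, the perturbation $X_s \mapsto X_s + \delta I$ cannot repair this, because it adds $\delta(A^T+A)$ to the $(1,1)$ block and $A^T+A$ need not be negative semidefinite for a Hurwitz $A$. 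Concretely, take
\begin{equation*}
A = \mat{cc}{-1 & 10 \\ 0 & -1}, \quad B = \mat{c}{1 \\ 0}, \quad C = 0, \quad D = 0,
\end{equation*}
so that $G = 0$ and the stabilizing Riccati solution at every level is $X_s = 0$. Your candidate $X = \delta I$ satisfies the strict LMI at level $\ga$ if and only if $(A^T+A) + \delta\ga^{-2}BB^T \cl 0$, which fails for \emph{every} $\delta > 0$ since $A^T+A$ has eigenvalues $8$ and $-12$; yet the lemma is true here (any small multiple of the solution of $A^TX+XA = -I$ works). The standard repair is different: apply the Hamiltonian/ARE existence argument not to $(A,B,C,D)$ at a smaller level but to the output-augmented plant $G_\eps(s) = \smat{C \\ \eps I}(sI-A)^{-1}B + \smat{D \\ 0}$, where $\eps>0$ is chosen small enough that $\|G_\eps\|_\infty < \ga$. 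Its stabilizing solution $X \cge 0$ satisfies $A^TX+XA+C^TC+\eps^2I+(XB+C^TD)(\ga^2I-D^TD)^{-1}(B^TX+D^TC) = 0$, whose Schur complement form is precisely the strict LMI at level $\ga$; and $X \cg 0$ then follows automatically from $X\cge 0$ together with $A^TX+XA \cl 0$, with no perturbation of $X$ needed. With that step replaced, your proof (including the ``moreover'' claim) is sound.
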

	
	In our opinion, the abbreviation $\Ls(\cdot, \cdot, \cdot)$ in \eqref{SHI::lem::lmi_stab} is particularly well-suited for capturing the essential ingredients of inequalities related to the KYP lemma. Thus we make use of it throughout this paper. The involved symmetric matrix $X$ is usually referred to as a (KYP) certificate or as a Lyapunov matrix.

	
	\subsection{Synthesis}\label{SHI::sec::synth}
	
	\subsubsection{Problem Description}

	For fixed real matrices of appropriate dimensions and some initial conditions $x(0) \in \R^n$, we now consider the open-loop system%
	\begin{equation}
		\arraycolsep=1pt
		\mat{c}{\dot x(t) \\\hline  e(t) \\ y(t)} 
		= \mat{c|cc}{A & B_1 & B_2  \\ \hline C_1 & D_{11} & D_{12}  \\ C_2 & D_{21} & 0} 
		\mat{c}{x(t) \\\hline  d(t) \\ u(t)}
		\label{SHI::eq::sys_of}
	\end{equation}
	for $t\geq 0$; here, $u$ is the control input and $y$ is the measured output.
	Our main goal in this section is the design of a static output-feedback controller for the system  \eqref{SHI::eq::sys_of} with description
	\begin{equation}
		u(t) = K y(t)
		\label{SHI::eq::con_of2}
	\end{equation}
	such that the corresponding closed-loop energy gain is as small as possible. The latter closed-loop interconnection is given by%
	\begin{equation}
		\arraycolsep=2pt
		\mat{c}{\dot x(t) \\ e(t)} 
		= \mat{cc}{\Ac & \Bc  \\ 
			\Cc & \Dc } 
		\mat{c}{x(t)\\ d(t)}
		= \left( \mat{cc}{A & B_1 \\ C_1 & D_{11}} + \mat{c}{B_2 \\ D_{12}}K \mat{cc}{C_2 & D_{21}}\right) \mat{c}{x(t)\\ d(t)}
		\label{SHI::eq::cl_of}
	\end{equation}
	with $t\geq 0$.
	Note that the system \eqref{SHI::eq::cl_of} is of the same form as \eqref{SHI::eq::sys}, which allows for its analysis based on the bounded real lemma \ref{SHI::lem::stab}. 
	As usual, trouble arises through the simultaneous search for some certificate $X$ and a controller gain $K$, which is a very difficult non-convex BMI problem. 
	A remedy for a multitude of controller synthesis problems is a convexifying parameter transformation that has been proposed in \cite{MasOha98, Sch96b}. Another option is given by the elimination lemma as developed in \cite{Hel99, GahApk94}. 
	The latter lemma is well-known in the LMI literature, but since we will apply it frequently, we provide the result as Lemma \ref{RS::lem::elimination} together with a constructive proof in the appendix.
	In particular, by directly using the elimination lemma on the closed-loop analysis LMIs, we immediately obtain the following well-known result.
	

	\begin{theorem}
		\label{SHI::theo::of}
		Let $G_{11} := [A, B_1, C_1, D_{11}]$. Further, let $V$ and $U$ be basis matrices of $\ker(C_2, D_{21})$ and $\ker(B_2^T, D_{12}^T)$, respectively. Then there exists a static controller \eqref{SHI::eq::con_of2} for the system \eqref{SHI::eq::sys_of} such that the analysis LMIs \eqref{SHI::lem::lmi_stab} are feasible for the corresponding closed-loop system if and only if there exists a symmetric matrix $X$ satisfying 
		\begin{equation*}
			\arraycolsep=3pt
			X \cg 0, \quad
			V^T\Ls\left(\mat{cc}{0 & X \\ X & 0}, P_\ga, \mat{c}{G_{11} \\ I
			}_{\ss} \right) V \cl 0
			\teq{ and }
			U^T\Ls\left(\mat{cc}{0 & X^{-1} \\ X^{-1} & 0}, P_\ga^{-1}, \mat{c}{I  \\ -G_{11}^\ast}_{\ss} \right) U \cg 0.
		\end{equation*}
		Moreover, the infimal $\ga > 0$ such that there exists some symmetric $X$ satisfying the above inequalities is equal to
		\begin{equation*}
			\ga_\opt := \inf\left\{ \ga > 0~\middle| 
			\text{There exists a controller \eqref{SHI::eq::con_of2} s.th. the analysis LMIs \eqref{SHI::lem::lmi_stab} are feasible in $X$ for the interconnection \eqref{SHI::eq::cl_of}
			}\right\}.
		\end{equation*}
	\end{theorem}
	
	By the elimination lemma we are able to remove the controller gain $K$ from the analysis LMIs for the closed-loop system \eqref{SHI::eq::cl_of}. However, the variable $X$ now enters the above inequalities in a non-convex fashion. Therefore, determining $\ga_\opt$ or computing a suitable static controller \eqref{SHI::eq::con_of2} remain difficult.
	%
	Note that this underlying non-convexity is not limited to the employed elimination based approach, but seems to be an intrinsic feature of the static controller synthesis problem. 
	Thus the latter problem is usually tackled by heuristic approaches, and upper bounds on $\ga_\opt$ are computed.
	%
	In the sequel, we present the dual iteration from \cite{Iwa97, Iwa99} which is a heuristic procedure based on iteratively solving convex semi-definite programs. We will argue that this iteration is especially useful if compared to other approaches since it provides good upper bounds on $\ga_\opt$ and since it seamlessly generalizes, for example, to robust design problems.
	Its essential features are discussed next.

	\subsubsection{Dual Iteration: Initialization}\label{SHI::sec::dual_init}
	
	In order to initialize the dual iteration, we propose a starting point that allows the computation of a lower bound on $\ga_\opt$ as a valuable indicator of how conservative any later computed upper bound on $\ga_\opt$ is.
	%
	This lower bound is obtained by the following observation. If there exists a static controller \eqref{SHI::eq::con_of2} for the system \eqref{SHI::eq::sys_of} achieving a closed-loop energy gain of $\ga$, then there also exists a dynamic\footnote{In this paper and for brevity, dynamic controllers is are always of full-order, i.e., they have the same number of states as the underlying open-loop system.} controller with description
	\begin{equation}
		\mat{c}{\dot x_c(t)  \\ u(t)} 
		= \mat{cc}{A^c & B^c \\
			C^c & D^c} \mat{c}{x_c(t)  \\ y(t)}
		\label{SHI::eq::con_of3}
	\end{equation}
	which achieves (at least) the same closed-loop energy gain. Indeed, by simply choosing $A^c = -I_n$, $B^c = 0$, $C^c = 0$ and $D^c = K$, we observe that the energy gain of \eqref{SHI::eq::cl_of} is identical to the one of the closed-loop interconnection of the system \eqref{SHI::eq::sys_of} and the dynamic controller \eqref{SHI::eq::con_of3}. Note that, the matrix $-I_n$ can be replaced by any other stable matrix in $\R^{n \times n}$.
	It is well-known that the problem of finding a dynamic controller \eqref{SHI::eq::con_of3} for the system \eqref{SHI::eq::sys_of} is a convex optimization problem with the following solution which is, again, obtained by applying the elimination lemma \ref{RS::lem::elimination}. A proof can also be found, e.g., in \cite{GahApk94, IwaSke94}.

	\begin{theorem}
		\label{SHI::theo::gs}
		Let $G_{11}$, $U$ and $V$ be as in Theorem \ref{SHI::theo::of}. Then there exists a dynamic controller \eqref{SHI::eq::con_of3} for the system \eqref{SHI::eq::sys_of} such that the analysis LMIs \eqref{SHI::lem::lmi_stab} are feasible for the corresponding closed-loop system if and only if there exist symmetric matrices $X$ and $Y$ satisfying 
		\begin{subequations}
			\label{SHI::theo::eq::lmi_gs}
			\begin{equation}
				\arraycolsep=3pt
				\mat{cc}{X & I \\ I & Y} \cg 0,\quad
				V^T\Ls\left(\mat{cc}{0 & X \\ X & 0}, P_\ga, \mat{c}{G_{11} \\ I 
				}_{\ss} \right) V \cl 0
				\teq{ and }
				U^T\Ls\left(\mat{cc}{0 & Y \\ Y & 0}, P_\ga^{-1}, \mat{c}{I  \\ -G_{11}^\ast 
				}_{\ss} \right) U \cg 0.
				\tlabel{SHI::theo::eq::lmi_gsa}{SHI::theo::eq::lmi_gsb}{SHI::theo::eq::lmi_gsc}
			\end{equation}
		\end{subequations}
		In particular, we have
		$\ga_{\mathrm{dof}} \leq \ga_\opt$
		for $\ga_\mathrm{dof}$ being the infimal $\ga > 0$ such that 
		the  LMIs \eqref{SHI::theo::eq::lmi_gs} are feasible.
	\end{theorem}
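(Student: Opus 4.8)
The plan is to realize the closed loop as a system of the form \eqref{SHI::eq::sys} of doubled state dimension $2n$ and then to strip off the controller data with the elimination lemma. Interconnecting \eqref{SHI::eq::sys_of} with the full-order controller \eqref{SHI::eq::con_of3} produces closed-loop matrices that depend \emph{affinely} on the controller gain $\mathbf{K} := \smat{A^c & B^c \\ C^c & D^c}$; concretely one may write
\begin{equation*}
\mat{cc}{\Ac & \Bc \\ \Cc & \Dc} = \mat{cc}{\smat{A & 0 \\ 0 & 0} & \smat{B_1 \\ 0} \\ \smat{C_1 & 0} & D_{11}} + \Bb\,\mathbf{K}\,\Cb, \quad \Bb := \mat{cc}{0 & B_2 \\ I & 0 \\ 0 & D_{12}}, \quad \Cb := \mat{ccc}{0 & I & 0 \\ C_2 & 0 & D_{21}}.
\end{equation*}
By the bounded real lemma \ref{SHI::lem::stab}, a full-order controller rendering the analysis LMIs \eqref{SHI::lem::lmi_stab} feasible exists if and only if there is some $\Xc \cg 0$ satisfying the corresponding closed-loop KYP inequality. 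Since $\mathbf{K}$ enters this inequality only through $\Bb\,\mathbf{K}\,\Cb$, I would next eliminate it using Lemma \ref{RS::lem::elimination}.

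Eliminating $\mathbf{K}$ yields two inequalities, governed by bases of $\ker \Cb$ and of $\ker \Bb^T$, where the second appears in the standard dualized KYP form (replacing $G_{11}$ by $-G_{11}^\ast$, $P_\ga$ by $P_\ga^{-1}$, and $\Xc$ by $\Xc^{-1}$). Computing the kernels is routine: $\ker \Cb$ forces the controller coordinate to vanish and $(x,d) \in \ker(C_2, D_{21})$, so its basis is $V$ padded by zeros; symmetrically $\ker \Bb^T$ reduces to $U$. The decisive point, and the step I expect to be the main obstacle, is the decoupling: because the controller is full-order, the controller blocks of $\Bb$ and $\Cb$ are identities, so in these kernel directions the projected forms see only the top-left $n \times n$ blocks $X := (\Xc)_{11}$ and $Y := (\Xc^{-1})_{11}$. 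With this identification the two eliminated inequalities collapse \emph{exactly} to the primal $V$-condition \eqref{SHI::theo::eq::lmi_gsb} in $X$ and the dualized $U$-condition \eqref{SHI::theo::eq::lmi_gsc} in $Y$.

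It remains to relate $X$ and $Y$ to the existence of a positive certificate $\Xc$. For necessity, $\Xc \cg 0$ already forces $X \cg 0$ and $(\Xc^{-1})_{11} \cge ((\Xc)_{11})^{-1}$, i.e.\ $Y \cge X^{-1}$, which by a Schur complement is precisely the coupling \eqref{SHI::theo::eq::lmi_gsa} (a standard perturbation of $\Xc$ upgrades this to the strict inequality required). For sufficiency I invoke the standard matrix completion lemma \cite{GahApk94}: given symmetric $X, Y$ with \eqref{SHI::theo::eq::lmi_gsa}, there exists $\Xc \in \R^{2n \times 2n}$, $\Xc \cg 0$, with prescribed blocks $(\Xc)_{11} = X$ and $(\Xc^{-1})_{11} = Y$, subject only to a rank condition that is vacuous for full order $n_c = n$. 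Running the elimination lemma backwards with this $\Xc$ then delivers a suitable $\mathbf{K}$, hence a controller \eqref{SHI::eq::con_of3}, which closes the equivalence.

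Finally, the bound $\ga_\mathrm{dof} \leq \ga_\opt$ is immediate from the embedding already exploited for the initialization: every static gain $K$ is realized as a full-order controller via $A^c = -I$, $B^c = 0$, $C^c = 0$, $D^c = K$ with identical closed-loop energy gain. Thus every $\ga$ admissible for the static design \eqref{SHI::eq::con_of2} is admissible for the full-order dynamic design, so the infimum over the larger class can only be smaller, giving $\ga_\mathrm{dof} \leq \ga_\opt$.
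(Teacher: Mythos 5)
Your proposal is correct and takes essentially the same route as the paper, which obtains Theorem \ref{SHI::theo::gs} precisely by applying the elimination lemma \ref{RS::lem::elimination} to the closed-loop analysis LMIs with the controller parameter $\mathbf{K}$ entering affinely, deferring the details to \cite{GahApk94}. Your write-up fills in exactly those standard details (kernel bases padded by zeros, identification of $X=(\Xc)_{11}$ and $Y=(\Xc^{-1})_{11}$, the completion/perturbation step for the coupling LMI, and the static-to-dynamic embedding yielding $\ga_\mathrm{dof}\leq\ga_\opt$), all of which are sound.
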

	
	In a standard fashion and by using the Schur complement on the LMI \eqref{SHI::theo::eq::lmi_gsc}, it is possible to solve the LMIs \eqref{SHI::theo::eq::lmi_gs} while simultaneously minimizing over $\ga$ in order to compute $\ga_\mathrm{dof}$. In particular, as the latter is a lower bound on $\ga_\opt$ it is not possible to find a static output-feedback controller with an energy gain smaller than $\ga_\mathrm{dof}$. 
	
	\vspace{2ex}
	
	As an intermediate step, if the LMIs \eqref{SHI::theo::eq::lmi_gs} are feasible, we note that we can easily design a static full-information controller $u = F\t y = (F_1, F_2)\t y$ for the system \eqref{SHI::eq::sys_of} such that the analysis LMIs \eqref{SHI::lem::lmi_stab}  are feasible for the corresponding closed-loop system; here, the measurements $y$ are replaced by the virtual measurements $\t y := \smat{x \\ d}$ and the resulting interconnection is explicitly given by
	\begin{equation}
		\mat{c}{\dot x(t) \\ e(t)}
		= \mat{cc}{A + B_2 F_1 & B_1 + B_2F_2 \\ 
			C_1 + D_{12}F_1 & D_{11} + D_{12}F_2}
		\mat{c}{x(t)  \\ d(t)}
		= \left(\mat{cc}{A& B_1 \\ C_1 & D_{11}}
		+ \mat{c}{B_2 \\ D_{12}} F
		\right)
		\mat{c}{x(t)  \\ d(t)}.
		\label{SHI::eq::clF}
	\end{equation}
	Indeed, by applying the elimination lemma \ref{RS::lem::elimination}, we immediately obtain the following convex synthesis result.
	
	\begin{lemma}
		\label{SHI::lem::full_info}
		There exists some full-information gain $F$ such that the analysis LMIs \eqref{SHI::lem::lmi_stab} are feasible for the system \eqref{SHI::eq::clF} if and only if there exists a symmetric matrix $Y \cg 0$ satisfying \eqref{SHI::theo::eq::lmi_gsc}.
	\end{lemma}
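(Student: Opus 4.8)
The plan is to eliminate the gain $F$ from the closed-loop analysis LMI of the full-information system \eqref{SHI::eq::clF} by means of the elimination lemma \ref{RS::lem::elimination}. The first step is to render the analysis LMI \eqref{SHI::lem::lmi_stab} \emph{affine} in the closed-loop data $\smat{\Ac & \Bc \\ \Cc & \Dc} = \smat{A & B_1 \\ C_1 & D_{11}} + \smat{B_2 \\ D_{12}}F$. As it stands, $\Ls(\cdot,\cdot,\cdot)$ is quadratic in $(\Cc,\Dc)$ and hence in $F$, but a Schur complement with respect to the performance weight $P_\ga = \smat{I & 0 \\ 0 & -\ga^2 I}$ opens up the performance channel and turns \eqref{SHI::lem::lmi_stab} into the equivalent inequality
\begin{equation*}
	\mat{ccc}{\Ac^T X + X\Ac & X\Bc & \Cc^T \\ \Bc^T X & -\ga^2 I & \Dc^T \\ \Cc & \Dc & -I} \cl 0,
\end{equation*}
which is now affine in $(\Ac,\Bc,\Cc,\Dc)$ and therefore in $F$.

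Substituting the closed-loop formulas, this inequality takes the form $\Psi(X) + \mathcal{P}^T F \mathcal{Q} + \mathcal{Q}^T F^T \mathcal{P} \cl 0$ with $\mathcal{P} = \mat{ccc}{B_2^T X & 0 & D_{12}^T}$, $\mathcal{Q} = \mat{ccc}{I & 0 & 0 \\ 0 & I & 0}$, and $\Psi(X)$ collecting the $F$-independent terms. The elimination lemma \ref{RS::lem::elimination} then guarantees that a suitable $F$ exists if and only if the two projected inequalities $\mathcal{Q}_\perp^T \Psi(X)\mathcal{Q}_\perp \cl 0$ and $\mathcal{P}_\perp^T \Psi(X)\mathcal{P}_\perp \cl 0$ hold. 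The first is automatic: $\ker\mathcal{Q}$ is spanned by $\mathcal{Q}_\perp = \smat{0 \\ 0 \\ I}$, so $\mathcal{Q}_\perp^T\Psi(X)\mathcal{Q}_\perp$ equals the $(3,3)$ block $-I \cl 0$. Hence only the $\mathcal{P}$-projection survives.

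It remains to recognize the surviving condition as $Y \cg 0$ together with \eqref{SHI::theo::eq::lmi_gsc} for $Y := X^{-1}$. Since $X \cg 0$, a basis of $\ker\mathcal{P} = \{(w_1,w_2,w_3) : B_2^T X w_1 + D_{12}^T w_3 = 0\}$ is obtained from a basis $U = \smat{U_1 \\ U_2}$ of $\ker(B_2^T, D_{12}^T)$ through $w_1 = X^{-1}U_1$, $w_3 = U_2$ (with $w_2$ free), i.e. $\mathcal{P}_\perp = \smat{X^{-1}U_1 & 0 \\ 0 & I \\ U_2 & 0}$. Carrying out this congruence converts the certificate $X$ into $Y = X^{-1}$ and, after identifying the state-space realization of $\smat{I \\ -G_{11}^\ast}$, rewrites $\mathcal{P}_\perp^T\Psi(X)\mathcal{P}_\perp \cl 0$ (via the standard dualization that swaps $P_\ga$ for $P_\ga^{-1}$, $\smat{G_{11} \\ I}$ for $\smat{I \\ -G_{11}^\ast}$, and the sense of the inequality) as exactly $U^T\Ls\left(\smat{0 & Y \\ Y & 0}, P_\ga^{-1}, \smat{I \\ -G_{11}^\ast}_\ss\right)U \cg 0$, which is \eqref{SHI::theo::eq::lmi_gsc}.

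I expect this dualization bookkeeping to be the main obstacle: one must check that the $\mathcal{P}_\perp$-congruence of $\Psi(X)$ reproduces the $P_\ga^{-1}$- and $-G_{11}^\ast$-structure with certificate $X^{-1}$, rather than merely a condition of matching size. This is, however, precisely the primal-to-dual passage already packaged in Theorem \ref{SHI::theo::of}, which suggests a shorter route that avoids redoing the elimination: the interconnection \eqref{SHI::eq::clF} is the static output-feedback problem of Theorem \ref{SHI::theo::of} for the plant whose measurement is $\t y = (\t C_2, \t D_{21})\smat{x \\ d}$ with $(\t C_2, \t D_{21}) = I$. For this plant $\ker(\t C_2, \t D_{21}) = \{0\}$, so the basis matrix $V$ is empty and the corresponding inequality $V^T\Ls(\cdot,\cdot,\cdot)V \cl 0$ is a vacuous $0 \times 0$ condition; Theorem \ref{SHI::theo::of} then collapses to $X \cg 0$ and the $U$-projected inequality with $X^{-1}$, which upon setting $Y = X^{-1}$ is the claim. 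Either way the asserted equivalence, including the identification of the full-information gain with the controller gain ($F = K$), follows at once.
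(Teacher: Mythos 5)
Your proposal is correct and takes essentially the paper's own route: the paper obtains Lemma \ref{SHI::lem::full_info} precisely by applying the elimination lemma \ref{RS::lem::elimination} to the closed-loop analysis LMI for \eqref{SHI::eq::clF}, which is exactly the elimination you carry out in detail (the Schur complement to the $F$-affine form, the vacuous $\mathcal{Q}$-projection, and the $X^{-1}$-weighted kernel basis whose congruence reproduces \eqref{SHI::theo::eq::lmi_gsc} with $Y = X^{-1}$ all check out). Your shorter route via Theorem \ref{SHI::theo::of} with $(\t C_2, \t D_{21}) = I$ and an empty annihilator $V$ is the same elimination argument packaged one level higher, not a different method.
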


	\subsubsection{Dual Iteration}\label{SHI::sec::dual}

	We are now in the position to discuss the core of the dual iteration from \cite{Iwa97, Iwa99}. 
	The first key result provides LMI conditions that are sufficient for static output-feedback design based on the assumption that a full-information gain $F = (F_1, F_2)$ is available. 

	\begin{theorem}
		\label{SHI::theo::ofF}
		Let $G_{11}$ and $V$ be as in Theorem \ref{SHI::theo::of} and let $G^F_{11}$ be the transfer matrix corresponding to \eqref{SHI::eq::clF}. Further, suppose that $A + B_2F_1$ is stable. Then there exists a static controller \eqref{SHI::eq::con_of2} for the system \eqref{SHI::eq::sys_of} such that the analysis LMIs \eqref{SHI::lem::lmi_stab} are feasible for the corresponding closed-loop system if there exists a symmetric matrix $X$ satisfying
		\begin{subequations}
			\label{SHI::theo::eq::lmi_ofF}
			\begin{equation}
				\arraycolsep=3pt
				%
				V^T\Ls\left(\mat{cc}{0 & X \\ X & 0},  P_\ga, \mat{c}{G_{11}  \\ I 
				}_{\ss} \right) V \cl 0
				\teq{ and }
				\Ls\left(\mat{cc}{0 & X \\ X & 0},  P_\ga, \mat{c}{G^F_{11} \\ I 
				}_{\ss} \right) \cl 0.
				\dlabel{SHI::theo::eq::lmi_ofFa}{SHI::theo::eq::lmi_ofFb}
			\end{equation}
		\end{subequations}
		Moreover, we have
		$\ga_\mathrm{dof} \leq \ga_\opt \leq \ga_F$
		for $\ga_F$ being the infimal $\ga > 0$ such that the LMIs \eqref{SHI::theo::eq::lmi_ofF} are feasible.
	\end{theorem}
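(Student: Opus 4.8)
The plan is to reduce the two sufficient inequalities \eqref{SHI::theo::eq::lmi_ofFa}--\eqref{SHI::theo::eq::lmi_ofFb} to the three inequalities of Theorem \ref{SHI::theo::of}, which are \emph{necessary and sufficient} for the existence of a static gain $K$ rendering \eqref{SHI::lem::lmi_stab} feasible for the closed loop \eqref{SHI::eq::cl_of}. The first inequality \eqref{SHI::theo::eq::lmi_ofFa} is verbatim the $V$-inequality in Theorem \ref{SHI::theo::of}, so all the work lies in extracting from the single additional inequality \eqref{SHI::theo::eq::lmi_ofFb} both the definiteness $X \cg 0$ and the dual $U$-inequality. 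Once this is done, Theorem \ref{SHI::theo::of} applies \emph{with the same $X$} and produces the controller; the bound $\ga_\opt \leq \ga_F$ then follows since \eqref{SHI::theo::eq::lmi_ofF} is only sufficient (each feasible $\ga$ yields, via Lemma \ref{SHI::lem::stab}, a static controller of closed-loop energy gain below $\ga$), whereas $\ga_\mathrm{dof} \leq \ga_\opt$ is precisely Theorem \ref{SHI::theo::gs}.

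For $X \cg 0$ I would read off the left-upper $n \times n$ block of \eqref{SHI::theo::eq::lmi_ofFb}. With $G^F = [A+B_2F_1,\, B_1+B_2F_2,\, C_1+D_{12}F_1,\, D_{11}+D_{12}F_2]$ this block equals $(A+B_2F_1)^TX + X(A+B_2F_1) + (C_1+D_{12}F_1)^T(C_1+D_{12}F_1)$ and is negative definite; dropping the nonnegative quadratic term gives $(A+B_2F_1)^TX + X(A+B_2F_1) \cl 0$, so the standing hypothesis that $A+B_2F_1$ is Hurwitz forces $X \cg 0$ by the Lyapunov inequality.

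The heart of the matter is the dual $U$-inequality. Since $X \cg 0$, I would first dualize \eqref{SHI::theo::eq::lmi_ofFb} by the standard primal--dual congruence for the bounded real inequality of Lemma \ref{SHI::lem::stab} (the same device that produces the two forms of Theorem \ref{SHI::theo::of}; cf.\ the appendix), obtaining $\Ls(\smat{0 & X^{-1} \\ X^{-1} & 0}, P_\ga^{-1}, \smat{I \\ -(G^F)^\ast}_\ss) \cg 0$. I would then compare the realizations $(-(G^F)^\ast)_\ss$ and $(-G_{11}^\ast)_\ss$ through the rank identity
\begin{equation*}
	\mat{cc}{-(A+B_2F_1)^T & -(C_1+D_{12}F_1)^T \\ -(B_1+B_2F_2)^T & -(D_{11}+D_{12}F_2)^T}
	= \mat{cc}{-A^T & -C_1^T \\ -B_1^T & -D_{11}^T} - \mat{c}{F_1^T \\ F_2^T}\mat{cc}{B_2^T & D_{12}^T}.
\end{equation*}
Hence the outer factors of the two dual $\Ls$-forms differ only in the two block rows built from the system matrices, and there only by a term of the form $(\ast)\mat{cc}{B_2^T & D_{12}^T}$. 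Because $U$ is a basis of $\ker(B_2^T, D_{12}^T)$, right multiplication by $U$ annihilates this difference, so the two outer factors agree on the range of $U$ and consequently
\begin{equation*}
	U^T\Ls\left(\mat{cc}{0 & X^{-1} \\ X^{-1} & 0}, P_\ga^{-1}, \mat{c}{I \\ -G_{11}^\ast}_\ss\right)U
	= U^T\Ls\left(\mat{cc}{0 & X^{-1} \\ X^{-1} & 0}, P_\ga^{-1}, \mat{c}{I \\ -(G^F)^\ast}_\ss\right)U \cg 0,
\end{equation*}
the positivity being inherited from the dualized \eqref{SHI::theo::eq::lmi_ofFb} because $U$ has full column rank. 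This is exactly the $U$-inequality of Theorem \ref{SHI::theo::of}, completing the reduction.

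I expect the main obstacle to be the bookkeeping of the dual step: writing the outer factor of the dual $\Ls$-form explicitly, confirming that the full-information gain $F$ enters only through the block rows premultiplied by $\mat{cc}{B_2^T & D_{12}^T}$, and checking that the dualization reproduces precisely the $-G_{11}^\ast$ and $P_\ga^{-1}$ of Theorem \ref{SHI::theo::of} rather than a transposed or scaled variant. The conceptual reason it works --- and why the full-information design is the natural dual companion to the primal $V$-inequality --- is that closing the loop with $u = F\t y$ perturbs the plant only along the column space of $\smat{B_2 \\ D_{12}}$, which is exactly the direction removed by the kernel basis $U$.
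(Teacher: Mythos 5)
Your proposal is correct and follows the paper's proof in all essentials: both arguments extract $X \cg 0$ from the left-upper Lyapunov block of \eqref{SHI::theo::eq::lmi_ofFb}, convert \eqref{SHI::theo::eq::lmi_ofFb} into the third (dual) inequality of Theorem \ref{SHI::theo::of}, and then invoke Theorem \ref{SHI::theo::of} with that same certificate $X$ to construct the controller, the $\ga$-bounds following exactly as you state. The only difference is packaging: where the paper removes $F$ by citing the elimination lemma \ref{RS::lem::elimination}, you re-derive that step by hand --- dualization via Lemma \ref{RS::lem::dualization} followed by annihilation of the $(\bullet)\,(B_2^T,\,D_{12}^T)$ perturbation by the kernel basis $U$ --- which is precisely how the appendix proves the ``only if'' direction of the elimination lemma, so the two arguments coincide.
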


	\begin{proof}
		The left upper block of \eqref{SHI::theo::eq::lmi_ofFb} reads as the Lyapunov inequality
		\begin{equation*}
			(A + B_2F_1)^TX + X(A + B_2F_1) + (C_1 + D_{12}F_1)^T(C_1 + D_{12}F_1) \cl 0.
		\end{equation*}
		Hence stability of $A + B_2F_1$ implies $X \cg 0$. This enables us to apply the elimination lemma \ref{RS::lem::elimination} in order to remove the full-information controller gain $F$ from the LMI \eqref{SHI::theo::eq::lmi_ofFb}, which yields exactly the third of the inequalities in Theorem \ref{SHI::theo::of}. Combined with $X \cg 0$ and \eqref{SHI::theo::eq::lmi_ofFa}, this allows us to construct the desired static controller via Theorem \ref{SHI::theo::of}. 
	\end{proof}

	Observe that $A + B_2 F_1$ is stable by construction if the gain $F$ is designed based on Lemma \ref{SHI::lem::full_info}, and that \eqref{SHI::theo::eq::lmi_ofFb} exactly is the analysis LMI \eqref{SHI::lem::lmi_stab} for the interconnection \eqref{SHI::eq::clF}.  Further, note that we even have $\ga_\opt = \ga_F$ if we view the gain $F$ as a decision variable in \eqref{SHI::theo::eq::lmi_ofF}. However, this would render the computation of $\ga_F$ as troublesome as that of $\ga_\opt$ itself.

	Intuitively, Theorem \ref{SHI::theo::ofF} links the difficult static output-feedback and the manageable full-information design problem with a common quantity (here, the Lyapunov matrix $X$). 
	This underlying idea is also employed in order to deal with many other non-convex and/or difficult problems such as the ones considered in \cite{EbiPea15, ArzPea03, HenSeb03}. 
	In fact, one can even show that there exist some gain $F$ and a matrix $X \cg 0$ satisfying \eqref{SHI::theo::eq::lmi_ofF} if and only if there exist matrices $X, F, N = (N_1, N_2)$ satisfying
	\begin{equation}
		\label{SHI::eq::svar_lmi}
		\arraycolsep=4pt
		X \cg 0
		\teq{ and }
		\Ls\left(\mat{cc}{0 & X \\ X & 0}, P_\ga, \mat{ccc}{A & B_1 & B_2 \\ \hline C_1 & D_{11} & D_{12} \\ 0 & I & 0} \right) + \mathrm{He}\left(\mat{c}{F^T \\ -I}\mat{ccc}{N_1 C_2 & N_1 D_{21} & -N_2} \right) \cl 0.
	\end{equation}
	The latter inequalities form the basis of the approach in \cite{EbiPea15} for static output-feedback $H_\infty$-design.
	
	\vspace{1ex}
	
	While Theorem \ref{SHI::theo::ofF} is interesting on its own, the key idea of the dual iteration is that improved upper bounds on $\ga_\opt$ are obtained by 
	also considering a problem that is dual to full-information synthesis.
	This consists of finding a full-actuation gain $E = (E_1^T, E_2^T)^T$ such that the analysis LMIs \eqref{SHI::lem::lmi_stab} are feasible for the system
	\begin{equation}
		\mat{c}{\dot x(t)  \\ e(t)}
		= \mat{cc}{A + E_1 C_2 & B_1 + E_1D_{21} \\
			C_1 + E_2C_2 & D_{11} + E_2D_{21} }
		\mat{c}{ x(t)  \\ d(t)}
		= \left(\mat{cc}{A & B_1  \\
			C_1 & D_{11} }
		+ E \mat{cc}{C_2 & D_{21}} 
		\right)
		\mat{c}{ x(t)  \\ d(t)}.
		\label{SHI::eq::clE}
	\end{equation}
	As before, a convex solution in terms of LMIs is immediately obtained by the elimination lemma \ref{RS::lem::elimination} and reads as follows.

	\begin{lemma}
		\label{SHI::lem::full_actu}
		There exists some full-actuation gain $E$ such that the analysis LMIs \eqref{SHI::lem::lmi_stab} are feasible for the system \eqref{SHI::eq::clE} if and only if there exists a symmetric matrix $X \cg 0$ satisfying \eqref{SHI::theo::eq::lmi_gsb}.
	\end{lemma}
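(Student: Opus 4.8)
The plan is to read off the claim as the full-actuation mirror image of the full-information Lemma~\ref{SHI::lem::full_info}, obtained by the very same application of the elimination lemma \ref{RS::lem::elimination} that underlies Theorem~\ref{SHI::theo::of}. Concretely, the analysis inequalities \eqref{SHI::lem::lmi_stab} for the closed loop \eqref{SHI::eq::clE} read $X \cg 0$ together with $\Ls(\smat{0 & X \\ X & 0}, P_\ga, \smat{G^E \\ I}_{\ss}) \cl 0$, where $G^E$ denotes the transfer matrix of \eqref{SHI::eq::clE}. Using $\smat{\Ac & \Bc \\ \Cc & \Dc} = \smat{A & B_1 \\ C_1 & D_{11}} + E(C_2, D_{21})$, the gain $E$ enters the closed-loop data with the identity as left factor and $(C_2, D_{21})$ as right factor, so after forming the quadratic form the matrix inequality depends on $E$. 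The plan is to eliminate $E$ from this inequality and to identify which of the resulting conditions survives.

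I would carry out the elimination exactly as in Theorem~\ref{SHI::theo::of}, noting that the full-actuation structure of \eqref{SHI::eq::clE} is the structure treated there with the left factor $\smat{B_2 \\ D_{12}}$ replaced by the identity $I_{n+n_e}$ and with $K$ renamed $E$. In Theorem~\ref{SHI::theo::of} the elimination produces two inequalities: the $V$-inequality \eqref{SHI::theo::eq::lmi_gsb}, tied to the right factor through $V = \ker(C_2, D_{21})$, and the inequality tied to the left factor through $U = \ker(B_2^T, D_{12}^T)$ (the dualized condition, cf.\ \eqref{SHI::theo::eq::lmi_gsc}). With $\smat{B_2 \\ D_{12}} = I$ one has $U = \ker(I) = \{0\}$, so the second inequality becomes vacuous and only \eqref{SHI::theo::eq::lmi_gsb} remains; the constraint $X \cg 0$ is untouched by the elimination. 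This yields the ``only if'' direction, and since the elimination lemma \ref{RS::lem::elimination} is constructive, the converse follows by reading off an admissible gain $E$ from a feasible $X$.

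This is the precise transpose-dual of Lemma~\ref{SHI::lem::full_info}: the full-actuation problem \eqref{SHI::eq::clE} interchanges the roles of the left and right factors relative to the full-information problem \eqref{SHI::eq::clF}, hence interchanges $U$ with $V$ and the primal with the dualized data, so that \eqref{SHI::theo::eq::lmi_gsc} survives in the full-information case while \eqref{SHI::theo::eq::lmi_gsb} survives here. I expect the delicate point to be the bookkeeping of the left-factor condition rather than its final triviality: in the full KYP outer factor the gain sits only in the middle rows, so the relevant left co-kernel is a priori large, and it is the structure of the $\Ls(\cdot,\cdot,\cdot)$ form together with $X \cg 0$ that collapses it to $\ker(B_2^T, D_{12}^T)$, which full actuation then renders empty. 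Relatedly, one must remember that the inequality is genuinely quadratic in $E$, so the affine (Finsler-type) form of the elimination statement is insufficient; the general version \ref{RS::lem::elimination} must be invoked, and its nonsingularity/inertia hypotheses checked, which hold here because $P_\ga$ is invertible and $X \cg 0$.
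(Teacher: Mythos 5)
Your proof is correct and takes exactly the paper's route: the paper states this lemma as an immediate consequence of the elimination lemma \ref{RS::lem::elimination} applied to the closed-loop analysis LMIs for \eqref{SHI::eq::clE}, and your argument spells that out correctly --- with left factor $I$ the dual ($U$-side) condition is vacuous, so only $X \cg 0$ and \eqref{SHI::theo::eq::lmi_gsb} survive, and the converse follows from the iff/constructive nature of Lemma \ref{RS::lem::elimination}. One minor imprecision in your closing remarks: the identification of the left annihilator with $\ker(B_2^T, D_{12}^T)$ (here $\ker(I) = \{0\}$) comes purely from the row permutation that isolates the constant blocks $(I,0)$ and $(0,I)$ of the outer factor, not from $X \cg 0$; positivity (indeed, nonsingularity) of $X$ is what secures the inertia hypothesis of Lemma \ref{RS::lem::elimination}, as you correctly note elsewhere.
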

	
	Based on a designed full-actuation gain $E$ we can formulate another set of LMI conditions that are sufficient for static output-feedback design. The proof is analogous to the one of Theorem \ref{SHI::theo::ofF} and can hence be omitted.
	
	\begin{theorem}
		\label{SHI::theo::ofE}
		Let $G_{11}$ and $U$ be as in Theorem \ref{SHI::theo::of} and let $G^E_{11}$ be the transfer matrix corresponding to \eqref{SHI::eq::clE}. Further, suppose that $A + E_1C_2$ is stable. Then there exists a static controller \eqref{SHI::eq::con_of2} for the system \eqref{SHI::eq::sys_of} such that the analysis LMIs \eqref{SHI::lem::lmi_stab} are feasible for the corresponding closed-loop system if there exists a symmetric matrix $Y$ satisfying 
		\begin{subequations}
			\label{SHI::theo::eq::lmi_ofE}
			\begin{equation}
				\arraycolsep=3pt
				\Ls\left(\mat{cc}{0 & Y \\ Y & 0},  P_\ga^{-1}, \mat{c}{I  \\ -(G^E_{11})^\ast 
				}_{\ss} \right) \cg 0
				\teq{ and }
				U^T\Ls\left(\mat{cc}{0 & Y \\ Y & 0},  P_\ga^{-1}, \mat{c}{I  \\ -G_{11}^\ast 
				}_{\ss} \right) U \cg 0.
				\dlabel{SHI::theo::eq::lmi_ofEa}{SHI::theo::eq::lmi_ofEb}
			\end{equation}
		\end{subequations}
		Moreover, we have
		$\ga_\mathrm{dof} \leq \ga_\opt \leq \ga_E$
		for $\ga_E$ being the infimal $\ga > 0$ such that the LMIs \eqref{SHI::theo::eq::lmi_ofE} are feasible.
	\end{theorem}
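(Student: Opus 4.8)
The plan is to mirror the proof of Theorem~\ref{SHI::theo::ofF} line by line, exploiting the duality between the two statements: the full-information gain $F$ is replaced by the full-actuation gain $E$, the primal closed-loop analysis LMI by its dual counterpart, the annihilator $V$ by $U$ (and conversely in the elimination step), and the Lyapunov matrix $X$ by $Y = X^{-1}$. Concretely, I would fix the identification $X := Y^{-1}$ from the outset and verify the three hypotheses of Theorem~\ref{SHI::theo::of} for this $X$; Theorem~\ref{SHI::theo::of} then delivers the desired static controller \eqref{SHI::eq::con_of2}.

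First I would inspect the top-left block of \eqref{SHI::theo::eq::lmi_ofEa}. Writing $A_E = A + E_1C_2$ and $B_E = B_1 + E_1D_{21}$ for the state-space data of $G^E$ and expanding the abbreviation $\Ls(\cdot,\cdot,\cdot)$, a short computation shows that this block equals $-(A_EY + YA_E^T + \ga^{-2}B_EB_E^T)$. Since \eqref{SHI::theo::eq::lmi_ofEa} requires $\Ls(\cdots)\cg 0$ and every principal submatrix of a positive definite matrix is positive definite, this yields the dual Lyapunov inequality $A_EY + YA_E^T + \ga^{-2}B_EB_E^T \cl 0$. The assumed stability of $A_E = A + E_1C_2$ then forces $Y \cg 0$, hence $X = Y^{-1}\cg 0$, which is precisely the first inequality of Theorem~\ref{SHI::theo::of}.

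Next, just as the full analysis LMI \eqref{SHI::theo::eq::lmi_ofFb} is exploited in the proof of Theorem~\ref{SHI::theo::ofF}, I would apply the elimination lemma~\ref{RS::lem::elimination} to remove the gain $E$ from \eqref{SHI::theo::eq::lmi_ofEa}. As $E$ enters the closed loop \eqref{SHI::eq::clE} only through the term $E\,(C_2, D_{21})$ acting on the measurement channel, the pertinent annihilator is a basis of $\ker(C_2, D_{21})$, that is $V$; eliminating $E$ from this dual-form LMI produces -- after the same primal/dual form flip that in Theorem~\ref{SHI::theo::ofF} converted a primal LMI into the dual $U$-condition -- exactly the second inequality of Theorem~\ref{SHI::theo::of} (the $V$-condition), now read with $X = Y^{-1}$. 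Observing in addition that \eqref{SHI::theo::eq::lmi_ofEb} is verbatim the third inequality of Theorem~\ref{SHI::theo::of} (the $U$-condition) under the identification $X^{-1} = Y$, all three hypotheses of that theorem hold for $X = Y^{-1}$, and a static controller rendering the closed-loop analysis LMIs \eqref{SHI::lem::lmi_stab} feasible exists.

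Finally, for the norm estimate I would note that the construction is uniform in $\ga$: any pair $(\ga, Y)$ feasible for \eqref{SHI::theo::eq::lmi_ofE} produces a static output-feedback controller whose closed-loop energy gain is smaller than $\ga$, so passing to the infimum over feasible $\ga$ gives $\ga_\opt \le \ga_E$, which together with $\ga_\mathrm{dof} \le \ga_\opt$ from Theorem~\ref{SHI::theo::gs} is the asserted chain. The step requiring genuine care -- and the main obstacle -- is the elimination: one must confirm that discarding $E$ from the \emph{dual} analysis LMI \eqref{SHI::theo::eq::lmi_ofEa} reproduces the \emph{primal}-form $V$-inequality with $X$ replaced by $Y^{-1}$ (and not some inequivalent projection), which relies on the precise primal/dual pairing of the two projected conditions furnished by the elimination lemma and on the sign bookkeeping that turns $\cg 0$ into $\cl 0$ and $P_\ga^{-1}$ into $P_\ga$.
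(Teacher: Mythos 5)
Your proposal is correct and is precisely the argument the paper intends: the paper omits this proof as ``analogous to the one of Theorem~\ref{SHI::theo::ofF}'', and you carry out exactly that dualized analogue --- extracting $Y \cg 0$ from the $(1,1)$-block dual Lyapunov inequality of \eqref{SHI::theo::eq::lmi_ofEa} together with stability of $A+E_1C_2$, eliminating $E$ via Lemma~\ref{RS::lem::elimination} with the annihilator $V$ of $\ker(C_2,D_{21})$ to recover the $V$-inequality of Theorem~\ref{SHI::theo::of} with $X=Y^{-1}$, and noting that \eqref{SHI::theo::eq::lmi_ofEb} is the $U$-inequality verbatim. Your sign and block computations (including $-(A_EY+YA_E^T+\ga^{-2}B_EB_E^T)$ for the top-left block) check out, and establishing $Y\cg 0$ \emph{before} elimination is indeed necessary, since the elimination lemma requires the nonsingularity and inertia of the middle matrix.
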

	
	In the sequel we refer to the LMIs \eqref{SHI::theo::eq::lmi_ofF} and \eqref{SHI::theo::eq::lmi_ofE} as primal and dual synthesis LMIs, respectively. 
	Accordingly, we address Theorems \ref{SHI::theo::ofF} and \ref{SHI::theo::ofE} as primal and dual design results, respectively.
	Observe that the latter are nicely intertwined as follows.
	
	\begin{theorem}
		\label{SHI::theo::it_summary}
		The following two statements hold.
		\begin{itemize}
			\item If $A + B_2F_1$ is stable and the primal synthesis LMIs \eqref{SHI::theo::eq::lmi_ofF} are satisfied for some $\ga$, some matrix $X$ and some full-information gain $F=(F_1, F_2)$, then there exists some full-actuation gain $E = (E_1^T, E_2^T)^T$ such that $A+E_1C_2$ is stable and the dual synthesis LMIs \eqref{SHI::theo::eq::lmi_ofE} are satisfied for the same $\ga$ and for $Y = X^{-1}$. In particular, we have $\ga_E < \ga$.
			\item If $A + E_1C_2$ is stable and the dual synthesis LMIs \eqref{SHI::theo::eq::lmi_ofE} are satisfied for some $\ga$, some matrix $Y$ and some full-actuation gain $E = (E_1^T, E_2^T)^T$, then there exists some full-information gain $F = (F_1, F_2)$ such that $A + B_2F_1$ is stable and the primal synthesis LMIs \eqref{SHI::theo::eq::lmi_ofF} are satisfied for the same $\ga$ and for $X = Y^{-1}$. In particular, we have $\ga_F < \ga$.
		\end{itemize}
	\end{theorem}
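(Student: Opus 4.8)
The plan is to treat both items as consequences of the single exact characterization in Theorem~\ref{SHI::theo::of}: the primal and dual synthesis LMIs are just two sufficient relaxations of it, glued together by a certificate and its inverse $Y=X^{-1}$. The two items are mirror images, so I would establish the first in full and obtain the second by interchanging the roles of $F$ and $E$, of $X$ and $Y$, and of the annihilators $U$ and $V$.

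For the first item I would begin exactly as in the proof of Theorem~\ref{SHI::theo::ofF}. The upper-left block of \eqref{SHI::theo::eq::lmi_ofFb} is a Lyapunov inequality for $A+B_2F_1$, so stability of $A+B_2F_1$ forces $X\cg 0$. Eliminating the full-information gain $F$ from \eqref{SHI::theo::eq::lmi_ofFb} by Lemma~\ref{RS::lem::elimination} (along the annihilator $U$ of $\smat{B_2^T & D_{12}^T}$) yields, with the inverse certificate, the inequality $U^T\Ls(\smat{0 & X^{-1} \\ X^{-1} & 0},P_\ga^{-1},\smat{I \\ -G_{11}^\ast}_\ss)U\cg 0$. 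Setting $Y:=X^{-1}\cg 0$, this is precisely the second dual synthesis LMI \eqref{SHI::theo::eq::lmi_ofEb}, so half of the dual conditions already holds for this $Y$.

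It remains to manufacture a full-actuation gain $E$ with $A+E_1C_2$ stable satisfying the first dual LMI \eqref{SHI::theo::eq::lmi_ofEa} for the same $Y=X^{-1}$. Here I would use that the first primal LMI \eqref{SHI::theo::eq::lmi_ofFa} together with $X\cg 0$ is verbatim condition \eqref{SHI::theo::eq::lmi_gsb}, now read for the fixed matrix $X$. By the fixed-certificate elimination underlying Lemma~\ref{SHI::lem::full_actu} (elimination of the row-acting gain $E$ along the annihilator $V$ of $\smat{C_2 & D_{21}}$), this is equivalent to the solvability in $E$ of the analysis inequality $\Ls(\smat{0 & X \\ X & 0},P_\ga,\smat{G^E \\ I}_\ss)\cl 0$ for the full-actuation closed loop \eqref{SHI::eq::clE}, with the very same $X$ as certificate. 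Since $X\cg 0$, the upper-left Lyapunov block of this inequality makes $A+E_1C_2$ Hurwitz, so the required stability comes for free. Finally, dualizing this bounded-real inequality by inverting the certificate and passing to $-(G^E)^\ast$ turns it into $\Ls(\smat{0 & Y \\ Y & 0},P_\ga^{-1},\smat{I \\ -(G^E)^\ast}_\ss)\cg 0$ with $Y=X^{-1}$, which is exactly \eqref{SHI::theo::eq::lmi_ofEa}. Hence both dual LMIs hold at the same $\ga$ with the common certificate $Y=X^{-1}$, whence $\ga_E\leq\ga$.

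I expect the only delicate point to be the certificate bookkeeping: I must invoke the elimination lemma in its fixed-$X$ form, rather than the existential statement of Lemma~\ref{SHI::lem::full_actu}, so that the constructed $E$ inherits exactly the certificate $X$, and then invert consistently so that the single matrix $Y=X^{-1}$ serves both \eqref{SHI::theo::eq::lmi_ofEa} and \eqref{SHI::theo::eq::lmi_ofEb}; keeping the primal form paired with $P_\ga$ and the dual form with $P_\ga^{-1}$ through the dualization is where inverse and sign slips would most easily occur. The second item is the mirror argument: starting from feasibility of \eqref{SHI::theo::eq::lmi_ofE} with $E$ and $A+E_1C_2$ stable, the dual Lyapunov block of \eqref{SHI::theo::eq::lmi_ofEa} forces $Y\cg 0$; eliminating the row-acting gain $E$ from \eqref{SHI::theo::eq::lmi_ofEa} (after dualization) reproduces \eqref{SHI::theo::eq::lmi_ofFa} with $X=Y^{-1}$, while the fixed-certificate form of Lemma~\ref{SHI::lem::full_info} supplies a full-information gain $F$ realizing \eqref{SHI::theo::eq::lmi_ofFb} with the same $X=Y^{-1}$, its upper-left Lyapunov block again delivering stability of $A+B_2F_1$; consequently $\ga_F\leq\ga$.
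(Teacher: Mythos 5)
Your proposal is correct and follows essentially the same route as the paper's proof: infer $X\cg 0$ from the Lyapunov block of \eqref{SHI::theo::eq::lmi_ofFb}, use the fixed-certificate (elimination-based) form of Lemma \ref{SHI::lem::full_actu} together with \eqref{SHI::theo::eq::lmi_ofFa} to build a full-actuation gain $E$ sharing the same $X$ (whose Lyapunov block gives stability of $A+E_1C_2$), dualize to obtain \eqref{SHI::theo::eq::lmi_ofEa} with $Y=X^{-1}$, and eliminate $F$ from \eqref{SHI::theo::eq::lmi_ofFb} to obtain \eqref{SHI::theo::eq::lmi_ofEb}, with the second item by symmetry. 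The only difference is the order of the steps, which is immaterial.
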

	
	\begin{proof}
		We only show the first statement as the second one follows with analogous arguments. If $A + B_2F_1$ is stable and the primal synthesis LMIs \eqref{SHI::theo::eq::lmi_ofF} are feasible, we can infer $X \cg 0$ from \eqref{SHI::theo::eq::lmi_ofFb} as in Theorem \ref{SHI::theo::ofF}. Due to \eqref{SHI::theo::eq::lmi_ofFa} and Lemma \ref{SHI::lem::full_actu}, we can then conclude the existence of a full-actuation gain $E$ satisfying
		\begin{equation*}
			\Ls\left(\mat{cc}{0 & X \\ X & 0},  P_\ga, \mat{c}{G^E_{11}  \\ I}_\ss \right) \cl 0
		\end{equation*}
		with exactly the same Lyapunov matrix $X\cg 0$. In particular, the left upper block of the above LMI is a standard Lyapunov inequality which implies that $A + E_1C_2$ is stable. Moreover, an application of the dualization lemma \ref{RS::lem::dualization} as given in the appendix allows us to infer that \eqref{SHI::theo::eq::lmi_ofEa} is satisfied for $Y = X^{-1} \cg 0$. Finally, by using the elimination lemma \ref{RS::lem::elimination} on the LMI \eqref{SHI::theo::eq::lmi_ofFb} to remove the full-information gain $F$, we conclude that \eqref{SHI::theo::eq::lmi_ofEb} is satisfied as well. This finishes the proof.
	\end{proof}

	The dual iteration now essentially amounts to alternately applying the two statements in Theorem~\ref{SHI::theo::it_summary} and is stated as follows.

	\begin{Algorithm}
		\label{SHI::algo::dual_iteration}
		Dual iteration for static output-feedback $H_\infty$-design.
		\begin{enumerate}
			\item \emph{Initialization:} Compute the lower bound $\ga_\mathrm{dof}$ based on solving the dynamic synthesis LMIs \eqref{SHI::theo::eq::lmi_gs} and set $\ga^0 := +\infty$ as well as $k = 1$.
			Design an initial full-information gain $F$ from Lemma \ref{SHI::lem::full_info}.
			\item \emph{Primal step:} Compute $\ga_F$ by solving the primal synthesis LMIs \eqref{SHI::theo::eq::lmi_ofF} for the given gain $F$ and choose some small $\eps_k>0$ such that $\ga^k := \ga_F(1+\eps_k) < \ga^{k-1}$. For $\ga = \ga^k$, determine a matrix $X$ satisfying the LMIs \eqref{SHI::theo::eq::lmi_ofF} and apply the elimination lemma \ref{RS::lem::elimination} on \eqref{SHI::theo::eq::lmi_ofFa} in order to construct a full-actuation gain $E$ satisfying the dual synthesis LMIs \eqref{SHI::theo::eq::lmi_ofE} for $Y = X^{-1}$. 
			\item \emph{Dual step:} Compute $\ga_E$ by solving the dual synthesis LMIs \eqref{SHI::theo::eq::lmi_ofE} for the given gain $E$ and choose some small $\eps_{k+1}> 0$ such that $\ga^{k+1} := \ga_E (1+\eps_{k+1}) < \ga^k$. For $\ga = \ga^{k+1}$, determine a matrix $Y$ satisfying the LMIs \eqref{SHI::theo::eq::lmi_ofE} and apply the elimination lemma \ref{RS::lem::elimination} on \eqref{SHI::theo::eq::lmi_ofEa} in order to construct a full-information gain $F$ satisfying the primal synthesis LMIs  \eqref{SHI::theo::eq::lmi_ofF} for $X = Y^{-1}$.
			\item \emph{Termination:} If $k$ is too large or $\ga^k$ does not decrease any more, then stop and construct a static output-feedback controller according to Theorem \ref{SHI::theo::ofE}.\\
			Otherwise set $k = k+2$ and go to the primal step. 
		\end{enumerate}
	\end{Algorithm}

	\begin{remark}
		~
		\label{SHI::rema::stuff}
		\begin{enumerate}[(a)]
			\item Theorem \ref{SHI::theo::it_summary} ensures that Algorithm \ref{SHI::algo::dual_iteration} is recursively feasible, i.e., it will not get stuck due to infeasibility of some LMI, if the primal synthesis LMIs \eqref{SHI::theo::eq::lmi_ofF} are feasible when performing the primal step for the first time. Additionally, the proof of Theorem \ref{SHI::theo::it_summary} demonstrates that we can even warm start the feasibility problems in the primal and dual steps by providing a feasible initial guess for the involved variables. This reduces the computational burden remarkably. 
			%
			%
			
			\item The small numbers $\eps_k> 0$ are introduced since, in general, it is not possible to determine optimal controllers or gains (because these might not even exist); this is the reason for working with close-to-optimal solutions instead.
			
			\item We have
			$\ga_\mathrm{dof} \leq \ga_\opt \leq \ga^k < \dots < \ga^2 < \ga^1$
			for all $k\in \N$
			and thus the sequence $(\ga^k)_{k\in \N}$ converges to some value $\ga^\ast \geq \ga_\opt$. 
			As for other approaches, there is no guarantee that $\ga^\ast = \ga_\opt$. 
			Nevertheless, the number of required iterations to obtain acceptable bounds on the optimal energy gain is rather low as will be demonstrated. 
			\item As for any heuristic design, it can be beneficial to perform an a posteriori closed-loop analysis via Lemma \ref{SHI::lem::stab}. The resulting closed-loop energy gain is guaranteed to be not larger than the corresponding computed upper bound $\ga^k$.
			%
			%
			\item If a static controller $K$ is available that achieves a closed-loop energy gain bounded by $\ga$, then the dual iteration can be initialized with $F = (KC_2, KD_{21})$. In particular, the primal synthesis LMIs \eqref{SHI::theo::eq::lmi_ofF} are then feasible and we have $\ga_F \leq \ga$.
			
			
			%
			%
			\item If one is only interested in stability as in the original publication \cite{Iwa97, Iwa99}, one should replace the analysis LMIs \eqref{SHI::lem::lmi_stab} with $X \cg 0$ and $A^TX + XA \cl \ga X$
			and adapt the design results accordingly while still minimizing $\ga$. 
			In this case, we note that the emergence of terms like $\ga X$ requires the solution of generalized eigenvalue problems. These can be efficiently solved as well, e.g., with Matlab and LMIlab \cite{GahNem95}.
		\end{enumerate}
	\end{remark}

	\begin{remark}
		\label{SHI::rema::init}
		The selection of a suitable gain $F$ during the initialization of Algorithm \ref{SHI::algo::dual_iteration} can be crucial, since feasibility of the primal synthesis LMIs \eqref{SHI::theo::eq::lmi_ofF} is \emph{not} guaranteed from the  feasibility of dynamic synthesis LMIs \eqref{SHI::theo::eq::lmi_gs} and depends on the concrete choice of the gain $F$. 
		Similarly as in \cite{Iwa99}, we propose to compute the lower bound $\ga_\mathrm{dof}$ and then to reconsider the LMIs \eqref{SHI::theo::eq::lmi_gs} for $\ga = (1+\eps) \ga_\mathrm{dof}$ and some fixed $\eps > 0$ while minimizing $\tr(X + Y)$.  Due to \eqref{SHI::theo::eq::lmi_gsa}, this is a common heuristic that aims to push $X$ towards $Y^{-1}$ and which promotes feasibility of the non-convex design matrix inequalities in Theorem \ref{SHI::theo::of}. Constructing a gain $F$ based on Lemma \ref{SHI::lem::full_info} and these modified LMIs promotes feasibility of the primal synthesis LMIs \eqref{SHI::theo::eq::lmi_ofF} as well.
	\end{remark}

	\subsection{Examples}\label{SHI::sec::exa}

	In order to illustrate the dual iteration from \cite{Iwa97, Iwa99} and as described above, we consider several examples from COMPl\textsubscript{e}ib \cite{Lei04} and compare the iteration to the following common and recent alternative static output-feedback $H_\infty$-design approaches:
	\begin{itemize}
		\item A D-K iteration scheme (also termed V-K iteration, e.g., in \cite{Boy94, GhaBal94}) that relies on minimizing $\ga$ subject to the analysis LMIs \eqref{SHI::lem::lmi_stab} for the closed-loop system \eqref{SHI::eq::cl_of} (with decision variables $\ga, X, K$) while alternately fixing $K$ and $X$. We emphasize that this approach \emph{requires} an initialization with a stabilizing static controller, because the first considered LMI is infeasible otherwise. In this paper, we utilize the static controller as obtained from computing $\ga^1$ for the initialization.
		\item The approach presented in Section 6.3 of \cite{EbiPea15}, which makes use of so-called ``S-Variables'', will be referred to as SVar iteration. This approach is based on minimizing $\ga$ subject to the LMIs \eqref{SHI::eq::svar_lmi} (with decision variables $\ga, X, N, F$) while alternately fixing $F$ and $N$. We initialize this algorithm in the same way as the dual iteration and as stated in Remark \ref{SHI::rema::init}.
		\item The \texttt{hinfstruct} algorithm from \cite{ApkNol06} available in Matlab using default options.
		\item \texttt{hifoo 3.5} with \texttt{hanso 2.01} from \cite{BurHen06} using default options.
	\end{itemize}
	We denote the resulting upper bounds on $\ga_\opt$ as $\ga_\mathrm{dk}^k$, $\ga_\mathrm{svar}^k$, $\ga_{\mathrm{his}}$ and $\ga_{\mathrm{hfo}}$, respectively; the superscript $k$ indicates that the algorithm was stopped after $k$ iterations. All computations are carried out with Matlab on a general purpose desktop computer (Intel Core i7, 4.0 GHz, 8 GB of ram) and we use LMIlab \cite{GahNem95} for solving LMIs; 
	in our experience the latter solver is not the fastest, but the most reliable one that is available for LMI based controller design. The Matlab code for all example in this paper is available in \cite{HolSch21}.

	The numerical results are depicted in Table~\ref{SHI::tab::results_sof} and do not show dramatic differences between the dual iteration, \texttt{hinfstruct} and \texttt{hifoo} in terms of computed upper bounds for most of the examples. 
	However, the dual iteration clearly outperforms the D-K and the SVar iteration. Similarly as in the original publication \cite{Iwa99}, we observe that few iterations of the dual iteration are often sufficient to obtain good upper bounds on the optimal $\ga_\opt$, which is in contrast to the latter two algorithms. Obviously, all of the iterations can lead to potential improvements for more than the chosen nine iterations.
	Finally, note that all of the considered algorithms can fail to provide a stabilizing solution, which is due to the underlying non-convexity of the synthesis problem. 
	In this case or in order to potentially improve the obtained upper bounds, \texttt{hinfstruct} offers the possibility to restart with randomly chosen initial conditions. This is as well possible for the dual and the SVar iteration, 
	for example with a strategy as described in Section 6.5 of \cite{EbiPea15}. The algorithm behind \texttt{hifoo} is randomized by itself and can thus also profit from performing multiple runs. Clearly, all of these restart strategies come at the expense of additional computational time.

	The numbers $T_{\ga^9}$ $T_{\ga_{\mathrm{dk}}^9}$, $T_{\ga_{\mathrm{svar}}^9}$, $T_{\ga_{\mathrm{his}}}$ and $T_{\ga_{\mathrm{hfo}}}$ in Table~\ref{SHI::tab::results_sof} denote the average runtime for twenty runs in seconds required for the computation of $\ga^9$, $\ga_{\mathrm{dk}}^9$, $\ga_{\mathrm{svar}}^9$, $\ga_{\mathrm{his}}$ and $\ga_{\mathrm{hfo}}$, respectively.
	We observe that our implementation of the dual iteration is mostly slower than the D-K and the SVar iteration.  Moreover, it is faster than \texttt{hinfstruct} and \texttt{hifoo} for systems with a small number of states $n$, but does not scale well for systems with many states. 
	The latter is, of course, not surprising since the dual iteration is based on solving LMIs and thus inherits all related computational aspects. In contrast, \texttt{hinfstruct} and \texttt{hifoo} rely on a more specialized optimization techniques that avoid solving LMIs. 
	Note that the required computation time for the dual iteration can easily be improved by applying faster generic LMI solvers such as Mosek \cite{Mos17} or SeDuMi \cite{Stu01}. Instead of relying on generic solvers, it might even be possible to employ dedicated solvers such as \cite{WalHan04} which exploit the particular structure of the primal and dual synthesis LMIs \eqref{SHI::theo::eq::lmi_ofF} and \eqref{SHI::theo::eq::lmi_ofE}. However, exploring this potential for numerical improvements is beyond the scope of this paper. Finally, note that the initialization of the dual iteration is the most time-consuming part; the actual iteration is relatively fast in comparison, since fewer decision variables are involved.

	\vspace{1ex}
	
	The most important benefit of LMI based approaches (such as the dual iteration) over algorithms such as \texttt{hinfstruct} or \texttt{hifoo} is their potential for generalizations to deal with problems that are more interesting and relevant than 
	the mere design of static output-feedback controllers as considered in this section. 
	These problems include the design of (robust) controllers for systems involving delayed signals and uncertain or nonlinear components, the synthesis of static controllers for time-varying systems as well as the design of controllers for systems with continuous and discrete dynamics.
	In particular, we demonstrate in Section \ref{RS::sec::rs} that the dual iteration allows us to synthesize, within a common framework, robust and robust gain-scheduling controllers for systems affected by time-varying parametric uncertainties (and scheduling components).
	Finally, note again that a more detailed discussion and conceptual comparison of static design approaches involving many more algorithms is found in the recent survey \cite{SadPea16}.

	\begin{table}
		\newcommand{\f}[1]{\bf{#1}}
		\newcommand{\gr}[1]{\textcolor{gray}{#1}}
		\caption{Optimal closed-loop $H_\infty$-norms resulting from dynamic output-feedback design with upper bounds obtained via the dual iteration, a D-K iteration, the SVar iteration, \texttt{hinfstruct} and \texttt{hifoo} for several examples from \cite{Lei04}, together with the corresponding average running times for twenty runs in seconds. All values are rounded to two decimals.}%
		\label{SHI::tab::results_sof}%
		
		\begin{center}%
			\scalebox{0.925}{%
				\setlength{\tabcolsep}{5pt}%
				\renewcommand{\arraystretch}{1.2}%
				\begin{tabular}{@{}l@{\hskip 2ex}r@{\hskip 3.5ex}rrrr@{\hskip 3.5ex}rrr@{\hskip 3.5ex}rrr@{\hskip 3.5ex}rr@{\hskip 3.5ex}rr@{}}
					\toprule
					& &\multicolumn{4}{@{}c@{\hskip 3.5ex}}{Dual Iteration} & \multicolumn{3}{@{}c@{\hskip 3.5ex}}{D-K Iteration} & \multicolumn{3}{@{}c@{\hskip 3.5ex}}{SVar Iteration} & \multicolumn{2}{@{}c@{\hskip 3.5ex}}{\texttt{hinfstruct}} & \multicolumn{2}{@{}c}{\texttt{hifoo} }\\ \cmidrule(r{3.5ex}){3-6}\cmidrule(r{3.5ex}){7-9}\cmidrule(r{3.5ex}){10-12} \cmidrule(r{3.5ex}){13-14} \cmidrule(r){15-16}
					Name & $\ga_\mathrm{dof}$ & $\ga^1$ & $\ga^5$ & $\ga^9$ & $T_{\ga^9}$ & $\ga_\mathrm{dk}^5$ & $\ga_\mathrm{dk}^{9}$ & $T_{\ga_{\mathrm{dk}}^9}$ & $\ga_{\mathrm{svar}}^5$ & $\ga_{\mathrm{svar}}^9$ & $T_{\ga_{\mathrm{svar}}^9}$ & $\ga_\mathrm{his}$ & $T_{\ga_{\mathrm{his}}}$ &$\ga_{\mathrm{hfo}}$ & $T_{\ga_{\mathrm{hfo}}}$\\ \hline 
					AC3 & 2.97 & 4.53 & 3.67 & \f{3.47} & \gr{0.10} & 4.12 & 4.03 & \gr{0.08}  & 3.90 & 3.83 & \gr{0.09} & 3.64 & \gr{0.16} & 3.62 & \gr{3.03}\\
					AC18 & 5.38 & 14.62 & 10.74 & 10.72 & \gr{1.37} & 12.22 & 11.95 & \gr{1.28}  & 11.20 & 11.12 & \gr{1.32} & \f{10.70} & \gr{0.15} & 12.66 & \gr{4.03}\\
					HE2 & 2.42 & 5.28 & 4.26 & 4.25 & \gr{0.08} & 4.97 & 4.94 & \gr{0.06}  & 4.40 & 4.27 & \gr{0.06} & 4.25 & \gr{0.11} & \f{4.14} & \gr{0.89}\\
					HE4 & 22.84 & 32.34 & 23.02 & \f{22.84} & \gr{0.56} & 31.25 & 30.56 & \gr{0.53}  & 26.41 & 24.81 & \gr{0.6} & 23.57 & \gr{0.36} & 22.85 & \gr{33.13}\\
					JE1 & 3.85 & 20.40 & 12.42 & 11.70 & \gr{1.28k} & 19.15 & 18.62 & \gr{1.16k}  & 16.95 & 15.08 & \gr{1.12k} & \f{10.15} & \gr{1.72} & 23.52 & \gr{49.48}\\
					REA2 & 1.13 & 1.24 & 1.17 & 1.16 & \gr{0.07} & 1.21 & 1.20 & \gr{0.06}  & 1.17 & 1.16 & \gr{0.06} & \f{1.15} & \gr{0.14} & 1.16 & \gr{6.38}\\
					DIS1 & 4.16 & 5.12 & 4.26 & 4.26 & \gr{0.43} & 5.15 & 5.13 & \gr{0.30}  & 5.12 & 5.12 & \gr{0.59} & 4.19 & \gr{0.14} & \f{4.18} & \gr{8.66}\\
					WEC1 & 3.64 & 7.61 & 5.00 & 4.11 & \gr{1.98} & 7.36 & 7.32 & \gr{1.77}  & 7.32 & 7.31 & \gr{1.95} & \f{4.05} & \gr{0.24} & \f{4.05} & \gr{16.34}\\
					IH & 0.00 & 0.02 & \f{0.00} & \f{0.00} & \gr{57.33} & \f{0.00} & \f{0.00} & \gr{45.02}  & 0.01 & 0.01 & \gr{486.05} & 2.45 & \gr{2.73} & 1.97 & \gr{33.83}\\
					NN14 & 9.43 & 30.10 & 17.53 & 17.49 & \gr{0.15} & 23.29 & 19.90 & \gr{0.12}  & 18.87 & 18.64 & \gr{0.13} & \f{17.48} & \gr{0.16} & \f{17.48} & \gr{7.09}\\
					NN17 & 2.64 & - & - & - & \gr{-} & - & - & \gr{-}  & - & - & \gr{-} & \f{11.22} & \gr{0.05} & \f{11.22} & \gr{0.65}\\
					TDM & 2.12 & 3.16 & 2.70 & \f{2.50} & \gr{0.14} & 3.15 & 3.15 & \gr{0.10}  & 3.10 & 3.10 & \gr{0.11} & - & \gr{-} & 2.57 & \gr{7.13}\\
					DLR1 & 0.06 & 7.82 & 2.79 & 2.79 & \gr{1.14} & 3.10 & 3.02 & \gr{1.03}  & 3.79 & 3.79 & \gr{1.10} & \f{2.78} & \gr{0.07} & \f{2.78} & \gr{1.05}\\
					\bottomrule
			\end{tabular}}
		\end{center}
	\end{table}

	\subsection{A Control Theoretic Interpretation of the Dual Iteration}\label{SHI::sec::interpretation}

	So far the entire dual iteration solely relies on algebraic manipulations by heavily exploiting the elimination lemma \ref{RS::lem::elimination}. This turns an application of the iteration relatively simple but not very insightful. A control theoretic interpretation of the individual steps can be provided based on our robust output-feedback design approach proposed in \cite{HolSch19} that was motivated by the well-known separation principle.
	The classical separation principle states that one can synthesize a stabilizing dynamic output-feedback controller by combining a state observer with a state-feedback controller, which can be designed completely independently from each other. 
	Instead, we proposed in \cite{HolSch19} to design a full-information controller and thereafter to solve a particular robust design problem with a structure that resembles the one in robust estimation. The latter problem is briefly recalled next.
	
	\vspace{1ex}
	
	Suppose that we have synthesized a full-information controller $\t u = F\t y$ via Lemma \ref{SHI::lem::full_info}. Then we can incorporate the gain $F = (F_1, F_2)$ into the closed-loop interconnection \eqref{SHI::eq::cl_of} with the to-be-designed static controller $K$ and some parameter $\del \in [0, 1]$ as depicted on the left in Fig.~\ref{SHI::fig::of_homotop}; here, $\t G$ denotes 
	the open-loop system \eqref{SHI::eq::sys_of} augmented with the virtual measurements $\t y = \smat{x \\ d}$. 
	In this new configuration, we note that the control input $u$ satisfies
	\begin{equation*}
		u = (1 - \del)\t u + \del \h u,
	\end{equation*}
	i.e., it is a convex combination of the outputs of the full-information and of the to-be-designed static output-feedback controller. 
	In particular, for $\del = 0$, we retrieve \eqref{SHI::eq::clF}, the interconnection of the system \eqref{SHI::eq::sys_of} with the full-information controller $u = \t u =  F \t y$ for the output $\t y = \smat{x \\ d}$. On the other hand, for $\del = 1$, we recover the original interconnection \eqref{SHI::eq::cl_of}.
	This motivates to view $\del$ as a homotopy parameter that continuously deforms the prior interconnection into the latter.
	
	\begin{figure}
		\vspace{1ex}
		\begin{center}
			\begin{minipage}{0.49\textwidth}
				\begin{center}
					\includegraphics[]{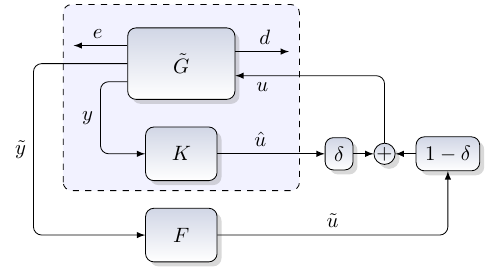}
				\end{center}
			\end{minipage}
			\begin{minipage}{0.49\textwidth}
				\begin{center}
					\includegraphics[]{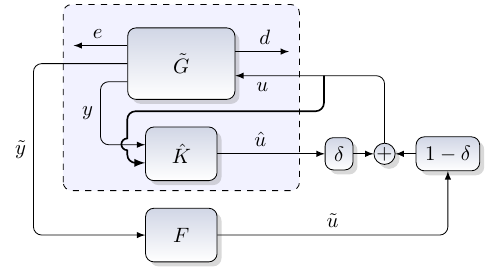}
				\end{center}
			\end{minipage}
		\end{center}
		\caption{Left: Incorporation of the full-information controller gain $F$ into the interconnection \eqref{SHI::eq::cl_of} with the to-be-designed static controller $K$ and some parameter $\del\in [0, 1]$. Right: Allowing the controller to take additional measurements of $u$.}
		\label{SHI::fig::of_homotop}
	\end{figure}
	
	As in \cite{HolSch19} we treat the parameter $\del$ as an uncertainty. A robust design of $K$ turns the achievable upper bounds on the closed-loop energy gain rather conservative. To counteract this conservatism, we allow the to-be-designed controller to additionally include measurements of the convex combination $u$ which results in the configuration on the right in Fig.~\ref{SHI::fig::of_homotop}. This is expected to be beneficial, since the controller knows its own output $\h u$ and, thus, it essentially means to measure the new uncertain signal as well. Note that restricting $\hat K$ to admit the structure $\hat K = (K, 0)$ results again in the configuration on the left in Fig.~\ref{SHI::fig::of_homotop}.
	

	Observe that the control input $u$ can also be expressed as
	\begin{equation*}
		u = (1-\del) \t u + \del \h u = \t u + \del \t z = F_1 x + F_2 d + \t w
		\teq{ with } \t z := \h u - \t u = \h u - F_1 x - F_2d
		\teq{ and } \t w := \del \t z.
	\end{equation*}
	Hence, disconnecting the controller $\hat K$ leads to the uncertain open-loop system
	\begin{equation}
		\mat{c}{\dot x(t) \\ \hline  e(t) \\ \hdashline \t z(t) \\\hdashline \h y(t)} 
		= \mat{c|c:c:c}{A^F & B_1^F & B_2 & 0 \\ \hline
			C_1^F & D_{11}^F & D_{12} & 0 \\ \hdashline
			C_2^F & D_{21}^F & 0 & I \\ \hdashline
			C_3^F & D_{31}^F & D_{32}^F & 0}
		\mat{c}{x(t) \\ \hline d(t) \\ \t w(t) \\ \h u(t)}
		=
		\mat{c|c:c:c}{A +B_2F_1 & B_1 +B_2 F_2 & B_2 & 0\\ \hline
			C_1+D_{12}F_1 & D_{11}+D_{12}F_2 & D_{12}  & 0\\ \hdashline
			-F_1 & -F_2 &  0 & I \\ \hdashline
			C_2 & D_{21} & 0 & 0 \\
			F_1 & F_2 &  I & 0}
		\mat{c}{x(t) \\\hline  d(t) \\ \hdashline \t w(t) \\ \hdashline \h u(t)},  
		\qquad
		\t w(t) = \del \t z(t)
		\label{SHI::eq::sys_es}
	\end{equation}
	for $t\geq 0$ and with the augmented measurements $\h y := \smat{y \\ u}$. Note that the structure of the system \eqref{SHI::eq::sys_es} is closely related to the one appearing in estimation problems as considered, e.g., in \cite{SunPac05, GerDeo01,SchKoe08, Ger99}. As the  essential point, we show that the problem of finding a robust static controller $\hat K$ for \eqref{SHI::eq::sys_es} can be turned convex.
	To this end, note that reconnecting the controller $\hat K$ leads to an uncertain closed-loop system with description
	\begin{equation}
		\mat{c}{\dot x(t) \\ \hline  e(t) \\  \t z(t) } =
		\mat{c|cc}{\Ac^F & \Bc_1^F & \Bc_2^F\\ \hline
			\Cc_1^F & \Dc_{11}^F & \Dc_{12}^F \\ 
			\Cc_2^F & \Dc_{21}^F & \Dc_{22}^F}
		\mat{c}{x(t) \\\hline  d(t) \\ \t w(t) }
		=\mat{c|cc}{A^F & B_1^F & B_2\\ \hline
			C_1^F & D_{11}^F & D_{12} \\ 
			C_2^F +\hat K C_3^F & D_{21}^F + \hat K D_{31}^F & \hat K D_{32}^F}
		\mat{c}{x(t) \\\hline  d(t) \\ \t w(t) },  
		\qquad
		\t w(t) = \del \t z(t).
		\label{SHI::eq::sys_escl}
	\end{equation}
	We analyze the latter system via static IQCs \cite{MegRan97}. Since
	the (uncertain) homotopy parameter $\del$ varies in $[0,1]$, we employ the set of constant multipliers
	\begin{equation*}
		\Pb := \left\{\mat{cc}{0 & H^T \\ H & -H - H^T} ~\middle|~ H + H^T \cg 0 \right\}
		\teq{ because }
		\mat{c}{I \\ \del I}^T P \mat{c}{I \\ \del I}
		=\del (1 - \del) (H + H^T) \cge 0
	\end{equation*}
	holds for all $\del \in [0, 1]$ and any multiplier $P \in \Pb$.
	This leads to the following robust analysis result, which can also be viewed as a special case of the findings in \cite{Sch01}.
	
	\begin{lemma}
		\label{SHI::lem::stabes}
		Let $\Gc_{ij}^F$ be the transfer matrix corresponding to the closed-loop system \eqref{SHI::eq::sys_escl}. Then the uncertain system \eqref{SHI::eq::sys_escl} is well-posed, i.e., $\det(I - \del \Dc_{22}^F) \neq 0$ for all $\del \in [0,1]$, and its energy gain is smaller than $\ga$ for all $\del \in [0, 1]$ if there exist symmetric matrices $X$ and $ P\in \Pb$ satisfying
		\begin{equation}
			X \cg 0
			\teq{ and }
			\Ls\left(\mat{cc}{0 & X \\ X & 0}, \mat{c|c}{P_\ga & 0 \\ \hline 0 & P}, \mat{cc}{\Gc_{11}^F & \Gc_{12}^F \\ I & 0 \\ \hline \Gc_{21}^F & \Gc_{22}^F \\ 0 & I}_\ss \right) \cl 0.
			\label{SHI::lem::lmi_stabes}
		\end{equation}
	\end{lemma}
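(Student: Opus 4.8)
The plan is to extract both assertions from the single strict inequality \eqref{SHI::lem::lmi_stabes}, splitting the work into an algebraic part that yields well-posedness and a dissipation part that yields robust stability together with the energy-gain bound; the defining property \eqref{SHI::eq::multi_minus_one} of the multipliers in $\Pb$ is what couples the two. First I would unfold \eqref{SHI::lem::lmi_stabes} by a congruence with $\col(x, d, \t w)$, inserting the state-space relations $\dot x = \Ac^F x + \Bc_1^F d + \Bc_2^F \t w$, $e = \Cc_1^F x + \Dc_{11}^F d + \Dc_{12}^F \t w$, $\t z = \Cc_2^F x + \Dc_{21}^F d + \Dc_{22}^F\t w$ of \eqref{SHI::eq::sys_escl}. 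Since the inequality is strict, there is an $\eps_0 > 0$ such that the resulting scalar identity obeys
\begin{equation*}
2 x^T X \dot x + \smat{e \\ d}^T P_\ga \smat{e \\ d} + \smat{\t z \\ \t w}^T P \smat{\t z \\ \t w} \leq -\eps_0\bigl(x^Tx + d^Td + \t w^T\t w\bigr)
\end{equation*}
for all $x, d, \t w$, with $\dot x, e, \t z$ abbreviating the right-hand sides above. Here $\smat{e \\ d}^T P_\ga \smat{e \\ d} = e^Te - \ga^2 d^Td$, and once the uncertainty channel is closed by $\t w = \del \t z$ we have $\smat{\t z \\ \t w} = \smat{I \\ \del I}\t z$, so by \eqref{SHI::eq::multi_minus_one} the multiplier term equals $\del(1-\del)\,\t z^T(H+H^T)\t z \geq 0$.

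For well-posedness I would set $x = 0$ and $d = 0$ in the displayed bound. Then $e = \Dc_{12}^F\t w$ and $\t z = \Dc_{22}^F\t w$, and discarding the nonnegative contribution $e^Te$ leaves
\begin{equation*}
\smat{\Dc_{22}^F \t w \\ \t w}^T P \smat{\Dc_{22}^F \t w \\ \t w} \leq -\eps_0\, \t w^T\t w \quad\text{for all }\t w.
\end{equation*}
Suppose now $\det(I - \del\Dc_{22}^F) = 0$ for some $\del \in (0,1]$ and pick $\t z_0 \neq 0$ with $\t z_0 = \del\Dc_{22}^F\t z_0$; then $\t w_0 := \del\t z_0 \neq 0$ satisfies $\Dc_{22}^F\t w_0 = \t z_0$, whence $\smat{\Dc_{22}^F\t w_0 \\ \t w_0} = \smat{I \\ \del I}\t z_0$. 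Substituting $\t w_0$ into the last display contradicts \eqref{SHI::eq::multi_minus_one}, which forces $\smat{I \\ \del I}^T P\smat{I \\ \del I} \cge 0$. As $\del = 0$ is trivial, $I - \del\Dc_{22}^F$ is invertible for every $\del \in [0,1]$, which is exactly the claimed well-posedness.

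For the energy gain I would fix $\del \in [0,1]$ and close the loop via $\t w = \del\t z$; well-posedness renders \eqref{SHI::eq::sys_escl} a genuine LTI system $\dot x = A_\del x + B_\del d$, $e = C_\del x + D_\del d$. With $V(x) := x^TX x \geq 0$ and $\dot V = 2x^TX\dot x$, using the nonnegativity of the multiplier term the first display gives
\begin{equation*}
\dot V + e^Te - \ga^2 d^Td \leq -\eps_0\bigl(x^Tx + d^Td + \t w^T\t w\bigr) - \del(1-\del)\,\t z^T(H+H^T)\t z \leq -\eps_0\, d^Td
\end{equation*}
along all trajectories. Putting $d = 0$ yields $\dot V \leq -\eps_0\, x^Tx < 0$ for $x \neq 0$, so $V$ is a strict Lyapunov function and $A_\del$ is Hurwitz; consequently $x, e \in L_2$ whenever $d \in L_2$. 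Integrating over $[0,\infty)$ from $x(0) = 0$ and using $V \geq 0$ then gives $\|e\|_{L_2}^2 \leq (\ga^2 - \eps_0)\|d\|_{L_2}^2$, i.e., an energy gain smaller than $\ga$ in the sense of Definition \ref{SHI::def::stab}. Since the certificate $(X, P)$ is independent of $\del$, the bound holds uniformly for all $\del \in [0,1]$.

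The step I expect to need the most care is well-posedness, for two reasons: the entire dissipation argument presupposes that trajectories of \eqref{SHI::eq::sys_escl} are defined, and it is precisely here that the multiplier property \eqref{SHI::eq::multi_minus_one} must do double duty — certifying $\det(I - \del\Dc_{22}^F) \neq 0$ on all of $[0,1]$ while simultaneously making the uncertainty term nonnegative in the dissipation estimate. The single algebraic fact that makes both go through is the sign $\del(1-\del) \geq 0$ on the unit interval, so I would take care to verify that the chosen $P \in \Pb$ delivers the needed definiteness uniformly in $\del$ rather than only at the endpoints.
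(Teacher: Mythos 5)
Your proposal is correct. It is worth noting that the paper itself gives no proof of this lemma at all: it simply remarks that the statement ``can also be viewed as a special case of the findings in \cite{Sch01}'', i.e., it is obtained by specializing the full-block S-procedure / static-IQC analysis machinery to the multiplier class $\Pb$ and the uncertainty set $\{\del I : \del \in [0,1]\}$. What you have written is, in effect, the self-contained dissipation argument that underlies that citation, and every step checks out: the pointwise quadratic bound with a uniform $\eps_0$ follows from strictness and homogeneity of \eqref{SHI::lem::lmi_stabes}; the well-posedness contradiction is sound, since for $\t z_0 = \del \Dc_{22}^F \t z_0 \neq 0$ and $\t w_0 = \del \t z_0$ the vector $\smat{\t z_0 \\ \t w_0} = \smat{I \\ \del I}\t z_0$ makes the multiplier term simultaneously $\geq 0$ by \eqref{SHI::eq::multi_minus_one} and $\leq -\eps_0\del^2\|\t z_0\|^2 < 0$ by your specialized bound; and the Lyapunov/integration step correctly delivers Hurwitzness of $A_\del$ and the gain bound $\|e\|_{L_2}^2 \leq (\ga^2-\eps_0)\|d\|_{L_2}^2$ with a certificate, hence an $\eps_0$, that is independent of $\del$. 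Your treatment of $\del$ as a fixed constant matches the lemma's statement (the claim is for each frozen $\del \in [0,1]$), though your argument would extend verbatim to time-varying $\del(t)$ since the multiplier term is handled pointwise in time. Compared to the paper's approach, yours trades brevity for transparency: the citation buys the lemma as one instance of a general LPV analysis framework, while your proof makes explicit the one algebraic fact doing all the work, namely $\del(1-\del)(H+H^T) \cge 0$ on $[0,1]$, and shows precisely how it certifies well-posedness and dissipation at once.
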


	The specific structure of the system \eqref{SHI::eq::sys_es} \emph{and} of the multipliers in $\Pb$ allow us to render the design problem on the right in Fig.~\ref{SHI::fig::of_homotop} convex, e.g., by relying on the elimination lemma. This is the first statement of the following result.
	
	\begin{theorem}
		\label{SHI::theo::estimation}
		Let $G_{ij}^F$ denote the transfer matrices corresponding to \eqref{SHI::eq::sys_es} and let $V_F$ be a basis matrix of $\ker\smat{C_2 & D_{21}  & 0 \\ F_1 & F_2 & I}$. Further, suppose that $A^F = A + B_2F_1$ is stable.
		Then there exists a controller $\hat K$ for the system \eqref{SHI::eq::sys_es} such that the robust analysis LMIs \eqref{SHI::lem::lmi_stabes} are satisfied if and only if there exist symmetric matrices $X$ and $ P\in\Pb$ satisfying
		\begin{subequations}
			\label{SHI::theo::eq::lmi_es}
			\begin{equation}
				\arraycolsep=2pt
				%
				V_F^T\Ls \left( \mat{cc}{0 & X \\ X & 0}, \mat{c:c}{P_\ga & 0  \\ \hdashline 0 & P},
				\mat{cc}{G_{11}^F & G_{12}^F \\ I & 0 \\ \hdashline 
					G_{21}^F & G_{22}^F  \\ 0 & I }_{\ss}\right)V_F \cl 0
				\teq{ and }
				\Ls\left(\mat{cc}{0 & X \\ X & 0},  P_\ga,
				\mat{c}{G_{11}^F  \\ I }_\ss \right) \cl 0.
				\dlabel{SHI::theo::eq::lmi_esa}{SHI::theo::eq::lmi_esb}
			\end{equation}
		\end{subequations}
		Moreover, the above LMIs are feasible if and only if the primal synthesis LMIs \eqref{SHI::theo::eq::lmi_ofF} are feasible. In particular, feasibility of \eqref{SHI::theo::eq::lmi_es} implies that there exists a controller $K$ for the original system \eqref{SHI::eq::sys_of} such that the analysis LMIs \eqref{SHI::lem::lmi_stab} are feasible for the corresponding closed-loop interconnection.
	\end{theorem}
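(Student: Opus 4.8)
The plan is to prove the three assertions of Theorem~\ref{SHI::theo::estimation} in order: the convexification of the $\t K$-design by elimination (the stated equivalence), the identity between the reduced LMIs and the primal synthesis LMIs \eqref{SHI::theo::eq::lmi_ofF}, and finally the construction of a static gain $K$ through Theorem~\ref{SHI::theo::ofF}.

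For the equivalence I would start from the robust analysis LMI \eqref{SHI::lem::lmi_stabes} for the closed loop \eqref{SHI::eq::sys_escl} and eliminate $\t K$. Reconnecting the controller alters only the $\t z$-row of the realization, where $\t K$ enters as $+\,\t K\,\Theta$ with $\Theta=\smat{C_2 & D_{21} & 0\\ F_1 & F_2 & I}$ the $\h y$-output matrix. The decisive structural observation is that, upon substitution into \eqref{SHI::lem::lmi_stabes} with the block-diagonal weight $\diag(\smat{0 & X\\ X & 0},P_\ga,P)$, the term quadratic in $\t K$ carries the factor given by the $(\t z,\t z)$-block of the weight, i.e. the top-left block of $P\in\Pb$, which is zero. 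Hence the inequality is affine in $\t K$ and the elimination lemma~\ref{RS::lem::elimination} applies, producing two projected conditions. The first is taken along a basis of $\ker\Theta$, namely $V_F$, and yields \eqref{SHI::theo::eq::lmi_esa}. The second annihilates the $\t z$-output direction through which $\t K$ acts; because the $\t z$-row of the weight reads $(0,H^T)$ with $H$ invertible, the relevant annihilator is $\{\t w=0\}$, on which the $P$-weighted block contributes nothing, leaving exactly the bounded-real inequality \eqref{SHI::theo::eq::lmi_esb} for the full-information loop \eqref{SHI::eq::clF}. As in Theorem~\ref{SHI::theo::ofF}, stability of $A^F=A+B_2F_1$ together with the $(1,1)$-block of \eqref{SHI::theo::eq::lmi_esb} forces $X\cg 0$.

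For the ``Moreover'' claim I would verify that the two reduced systems coincide term by term. Since $G^F=G_{11}^F$, the inequality \eqref{SHI::theo::eq::lmi_esb} is literally \eqref{SHI::theo::eq::lmi_ofFb}. To handle \eqref{SHI::theo::eq::lmi_esa} I would parametrize $\ker\Theta$ by $\binom{x}{d}\in\ker(C_2,D_{21})$ and $\t w=-(F_1x+F_2d)$, so that $V_F=\smat{I\\-(F_1\ F_2)}V$ for a basis $V$ of $\ker(C_2,D_{21})$. On this subspace the open-loop signal $\t z=-(F_1x+F_2d)$ equals $\t w$, which is the homotopy value $\del=1$; by \eqref{SHI::eq::multi_minus_one} at $\del=1$ the $P$-weighted block vanishes, and the full-information terms cancel to give $\dot x=Ax+B_1d$ and $e=C_1x+D_{11}d$. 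Thus the augmented plant restricted to $\ker\Theta$ collapses to the original $G_{11}$, and \eqref{SHI::theo::eq::lmi_esa} reduces to $V^T\Ls(\smat{0 & X\\ X & 0},P_\ga,\smat{G_{11}\\I}_\ss)V\cl 0$, which is precisely \eqref{SHI::theo::eq::lmi_ofFa}, with $P$ playing no role. Consequently \eqref{SHI::theo::eq::lmi_es} and \eqref{SHI::theo::eq::lmi_ofF} have the same feasible set in $X$.

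Finally, feasibility of \eqref{SHI::theo::eq::lmi_ofF} with $X\cg 0$ and $A+B_2F_1$ stable is exactly the hypothesis of Theorem~\ref{SHI::theo::ofF}, whose construction delivers a static $K$ for \eqref{SHI::eq::sys_of} such that the analysis LMIs \eqref{SHI::lem::lmi_stab} hold for the closed loop; this is the ``In particular'' statement. I expect the only delicate part to be the bookkeeping in the elimination: confirming that the second projection genuinely collapses to the full-information bounded-real LMI, and that the $V_F$-projection, through the cancellations forced at $\del=1$, reproduces the original plant $G_{11}$ rather than the augmented one. Both of these, as well as the vanishing of the quadratic $\t K$-term, hinge on the defining zero block of the multiplier class $\Pb$, so the proof ultimately rests on keeping that structure visible throughout the algebra.
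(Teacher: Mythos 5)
Your overall route is the paper's: eliminate $\t K$ from \eqref{SHI::lem::lmi_stabes} via Lemma \ref{RS::lem::elimination}, exploit the zero-block structure of the multiplier class $\Pb$, identify the reduced LMIs with \eqref{SHI::theo::eq::lmi_ofF}, and invoke Theorem \ref{SHI::theo::ofF} for the concluding claim. Your treatment of the ``Moreover'' part is essentially the paper's argument verbatim: the factorization $V_F=\smat{I\\-F}V$, the collapse of the augmented plant to $G_{11}$ on $\ker(C_2,D_{21})$, the vanishing of the $P$-term because $\smat{I\\I}^TP\smat{I\\I}=0$ (your appeal to \eqref{SHI::eq::multi_minus_one} at $\del=1$), and the identity $G_{11}^F=G^F$ so that \eqref{SHI::theo::eq::lmi_esb} coincides with \eqref{SHI::theo::eq::lmi_ofFb}. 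You also correctly note that stability of $A^F$ plus the left upper block of \eqref{SHI::theo::eq::lmi_esb} forces $X\cg 0$.

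The genuine gap is in your handling of the second condition produced by the elimination lemma. Lemma \ref{RS::lem::elimination} returns one \emph{primal} projected inequality (along $V_F$, giving \eqref{SHI::theo::eq::lmi_esa}) and one \emph{dual} inequality of the form \eqref{RS::lem::eq::elib}, posed with the \emph{inverse} of the entire weight $\diag\bigl(\smat{0&X\\X&0},P_\ga,P\bigr)$. You instead describe this second condition as a restriction of the primal inequality \eqref{SHI::lem::lmi_stabes} to the subspace $\{\t w=0\}$, killed by the zero top-left block of $P$ itself; that is not what the lemma gives. The paper's proof at this point (i) writes out the dual condition explicitly, (ii) uses that $P^{-1}$ has a zero $(2,2)$-block — a different structural fact, obtained from the explicit form of multipliers in $\Pb$ — so that the $P^{-1}$-weighted term disappears after the $U_\perp$-projection, and (iii) applies the dualization lemma \ref{RS::lem::dualization} to convert the surviving dual bounded-real inequality into the primal \eqref{SHI::theo::eq::lmi_esb}. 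Neither $P^{-1}$ nor dualization appears anywhere in your proposal, and this matters asymmetrically: for \emph{necessity} your shortcut can be repaired, since restricting \eqref{SHI::lem::lmi_stabes} to $\t w=0$ does yield \eqref{SHI::theo::eq::lmi_esb} directly (the state- and $e$-rows of the closed loop are $\t K$-free and the $(1,1)$-block of $P$ vanishes); but for \emph{sufficiency} — constructing $\t K$ from \eqref{SHI::theo::eq::lmi_es} — you must verify the elimination lemma's actual dual hypothesis, and passing from \eqref{SHI::theo::eq::lmi_esb} to that hypothesis is exactly the dualization step you omitted. A further small misconception: the elimination lemma does not require the inequality to be affine in $\t K$; it handles quadratic dependence directly (only the inertia of the weight matters), so your observation that the $\t K$-quadratic term vanishes, while true, is irrelevant to its applicability.
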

	\vspace{1ex}

	In \cite{HolSch19} we gave a trajectory based proof of the second statement of this result in the context of robust output-feedback design. Here, we show solely based on algebraic manipulations and on the LFR framework that 
	Theorem \ref{SHI::theo::estimation} actually recovers the primal design result Theorem \ref{SHI::theo::ofF} while having a nice interpretation in terms of Fig.~\ref{SHI::fig::of_homotop}.
	
	\begin{proof}
		{\it First statement:} Suppose that there is a controller $\hat K$ such that the closed-loop robust analysis LMIs \eqref{SHI::lem::lmi_stabes} are satisfied. Further, note that $U = \smat{I & 0 & 0 \\ 0 & I & 0}^T$ is an annihilator for $(0, 0, I)$ and that $P^{-1} = \smat{\bullet & \bullet \\ \bullet & 0}$ due to the structure of multipliers in $\Pb$. Employing the elimination lemma then leads to the LMI \eqref{SHI::theo::eq::lmi_esa} and to
		\begin{equation*}
			\arraycolsep=0.5pt
			0 \cl (\bullet)^T \mat{cc|c:c}{0 & X & 0 & 0 \\ X & 0 & 0 & 0 \\ \hline 0 & 0 & P_\ga & 0 \\ \hdashline 0 & 0 & 0 & P}^{-1}
			\mat{ccc}{I & 0 & 0 \\ -(A^F)^T & -(C_1^F)^T & -(C_2^F)^T \\ \hline 
				0 & I & 0 \\ -(B_1^F)^T & -(D_{11}^F)^T & -(D_{21}^F)^T \\ \hdashline 
				0 & 0 & I \\ -B_2^T & -D_{12}^T & 0} U 
			%
			%
			= (\bullet)^T \mat{cc|c}{0 & X^{-1} & 0 \\ X^{-1} & 0 & 0 \\ \hline 0 & 0 & P_\ga^{-1} }
			\mat{cc}{I & 0 \\ -(A^F)^T & -(C_1^F)^T  \\ \hline 
				0 & I  \\ -(B_1^F)^T & -(D_{11}^F)^T}.
		\end{equation*}
		An application of the dualization lemma \ref{RS::lem::dualization} yields \eqref{SHI::theo::eq::lmi_esb} and finishes the necessity part of the proof. The converse is obtained by reversing the arguments.

		{\it Second statement:} Observe that a valid annihilator $V_F$ is given by the choice $V_F = \smat{I \\ -F} V$ with $V$ being a basis matrix of $\ker(C_2, D_{21})$. Moreover, via elementary computations and by recalling \eqref{SHI::eq::sys_es}, we have
		\begin{equation*}
			\arraycolsep=1pt
			\mat{c:c}{G_{11}^F & G_{12}^F \\ I & 0 \\ \hdashline 
				G_{21}^F & G_{22}^F \\ 0 & I }_{\ss}\! \mat{c}{I \\ -F}\!
			%
			%
			= \mat{c}{\mat{c}{G_{11}  \\ I 
				}_{\ss} \\\hdashline - \mat{c}{I \\ I}F}
			\text{ and thus \eqref{SHI::theo::eq::lmi_esa} reads as }
			0 \cg V^T\Ls\left(\mat{cc}{0 & X \\ X & 0},  P_\ga, \mat{c}{G_{11} \\ I 
			}_{\ss} \right)  V 
			+ (\bullet)^T \mat{c}{I \\ I}^T P \mat{c}{I \\ I}FV.
		\end{equation*}
		The latter inequality is actually identical to \eqref{SHI::theo::eq::lmi_ofFb} since $\smat{I \\ I}^TP\smat{I \\ I} = 0$ follows from $P \in \Pb$. This shows that feasibility of \eqref{SHI::theo::eq::lmi_es} implies validity of \eqref{SHI::theo::eq::lmi_ofF}. Conversely, if \eqref{SHI::theo::eq::lmi_ofFb} is satisfied, we can pick any $P \in \Pb$ and infer that the latter inequality is true, which leads to \eqref{SHI::theo::eq::lmi_es}.
	\end{proof}

	As the most important benefit of the above interpretation, the design problem corresponding to Fig.~\ref{SHI::fig::of_homotop} can alternatively be solved, e.g., via a convexifying parameter transformation instead of elimination and in various other important scenarios. In particular, this allows for an extension of the dual iteration to situations where elimination is not or only partly possible. 
	To this end, let us show how to solve the design problem corresponding to Fig.~\ref{SHI::fig::of_homotop} without elimination.
	
	\begin{theorem}
		\label{SHI::theo::ofF_par}
		Suppose that $A^F = A + B_2F_1$ is stable. Then there exists a controller $\hat K$ for the system \eqref{SHI::eq::sys_es} such that the robust analysis LMIs \eqref{SHI::lem::lmi_stabes} are feasible if and only if there exists matrices $H$, $N = (N_1, N_2)$ and a symmetric matrix $X$ satisfying
		\begin{subequations}
			\label{SHI::theo::eq::lmi_ofF_par}
			\begin{equation}
				%
				H + H^T \cg 0
				\teq{ and }
				\Ls\left(\mat{cc}{0  & X \\ X  & 0},
				\mat{c:cc}{P_\ga & 0 & 0\\ \hdashline 0 & 0 & I \\ 0 & I & -H-H^T},
				\mat{ccc}{A^F  & B_1^F  & B_2 \\ \hline
					C_1^F  & D_{11}^F  & D_{12}  \\ 0 & I & 0 \\ \hdashline
					HC_2^F + NC_3^F & HD_{21}^F + ND_{31}^F & ND_{32}^F  \\ 0 & 0 & I} \right) \cl 0.
				\dlabel{SHI::theo::eq::lmi_ofF_para}{SHI::theo::eq::lmi_ofF_parb}
			\end{equation}
		\end{subequations}
		%
		%
		If the above LMIs are feasible, a static controller $K$ such that the analysis LMIs \eqref{SHI::lem::lmi_stab} are feasible for the closed-loop system \eqref{SHI::eq::cl_of} is given by $K := (H - N_2)^{-1}N_1$.
	\end{theorem}

	\begin{proof}
		We only prove the sufficiency part of the first statement and the second statement for brevity.
		Note at first that $X\cg 0$ follows from stability of $A^F$ and by considering the left upper block of \eqref{SHI::theo::eq::lmi_ofF_parb}.
		Moreover, observe that $H$ is nonsingular by $H + H^T \cg 0$. 
		%
		Then we can rewrite \eqref{SHI::theo::eq::lmi_ofF_parb} with $P := \smat{0 & H^T  \\ H  & -H-H^T} \in \Pb$ and $\hat K := H^{-1}N$ as
		\begin{equation}
			0 \cg \Ls\left(\mat{cc}{0 & X  \\ X  & 0},
			\mat{c:c}{P_\ga & 0 \\ \hdashline 0 & P},
			\mat{ccc}{A^F & B_1^F  & B_2 \\ \hline
				C_1^F  & D_{11}^F  & D_{12}  \\ 0 & I & 0 \\ \hdashline
				C_2^F + \hat K C_3^F  & D_{21}^F + \hat K D_{31}^F  & \hat K D_{32}^F  \\ 0 & 0 & I} \right)
			= \Ls\left(\mat{cc}{0 & X \\ X & 0}, \mat{c|c}{P_\ga & 0 \\ \hline 0 & P}, \mat{cc}{\Gc_{11}^F & \Gc_{12}^F \\ I & 0 \\ \hline \Gc_{21}^F & \Gc_{22}^F \\ 0 & I}_\ss \right).
			\label{SHI::pro::lmiF}
		\end{equation}
		In particular, $\hat K$ is a controller for the system \eqref{SHI::eq::sys_es} as desired. Moreover, from the right lower block of \eqref{SHI::pro::lmiF} and the structure of $P$ we infer
		\begin{equation*}
			\mat{c}{\hat K D_{32}^F \\ I }^TP\mat{c}{\hat K D_{32}^F \\ I } \cl 0
			\teq{ and }
			\mat{c}{I \\ \del I}^T P \mat{c}{I \\ \del I} \cge 0
			\teq{ for all }\del \in [0, 1].
		\end{equation*}
		This implies $\det(I - \del \hat K D_{32}^F) \neq 0$ for all $\del \in [0, 1]$ and, in particular, that $I - \hat K D_{32}^F$ is nonsingular. By recalling the abbreviations in \eqref{SHI::eq::sys_es}, we get
		\begin{equation*}
			W := (W_1, W_2) := (I - \hat K D_{32}^F)^{-1}(C_2^F + \hat K C_3^F,~ D_{21}^F + \hat K D_{31}^F) = -F + (H - N_2)^{-1}N_1 (C_2, D_{21}) = -F + K(C_2, D_{21}).
		\end{equation*}
		Then we obtain via elementary computations that 
		\begin{equation*}
			\arraycolsep=3pt
			\mat{ccc}{I & 0 & 0 \\ A^F  & B_1^F  & B_2 \\ \hline
				C_1^F  & D_{11}^F  & D_{12}  \\ 0 & I & 0 \\ \hdashline
				C_2^F + \hat K C_3^F  & D_{21}^F + \hat K D_{31}^F  & \hat K D_{32}^F  \\ 0 & 0 & I}
			\mat{cc}{I & 0 \\ 0 & I \\ 
				W_1  & W_2 }
			= \mat{cc}{I & 0 \\ 
				A + B_2 K C_2 & B_1 + B_2 K D_{21} \\ \hline
				C_1 + D_{12}K C_2 & D_{11} + D_{12}K D_{21} \\ 0 & I \\ \hdashline
				W_1  & W_2  \\ W_1 & W_2 }.
		\end{equation*}
		In particular, we can infer from \eqref{SHI::pro::lmiF} that 
		\begin{equation*}
			\Ls\left(\mat{cc}{0 & X  \\ X  & 0}, P_\ga,
			\mat{cc}{A + B_2KC_2 & B_1 + B_2KD_{21} \\ \hline
				C_1 + D_{12}KC_2 & D_{11} + D_{12}KD_{21} \\ 0 & I }\right)
			\cl - W^T\mat{c}{I \\ I}^T P \mat{c}{I \\ I} W = 0
		\end{equation*}
		holds, which yields the last claim.	
	\end{proof}

	As it has been discussed for the primal one, the dual design result in Theorem \ref{SHI::theo::ofE} can as well be interpreted as the solution to the dual synthesis problem corresponding to Fig.~\ref{SHI::fig::of_homotop}. This is closely related to a feedforward synthesis problem. 
	In fact, it can be viewed as a separation-like result which involves the consecutive construction of a full-actuation controller and a corresponding feedforward-like controller.
	To be concrete, for a given full-actuation gain $E = (E_1^T, E_2^T)^T$, the dual synthesis problem corresponding to Fig.~\ref{SHI::fig::of_homotop} amounts to finding a static controller $\hat K$ such that the robust analysis LMIs \eqref{SHI::lem::lmi_stabes} are feasible for the interconnection of the controller $\hat K$ and the uncertain open-loop system
	\begin{equation}
		\mat{c}{\dot x(t) \\ \hline e(t) \\ \hdashline \t z(t) \\ \hdashline \h y(t)}
		= \mat{c|c:c:c}{A^E & B_1^E & B_2^E & B_3^E \\ \hline
			C_1^E & D_{11}^E & D_{12}^E & D_{13}^E \\ \hdashline
			C_2 & D_{21} & 0 & D_{23}^E \\ \hdashline
			0 & 0 & I & 0}
		\mat{c}{x(t) \\ \hline d(t) \\ \hdashline \t w(t) \\ \hdashline \h u(t)}
		= \mat{c|c:c:cc}{A + E_1C_2 & B_1 + E_1D_{21} & -E_1 & B_2 & E_1 \\ \hline
			C_1 + E_2C_2 & D_{11} + E_2D_{21} & -E_2 & D_{12} & E_2\\ \hdashline
			C_2 & D_{21} & 0 & 0 & I \\ \hdashline
			0 & 0 & I & 0  & 0}
		\mat{c}{x(t) \\ \hline d(t) \\ \hdashline\t w(t) \\ \hdashline \h u(t)},\qquad
		\t w(t) = \del \t z(t).
		\label{SHI::eq::clEff}
	\end{equation}
	A convex solution to this design problem is given by the following result. Apart from an application of the dualization lemma~\ref{RS::lem::dualization}, the proof is almost identical to the one of Theorem \ref{SHI::theo::ofF_par} and thus omitted for brevity.
	
	\begin{theorem}
		\label{SHI::theo::ofE_par}
		Suppose that $A^E = A + E_1C_2$ is stable. Then there exists a controller $\hat K$ for the system \eqref{SHI::eq::clEff} such that the LMIs \eqref{SHI::lem::lmi_stabes} are feasible for the resulting closed-loop system if and only if there exists matrices $H$, $N = (N_1^T, N_2^T)^T$ and a symmetric matrix $Y$ satisfying
		\begin{subequations}
			\label{SHI::theo::eq::lmi_ofE_par}
			\begin{equation}
				%
				H + H^T \cg 0
				\teq{ and }
				\Ls\left(\mat{cc}{0 & Y \\ Y  & 0},
				\mat{c:cc}{P_\ga^{-1} & 0 & 0\\ \hdashline 0 & H+ H^T & I \\ 0 & I & 0},
				\mat{ccc}{-(A^E)^T  & -(C_1^E)^T  & -C_2^T \\[0.2ex] \hline
					0 & I & 0 \\ -(B_1^E)^T  & -(D_{11}^E)^T  & -D_{21}^T   \\[0.2ex] \hdashline
					0 & 0 & I \\ -(B_2^EH + B_3^E N)^T &  -(D_{12}^EH + D_{13}^E N)^T & -(D_{23}^EN)^T} \right) \cg 0.
				\dlabel{SHI::theo::eq::lmi_ofE_para}{SHI::theo::eq::lmi_ofE_parb}
			\end{equation}
		\end{subequations}
		If the above LMIs are feasible, a static controller $K$ such that the analysis LMIs \eqref{SHI::lem::lmi_stab} are feasible for the closed-loop system \eqref{SHI::eq::cl_of} is given by
		$K := N_1(H - N_2)^{-1}$.
	\end{theorem}

	Let us conclude the section with an interesting observation. Due to the elimination lemma, feasibility of the primal synthesis LMIs \eqref{SHI::theo::eq::lmi_ofF} is equivalent to the existence of a static output-feedback controller $K$ and a \emph{common} certificate $X$ satisfying
	\begin{equation}
		%
		\Ls\left(\mat{cc}{0 & X \\ X & 0},P_\ga , \mat{c}{G_{11}^F \\ I}_\ss \right) \cl 0
		\teq{ and }
		\Ls\left(\mat{cc}{0 & X \\ X & 0}, P_\ga , \mat{c}{\Gc \\ I}_\ss \right) \cl 0.
		\label{SHI::eq::lmiFcond}
	\end{equation}
	for $G^F_{11}$ as in Theorem \ref{SHI::theo::ofF} and for $\Gc = [\Ac, \Bc, \Cc, \Dc]$ being the transfer matrix corresponding to \eqref{SHI::eq::cl_of} the closed-loop interconnection of the system \eqref{SHI::eq::sys_of} and the controller $K$. 
	Thus, by solving the primal synthesis LMIs, the dual iteration aims in each primal step to find a static controller $K$, which is linked to the given full-information controller $F$ through the common certificate $X$. 
	This shows once more that the suggested initialization in Remark~\ref{SHI::rema::init} makes sense for the dual iteration as well.

	Due to Theorem \ref{SHI::theo::estimation}, we also know that feasibility of the primal synthesis LMIs \eqref{SHI::theo::eq::lmi_ofF} is equivalent to the existence of a controller $\hat K$ such that the robust analysis LMIs \eqref{SHI::lem::lmi_stabes} are satisfied for the closed-loop system \eqref{SHI::eq::sys_escl}. Let us provide some alternative arguments that the existence of such a controller $\hat K$ is equivalent to feasibility of the LMIs \eqref{SHI::eq::lmiFcond}:

	Let a suitable controller $\hat K = (K_1, K_2)$ be given. Then note that the uncertain closed-loop system \eqref{SHI::eq::sys_escl} can also be expressed as
	\begin{equation*}
		\arraycolsep=4pt
		\mat{c}{\dot x(t) \\ e(t)}
		=\mat{cc}{A + B_2 (I - K_2 \del)^{-1}\big[(1-\del)F_1 + \del K_1 C_2 \big] & 
			B_1 + B_2(I - K_2 \del)^{-1} \big[(1-\del)F_2 + \del K_1 D_{21} \big] \\
			C_1 + D_{12}(I - K_2 \del)^{-1} \big[(1-\del)F_1 + \del K_1 C_2 \big] &
			D_{11} + D_{12}(I - K_2\del )^{-1} \big[(1-\del)F_2 + \del K_2 D_{21} \big]}
		\mat{c}{x(t) \\ d(t)};
	\end{equation*}
	in the sequel we abbreviate the above system matrices as $A(\del)$, $B(\del)$, $C(\del)$ and $D(\del)$, respectively. Since the robust analysis LMIs \eqref{SHI::lem::lmi_stabes} are satisfied, we infer, in particular, that
	\begin{equation}
		\Ls\left(\mat{cc}{0 & X \\ X & 0},P_\ga , \mat{cc}{A(\del) & B(\del)\\ \hline C(\del) & D(\del) \\ 0 & I}\right) \cl 0
		\teq{ for all }\del \in [0, 1].
		\label{SHI::eq::lmiFcond2}
	\end{equation}
	This yields \eqref{SHI::eq::lmiFcond} for $K:= (I - K_2)^{-1}K_1$ by considering the special cases $\del = 0$ and $\del = 1$.
	
	Conversely, suppose that \eqref{SHI::eq::lmiFcond} holds for some static gain $K$. Then we can apply the Schur complement twice to infer
	\begin{equation*}
		\mat{ccc}{(A^F)^TX\! +\! XA^F & XB_1^F & (\bullet)^T \\
			(\bullet)^T & - \ga^2 I & (\bullet)^T \\ C_1^F & D_{11}^F & - I} \cl 0
		\text{ ~as well as~ }
		\mat{ccc}{\Ac^TX \!+\! X\Ac & X\Bc & (\bullet)^T \\
			(\bullet)^T & - \ga^2 I & (\bullet)^T \\ \Cc & \Dc & - I} \cl 0
		\text{ ~and thus~ }
		\mat{ccc}{A(\del)^TX \!+\! XA(\del) & XB(\del) & (\bullet)^T \\
			(\bullet)^T & - \ga^2 I & (\bullet)^T \\ C(\del) & D(\del) & - I} \cl 0
	\end{equation*}
	for all $\del \in [0, 1]$ and for $\hat K = (K, 0)$ by convexity. Applying the Schur complement once more yields again \eqref{SHI::eq::lmiFcond2}. 
	From the full block S-procedure\cite{Sch97}, we infer the existence of a symmetric matrix $\t P$ such that the LMIs \eqref{SHI::lem::lmi_stabes} and $\smat{I \\ \del I}^T \t P \smat{I \\ \del I} \cge 0$ hold for all $\del \in [0, 1]$. As argued in \cite{DetSch01} it is finally possible to find some $P \in \Pb$ satisfying \eqref{SHI::lem::lmi_stabes} as well. 

	\section{Static Output-Feedback Multi-Objective Design}\label{SMO::sec::smo}
	
	In this section, we consider the synthesis of static output-feedback controllers satisfying multiple design specifications. As elaborated on, e.g., in \cite{Sch00a, SchGah97, EbiPea15, GumHen09}, such synthesis problems with multiple objectives are more challenging than those with a single objective; in particular, the corresponding non-convex design of static controllers is rendered even more difficult. 
	Multi-objective design problems are particularly challenging in the context of the dual iteration because the elimination lemma \ref{RS::lem::elimination} is no longer applicable. We rely on the interpretation and results in Section \ref{SHI::sec::interpretation} in order to provide a novel variant of the dual iteration.

	\subsection{Problem Description}

	For fixed real matrices of appropriate dimensions and initial conditions $x(0) \in \R^n$, we now consider the open-loop system
	\begin{equation}
		\arraycolsep=1pt
		\mat{c}{\dot x(t) \\\hline  e_1(t) \\ e_2(t)\\ y(t)} 
		= \mat{c|ccc}{A & B_1 & B_2 & B_3 \\ \hline C_1 & D_{11} & D_{12} & D_{13} \\ C_2 & D_{21} & D_{22} & D_{23} \\ C_3 & D_{31} & D_{32} & 0} 
		\mat{c}{x(t) \\\hline  d_1(t) \\ d_2(t) \\ u(t)}
		\label{SMO::eq::sys_of}
	\end{equation}
	for $t\geq 0$.
	For a given symmetric matrix $P= \smat{Q & S \\ S^T & R}$ with $R\cge 0$, we aim in this section to design a static controller
	\begin{equation}
		u(t) = K y(t)
		\label{SMO::eq::con_of}
	\end{equation}
	for the system  \eqref{SMO::eq::sys_of} such that the $H_\infty$-norm $\|\Gc_{11}\|_\infty = \sup_{\omega \in \R}\|\Gc_{11}(i\omega)\|$ is as small as possible and such that, additionally,
	\begin{equation}
		\mat{c}{\Gc_{22}(i\omega) \\ I}^\ast P\mat{c}{\Gc_{22}(i\omega) \\ I} \cl 0
		\teq{ for all }\omega \in \R \cup \{\infty \}
		\label{SMO::eq::scnd_constr}
	\end{equation}
	is satisfied; here $\Gc_{jj} := [A + B_3KC_3, B_j+B_3KD_{3j}, C_j + D_{j3}KC_3, D_{jj}+D_{j3}KD_{3j}]$ denote the closed-loop transfer matrices corresponding to the channel from $d_j$ to $e_j$ for $j \in \{1,2\}$. 
	The second objective \eqref{SMO::eq::scnd_constr}, characterized by the matrix $P$, can be employed, e.g., to enforce mandatory gain or passivity constraints on the closed-loop interconnection of \eqref{SMO::eq::sys_of} and \eqref{SMO::eq::con_of}. 
	This additional objective turns the problem of finding a suitable controller into a difficult static multi-objective design problem. Of course, one can include more than two objectives and it is also possible to deal, e.g., with constraints on the $H_2$-norm. However, we focus on the above setup for didactic reasons. Our approach is based on the following result which is a well-known and minor extension of the bounded real lemma.
	
	\begin{lemma}
		\label{SMO::theo::of}
		Let $P_\gamma := \smat{I & 0 \\ 0 & -\ga^2 I}$ and let $\Gc_{ij}$ denote the transfer matrices corresponding to the interconnection of \eqref{SMO::eq::sys_of} and some controller \eqref{SMO::eq::con_of}. Then $\|\Gc_{11}\|_\infty < \ga$ and \eqref{SMO::eq::scnd_constr} hold if and only if there exist positive definite matrices $X$ and $Y$ satisfying 
		\begin{equation}
			\Ls\left(\mat{cc}{0 & X \\ X & 0}, P_\ga, \mat{c}{\Gc_{11} \\ I
			}_{\ss} \right)  \cl 0
			\teq{ and }
			\Ls\left(\mat{cc}{0 & Y \\ Y & 0}, P, \mat{c}{\Gc_{22} \\ I}_{\ss} \right)  \cl 0.
			\label{SMO::theo::eq::lmi_ana}
		\end{equation}
		We denote by $\ga_\opt$ the infimal $\ga > 0$ such that there exists a controller \eqref{SMO::eq::con_of} that renders the analysis LMIs \eqref{SMO::theo::eq::lmi_ana} feasible for the corresponding closed-loop system.
	\end{lemma}

	\begin{remark}
		Similarly as in the previous section, informative lower bounds on $\ga_\opt$ can be obtained by considering the corresponding dynamic output-feedback multi-objective design problems. 
		The latter problem admits a convex solution via the Youla parametrization \cite{Fra87, Sch00a} which can be costly to compute. A cheaper but also less informative lower bound is obtained by performing a standard $H_\infty$-design involving a dynamic controller for the system \eqref{SMO::eq::sys_of} without the channel from $d_2$ to $e_2$. 
	\end{remark}

	\subsection{Dual Iteration}
	
	We present now a variant of the dual iteration in order to compute upper bounds on $\ga_\opt$ and corresponding static controllers \eqref{SMO::eq::con_of}. We begin by stating the primal design result which is motivated by Theorem \ref{SHI::theo::ofF_par} and now involves two full-information gains $F_1 = (F_{11}, F_{12})$ and $F_2 = (F_{21}, F_{22})$, one for each of the objectives respectively.  
	The proof is almost identical to the one of Theorem \ref{SHI::theo::ofF_par} and thus omitted for brevity.
	
	\begin{theorem}
		\label{SMO::theo::ofF}
		There exists a controller \eqref{SMO::eq::con_of} such that the closed-loop analysis LMIs \eqref{SMO::theo::eq::lmi_ana} are feasible if there exists matrices $H$, $N = (N_1, N_2)$ and symmetric matrices $X$, $Y$ satisfying
		\begin{subequations}
			\label{SMO::theo::eq::lmi_ofF}
			\begin{equation}
				X\cg 0, \quad Y \cg 0,\quad H+H^T \cg 0,
			\end{equation}
			\begin{equation}
				\Ls\left(\mat{cc}{0  & X \\ X  & 0},
				\mat{c:cc}{P_\ga & 0 & 0\\ \hdashline 0 & 0 & I \\ 0 & I & -H-H^T},
				\mat{ccc}{A + B_3F_{11}  & B_1 + B_3F_{12}  & B_3 \\ \hline
					C_1 +D_{13}F_{11}  & D_{11}+D_{13}F_{12}  & D_{13}  \\ 0 & I & 0 \\ \hdashline
					(N_2-H)F_{11} + N_1C_3 & (N_2 - H)F_{12} + N_1D_{31} & N_2  \\ 0 & 0 & I} \right) \cl 0
				\label{SMO::theo::eq::lmi_ofFd}
			\end{equation}
			and
			\begin{equation}
				\Ls\left(\mat{cc}{0  & Y \\ Y  & 0},
				\mat{c:cc}{P & 0 & 0\\ \hdashline 0 & 0 & I \\ 0 & I & -H-H^T},
				\mat{ccc}{A + B_3F_{21}  & B_2 + B_3F_{22}  & B_3 \\ \hline
					C_2 +D_{23}F_{21}  & D_{22}+D_{23}F_{22}  & D_{23}  \\ 0 & I & 0 \\ \hdashline
					(N_2-H)F_{21} + N_1C_3 & (N_2 - H)F_{22} + N_1D_{32} & N_2  \\ 0 & 0 & I} \right) \cl 0.
				\label{SMO::theo::eq::lmi_ofFe}
			\end{equation}
		\end{subequations}
		If the above LMIs are feasible, a static gain $K$ such that the analysis LMIs \eqref{SMO::theo::eq::lmi_ana} are feasible is given by 
		$K := (H - N_2)^{-1}N_1$. Moreover, we have $\ga_\opt \leq \ga_F$ for $\ga_F$ being the infimal $\ga > 0$ such that the above LMIs are feasible.
	\end{theorem}
	
	Observe that we employ identical matrices $H$ and $N$ in the LMIs \eqref{SMO::theo::eq::lmi_ofFd} and \eqref{SMO::theo::eq::lmi_ofFe} corresponding to the two different objectives.  
	In contrast to many other LMI based approaches, this choice does not introduce any conservatism in the following sense.
	
	\begin{theorem}
		\label{SMO::theo::necess}
		There exists a controller \eqref{SMO::eq::con_of} such that the closed-loop analysis LMIs \eqref{SMO::theo::eq::lmi_ana} are feasible if and only if there exist full-information gains $F_1, F_2$, matrices $H, N$ and symmetric matrices $X,Y$ satisfying \eqref{SMO::theo::eq::lmi_ofF}.
	\end{theorem}
	
	\begin{proof}
		Sufficiency follows from Theorem \ref{SMO::theo::ofF} and it remains to show necessity. To this end, let $X, Y$ and $K$ be matrices satisfying the inequalities \eqref{SMO::theo::eq::lmi_ana}. With those matrices as well as
		$F_1 := K(C_3, D_{31})$, $F_2 := K(C_3, D_{32})$ and $N:= H(K, 0)$ for some to-be-chosen matrix $H$, the left hand side of \eqref{SMO::theo::eq::lmi_ofFd} equals
		\begin{equation*}
			\Ls\left(\mat{cc}{0  & X \\ X  & 0},
			\mat{c:cc}{P_\ga & 0 & 0\\ \hdashline 0 & 0 & I \\ 0 & I & -H-H^T},
			\mat{ccc}{A + B_3KC_3  & B_1 + B_3KD_{31}  & B_3 \\ \hline
				C_1 +D_{13}KC_3  & D_{11}+D_{13}KD_{31}  & D_{13}  \\ 0 & I & 0 \\ \hdashline
				0 & 0 & 0  \\ 0 & 0 & I} \right)
			= \mat{cc}{\Gamma_{11} & \Gamma_{12} \\ \Gamma_{12}^T & \Gamma_{22}} - \mat{cc}{0 & 0 \\ 0 & H+H^T}.
		\end{equation*}
		Here, the blocks $\Gamma_{ij}$ do not depend on $H$ and $\Gamma_{11}$ is identical to the left-hand side of the first LMI in \eqref{SMO::theo::eq::lmi_ana}. Thus $\Gamma_{11}$ is negative definite and we can hence infer that \eqref{SMO::theo::eq::lmi_ofFd} is satisfied for $H = \alpha I$ and some large enough $\alpha >0$. Finally, we can argue analogously and increase $\alpha$ if necessary to conclude that  \eqref{SMO::theo::eq::lmi_ofFe} is satisfied as well.
	\end{proof}

	Analogously, the corresponding dual design result is motivated by Theorem \ref{SHI::theo::ofE_par} and involves two full-actuation gains $E_1 = (E_{11}^T, E_{12}^T)^T$ and $E_2 = (E_{21}^T, E_{22}^T)^T$.

	\begin{theorem}
		\label{SMO::theo::ofE}
		There exists a controller \eqref{SMO::eq::con_of} such that the closed-loop analysis LMIs \eqref{SMO::theo::eq::lmi_ana} are feasible if there exists matrices $H$, $N = (N_1^T, N_2^T)^T$ and symmetric matrices $X$, $Y$ satisfying
		\begin{subequations}
			\label{SMO::theo::eq::lmi_ofE}
			\begin{equation}
				X \cg 0,\quad Y \cg 0,\quad H + H^T \cg 0,
			\end{equation}
			\begin{equation}
				\Ls\left(\mat{cc}{0 & X \\ X  & 0},
				\mat{c:cc}{P_\ga^{-1} & 0 & 0\\ \hdashline 0 & H+ H^T & I \\ 0 & I & 0},
				\mat{ccc}{-(A + E_{11}C_3)^T  & -(C_1+E_{12}C_3)^T  & -C_3^T \\[0.2ex] \hline
					0 & I & 0 \\ -(B_1+E_{11}D_{31})^T  & -(D_{11}+E_{12}D_{31})^T  & -D_{31}^T   \\[0.2ex] \hdashline
					0 & 0 & I \\ -\big(E_{11}(N_2 - H) + B_3N_1\big)^T &  -\big(E_{12}(N_2 -H) + D_{13}N_1\big)^T & -N_2^T} \right) \cg 0
				\label{SMO::theo::eq::lmi_ofEd}
			\end{equation}
			and
			\begin{equation}
				\Ls\left(\mat{cc}{0 & Y \\ Y  & 0},
				\mat{c:cc}{P^{-1} & 0 & 0\\ \hdashline 0 & H+ H^T & I \\ 0 & I & 0},
				\mat{ccc}{-(A + E_{21}C_3)^T  & -(C_2+E_{22}C_3)^T  & -C_3^T \\[0.2ex] \hline
					0 & I & 0 \\ -(B_2+E_{21}D_{32})^T  & -(D_{22}+E_{22}D_{32})^T  & -D_{32}^T   \\[0.2ex] \hdashline
					0 & 0 & I \\ -\big(E_{21}(N_2 - H) + B_3N_1\big)^T &  -\big(E_{22}(N_2 -H) + D_{23}N_1\big)^T & -N_2^T} \right) \cg 0.
				\label{SMO::theo::eq::lmi_ofEe}
			\end{equation}
		\end{subequations}
		If the above LMIs are feasible, a static gain $K$ such that the analysis LMIs \eqref{SMO::theo::eq::lmi_ana} are feasible is given by $K := N_1(H - N_2)^{-1}$.  Moreover, we have $\ga_\opt \leq \ga_E$ for $\ga_E$ being the infimal $\ga > 0$ such that the above LMIs are feasible.
	\end{theorem}

	Similarly as stated in Theorem \ref{SHI::theo::it_summary}, the primal and dual design results Theorems \ref{SMO::theo::ofF} and \ref{SMO::theo::ofE} can be sequentially applied. Indeed, suppose that the LMIs \eqref{SMO::theo::eq::lmi_ofE} are feasible for some $\ga > 0$. Then Theorem \ref{SMO::theo::ofE} implies the existence of a static gain $K$ such that the analysis LMIs \eqref{SMO::theo::eq::lmi_ana} are feasible for $\ga$. Due to Theorem \ref{SMO::theo::necess} we can find some full-information gains $F_1$ and $F_2$ such that the LMIs \eqref{SMO::theo::eq::lmi_ofF} are feasible for exactly the same $\ga > 0$. Note that superior full-information gains than the ones proposed in the proof of Theorem \ref{SMO::theo::necess} can be obtained, e.g., by minimizing $\ga$ subject to \eqref{SMO::theo::eq::lmi_ofF} with variables $X, Y, H, F_1, F_1, \ga$ and for $N:= H(K, 0)$. 
	
	Analogously, we infer from the feasibility of the primal synthesis LMIs \eqref{SMO::theo::eq::lmi_ofF} the existence of full-actuation gains such that the dual synthesis LMIs \eqref{SMO::theo::eq::lmi_ofE} are feasible as well.
	In particular, the following dual iteration generates a monotonically decreasing sequence $(\ga^k)_{k \in \N}$ of upper bounds on $\ga_\opt$.
	
	\begin{Algorithm}
		\label{SMO::algo::dual_iteration}
		Dual iteration for static output-feedback multi-objective synthesis.
		\begin{enumerate}
			\item \emph{Initialization:} Set $\ga^0 := +\infty$, $k = 1$ and design initial full-information gains $F_1$ and $F_2$.
			\item \emph{Primal step:} Compute $\ga_F$ by solving the primal synthesis LMIs \eqref{SMO::theo::eq::lmi_ofF} for the given gains $F_1$, $F_2$ and choose some small $\eps_k>0$ such that $\ga^k := \ga_F(1+\eps_k) < \ga^{k-1}$. For $\ga = \ga^k$, determine matrices $X$, $Y$ satisfying the LMIs \eqref{SMO::theo::eq::lmi_ofF} and construct full-actuation gains $E_1$, $E_2$ satisfying the dual synthesis LMIs \eqref{SMO::theo::eq::lmi_ofE} for $(X, Y)$ replaced by $(X^{-1}, Y^{-1})$. 
			\item \emph{Dual step:} Compute $\ga_E$ by solving the dual synthesis LMIs \eqref{SMO::theo::eq::lmi_ofE} for the given gains $E_1$, $E_2$ and choose some small $\eps_{k+1}> 0$ such that $\ga^{k+1} := \ga_E (1+\eps_{k+1}) < \ga^k$. For $\ga = \ga^{k+1}$, determine matrices $X$, $Y$ satisfying the LMIs \eqref{SMO::theo::eq::lmi_ofE} and construct full-information gains $F_1$, $F_2$ satisfying the primal synthesis LMIs \eqref{SMO::theo::eq::lmi_ofF} for $(X, Y)$ replaced by $(X^{-1}, Y^{-1})$.
			\item \emph{Termination:} If $k$ is too large or $\ga^k$ does not decrease any more, then stop and construct a static output-feedback controller according to Theorem \ref{SMO::theo::ofE}.\\
			Otherwise set $k = k+2$ and go to the primal step. 
		\end{enumerate}
	\end{Algorithm}

	\begin{remark}
		Algorithm \ref{SMO::algo::dual_iteration} can be initialized, e.g., by considering the synthesis LMIs corresponding to the design of so-called mixed controllers as proposed, e.g., in \cite{SchGah97}. These are dynamic controllers and their design relies on employing a common Lyapunov certificate for each of the objectives in the underlying analysis LMIs. One can then proceed similarly as stated in Remark \ref{SHI::rema::init} in order to generate suitable initial gains $F_1$ and $F_2$.
	\end{remark}

	\subsection{Examples}

	In order to demonstrate the dual iteration as described in this section, we consider again several examples from COMPl\textsubscript{e}ib \cite{Lei04} and compare the iteration to the following two alternative static output-feedback design approaches that can deal with multiple objectives:
	\begin{itemize}
		\item The \texttt{systune} algorithm which is a derivation from \texttt{hinfstruct} from \cite{ApkNol06, Apk13} available in Matlab, using default options.
		\item \texttt{hifoo 3.5} with \texttt{hanso 2.01} from \cite{BurHen06, GumHen09} using default options.
	\end{itemize}
	As our second objective we consider here two scenarios. Both describe mandatory energy gain constraints with the matrix $P$ in \eqref{SMO::eq::scnd_constr} being chosen as
	\begin{equation*}
		P_1 := \mat{cc}{I_1 & 0 \\ 0 & -2^2 I_1}
		\teq{ and }
		P_2 := \mat{cc}{I_2 & 0 \\ 0 & -4^2 I_2}, 
	\end{equation*}
	respectively. The system matrices provided by COMPl\textsubscript{e}ib are partitioned accordingly in order to fit to the description \eqref{SMO::eq::sys_of}. The upper bounds on $\ga_\opt$ resulting from \texttt{systune} and \texttt{hifoo} are denoted as $\ga_{\mathrm{stu}}$ and $\ga_{\mathrm{hfo}}$, respectively. Moreover, let us denote by $\ga_{\mathrm{lb}}$ the lower bound on $\ga_\opt$ that is obtained by performing a standard $H_\infty$-design involving a dynamic controller for the system \eqref{SMO::eq::sys_of} without the channel from $d_2$ to $e_2$. Finally, we also determine the optimal gain bounds resulting from dynamic mixed controller synthesis\cite{SchGah97} and denote these by $\ga_{\mathrm{dm}}$; note that $\ga_{\mathrm{lb}} \leq \ga_{\mathrm{dm}}$ holds, but $\ga_{\mathrm{dm}} \leq \ga_\opt$ is not true in general due to the choice of a common Lyapunov certificate in the mixed design.
	
	
	The related numerical results are depicted in Table~\ref{SMO::tab::results_sof} and show that the upper bounds achieved by the dual iteration are close to the ones obtained by \texttt{systune} for most of the examples. \texttt{hifoo} does not seem to perform well for some of the considered examples and often results in more conservative upper bounds. 
	We also observe that the dual iteration tends to require more iterations until convergence if compared to single objective design problems. However, this is also true for the other two algorithms and due to the more difficult synthesis problem. Finally, note that, as already mentioned in the case of a single objective, all of the algorithms can profit from allowing more iterations or applying (randomized) restarting techniques at the expense of additional computation time.

	The numbers $T_{\ga^9}$ $T_{\ga^{41}}$, $T_{\ga_{\mathrm{stu}}}$ and $T_{\ga_{\mathrm{hfo}}}$ in Table~\ref{SMO::tab::results_sof} denote the average runtime for twenty runs in seconds required to compute $\ga^9$, $\ga^{41}$, $\ga_{\mathrm{stu}}$ and $\ga_{\mathrm{hfo}}$, respectively.
	For the considered examples, which all admit a relative small McMillan degree $n$, we observe that our implementation of the dual iteration as given in Algorithm \ref{SMO::algo::dual_iteration} is mostly slower than \texttt{systune} and mostly faster than \texttt{hifoo}. Recall that there are possibilities to reduce the computational burden for the dual iteration, but these are not discussed in this paper.
	As in the previous section, the running time of the algorithms \texttt{systune} and \texttt{hifoo} scales more nicely with the number of states $n$ of \eqref{SMO::eq::sys_of} since both algorithms are rather specialized and not based on solving LMIs.
	We emphasize that the flipside of this specialization is that these algorithms are (much) less amenable for various practical relevant generalization if compared to 
	the dual iteration.
	
	
	\begin{table}
		\newcommand{\f}[1]{\bf{#1}}
		\newcommand{\gr}[1]{\textcolor{gray}{#1}}
		\caption{Optimal closed-loop $H_\infty$-norms resulting from dynamic output-feedback design, upper bounds obtained via the dual iteration, \texttt{systune} and \texttt{hifoo} for several examples from \cite{Lei04} as well as the corresponding average running times within twenty runs in seconds. All values are rounded to two decimals.}%
		\label{SMO::tab::results_sof}%
		\begin{center}%
			\setlength{\tabcolsep}{5pt}%
			\renewcommand{\arraystretch}{1.2}%
			\begin{tabular}{@{}l@{\hskip 3ex}l@{\hskip 3ex}r@{\hskip 3ex}r@{\hskip 3ex}rrrrr@{\hskip 3.5ex}rr@{\hskip 3.5ex}rr@{}}
				\toprule
				& && &\multicolumn{5}{@{}c@{\hskip 3.5ex}}{Dual Iteration} &  \multicolumn{2}{@{}c@{\hskip 3.5ex}}{\texttt{systune}} & \multicolumn{2}{@{}c@{\hskip -1ex}}{\texttt{hifoo} }\\ \cmidrule(r{3.5ex}){5-9}\cmidrule(r{3.5ex}){10-11} \cmidrule{12-13}
				Scenario & Name & $\ga_\mathrm{lb}$ & $\ga_\mathrm{dm}$ & $\ga^5$ & $\ga^9$ & $T_{\ga^9}$ & $\ga^{41}$ & $T_{\ga^{41}}$ & $\ga_\mathrm{stu}$ & $T_{\ga_{\mathrm{stu}}}$ &$\ga_{\mathrm{hfo}}$ & $T_{\ga_{\mathrm{hfo}}}$\\ \hline 
				\multirow{7}{*}{$P_1$} 
				& AC3 & 2.33 & 2.62 & 3.01 & 2.96 & \gr{1.01} & 2.93 & \gr{4.16} & \f{2.44} & \gr{0.69} & 16.02 &  \gr{14.92} \\
				& AC8 & 0.51 & 1.31 & 0.70 & 0.64 & \gr{3.78} & \f{0.51} & \gr{13.86} & \f{0.51} & \gr{0.15} & 1.07 & \gr{11.35} \\
				& REA1 & 0.15 & 0.27 & 0.19 & \f{0.18} & \gr{0.41} & \f{0.18} & \gr{1.53}& \f{0.18} & \gr{0.18} & 0.22 & \gr{9.40} \\
				& WEC3 & 3.38 & 3.38 & 4.60 & 4.08 & \gr{13.01} & \f{3.69} & \gr{39.42} & \f{3.69} & \gr{0.77} & 3.94 & \gr{60.46}\\
				& IH & 0.00 & 0.61 & 0.56 & 0.03 & \gr{975.49} & \f{0.00} & \gr{1460.60} & \f{0.00} & \gr{17.45} & 112.59 & \gr{40.99} \\
				& EB1 & 0.00 & 2.36 & \f{2.21} & \f{2.21} & \gr{3.10}& \f{2.21} & \gr{5.32}& \f{2.21} & \gr{0.53}& \f{2.21} & \gr{1.93} \\
				& NN16 & 0.16 & 0.71 & 0.90 & 0.85 & \gr{2.78} & 0.76 & \gr{7.04} & \f{0.54} & \gr{1.29} & 0.90 & \gr{50.30} \\ \hdashline
				\multirow{8}{*}{$P_2$} 
				& AC3 & 1.01 & 1.09 & 1.14 & 1.10 & \gr{0.82} & \f{1.04} & \gr{3.02} & 1.08 & \gr{0.57} & 5.69 &  \gr{8.04} \\
				& AC6 & 1.01 & 2.04 & 2.48 & 2.02 & \gr{2.28} & 1.27 & \gr{7.81} & \f{1.12} & \gr{0.21}  & 1.13 &  \gr{29.06} \\
				& AC17 & 0.68 & 3.51 & 1.87 & 1.87 & \gr{0.34} & 1.85 & \gr{1.27} & \f{1.44} & \gr{0.10} & \f{1.44} & \gr{0.59} \\
				& HE2 & 0.63 & 1.42 & 2.40 & 2.40 & \gr{0.39} & 2.38 & \gr{1.13} & \f{2.36} & \gr{0.16}& 2.90 & \gr{22.58} \\
				& REA2 & 0.00 & 0.23 & 0.18 & 0.18 & \gr{0.38} & 0.15 & \gr{1.27} & \f{0.11} & \gr{0.73} & 0.18 & \gr{8.61} \\
				& DIS3 & 0.71 & 0.91 & 1.04 & 1.01 & \gr{1.30} & 0.96 & \gr{4.73} & \f{0.76} & \gr{1.76} & 1.04 & \gr{13.33} \\
				& WEC1 & 2.33 & 2.33 & 4.39 & 3.69 & \gr{13.07} & \f{3.25} & \gr{39.55}& \f{3.25} & \gr{0.64} & 3.39 & \gr{23.91} \\
				& NN4 & 1.01 & 1.01 & 1.57 & 1.49 & \gr{0.42} & 1.20 & \gr{1.58} & \f{1.00} & \gr{0.47}& 1.02 & \gr{20.27} \\
				\bottomrule
			\end{tabular}
		\end{center}
	\end{table}

	\section{Robust Output-Feedback $H_\infty$-Design}\label{RS::sec::rs}
	
	This section deals with robust dynamic output-feedback controller synthesis, which is closely related to static output-feedback design as discussed earlier in terms of
	reasons for non-convexity. Due to the importance of the underlying design problem, we provide the details for the corresponding dual iteration. Thereby, we focus on a performance criterion in terms of the energy gain as in Section~\ref{SHI::sec::shi}, since this enables to use the elimination lemma \ref{RS::lem::elimination} throughout. In particular, we demonstrate that both the static and the robust design are dealt within a common synthesis framework based on linear fractional representations. We briefly demonstrate later on that this framework even encompasses the design of robust gain-scheduling controllers.
	
	\subsection{Analysis}\label{RS::sec::ana}
	
	For some real matrices of appropriate dimensions and an initial condition $x(0) \in \R^n$, we consider the feedback interconnection 
	\begin{equation}
		\arraycolsep=3pt
		\mat{c}{\dot x(t) \\ \hline z(t) \\ e(t)} = \mat{c|cc}{A & B_1 & B_2 \\ \hline C_1 & D_{11} & D_{12} \\ C_2 & D_{21} & D_{22}} \mat{c}{x(t) \\ \hline w(t) \\ d(t)}, \quad
		w(t) = \Del(t) z(t), 
		\label{RS::eq::sys}
	\end{equation}
	for $t\geq 0$; here, $d\in L_2$ is a generalized disturbance, $e$ is the performance output desired to be small, $w, z$ are interconnection variables and $\Del$ is a time-varying uncertainty contained in the set
	\begin{equation*}
		\Delf(\Vb) := \{ \Del: [0, \infty) \to  \Vb~|~ \Del \text{ is piecewise continuous} \}
	\end{equation*}
	for some known compact value set $\Vb \subset \R^{q\times p}$. In particular, we do not make any assumptions on the rate of variation of the uncertainty $\Del$. The description \eqref{RS::eq::sys} is called linear fractional representation (LFR) as closing the loop involving the signals $z$ and $w$ leads to a linear parameter-varying system 
	where $\Del$ enters in a rational fashion \cite{ZhoDoy96, SchWei00, Hof16}.
	%
	
	\begin{definition}
		\label{RS::def::stab}
		%
		The system \eqref{RS::eq::sys} is said to be robustly stable if $\det(I -  D_{11}\Del) \neq 0$ for all $\Del \in \Vb$ and if there exist constants $M, \la > 0$ such that
		\begin{equation*}
			\|x(t)\| \leq Me^{-\la t} \|x(0)\| \text{ ~~~for all } t \geq 0, 
			\text{ ~all }\Del\in \Delf(\Vb),
			\text{ ~all initial conditions }x(0) \in \R^n
			\text{ ~and for }d = 0.
		\end{equation*}
		%
		It is said to admit a robust energy gain smaller than $\ga>0$ if it is robustly stable and there exists an $\eps > 0$ such that
		\begin{equation*}
			\|e\|_{L_2}^2 \leq (\ga^2 - \eps) \|d\|_{L_2}^2
			\text{ ~for all }d \in L_2,
			\text{ ~all } \Del \in \Delf(\Vb)
			\text{ ~and for }x(0) = 0.
		\end{equation*}
		The infimum of all such values $\ga > 0$ is the system's robust energy gain.
	\end{definition}
	\vspace{1ex}
	
	As we are facing arbitrarily time-varying uncertainties, we employ the following consequence of the analysis result from \cite{Sch01} that relies on the full block S-procedure. It can also be viewed as a special case of the IQC result in \cite{MegRan97} with a static multiplier.
	
	\begin{lemma}
		\label{RS::lem::stab}
		Let $P_\ga := \smat{I & 0 \\ 0 & -\ga^2 I}$ and let $G_{ij} := [A, B_j, C_i, D_{ij}]$ be the transfer matrices corresponding to \eqref{RS::eq::sys}.
		Further, let $\Pb(\Vb)$ be a set of symmetric nonsingular matrices with LMI representation\footnote{This means that there exist affine matrix-valued functions $\Psi$ and $\Phi$ such that $\Pb(\Vb) = \{\Psi(\nu)~|~ \nu \in \R^\bullet \text{ ~~and~~ } \Phi(\nu) \cg 0 \}$.} such that any $P \in \Pb(\Vb)$ satisfies
		\begin{equation}
			\mat{c}{0 \\ I}^T P \mat{c}{0 \\ I} \cle 0
			\teq{ and }\mat{c}{I \\ \Del}^T P \mat{c}{I \\ \Del} \cge 0
			\text{ ~~ for all~~ }\Del \in \Vb.
			%
			\label{RS::eq::multiplier_set}
		\end{equation}
		Then the system \eqref{RS::eq::sys} admits a robust energy gain smaller than $\ga > 0$ if there exist matrices $X$ and $P \in \Pb(\Vb)$ which satisfy
		\begin{equation}
			\label{RS::lem::lmi_stab}
			X \cg 0 \teq{ and }
			\Ls\left(\mat{cc}{0 & X \\ X & 0}, \mat{c|c}{P & 0 \\ \hline 0 & P_\ga}, \mat{cc}{G_{11} & G_{12} \\ I & 0 \\ \hline G_{21} & G_{22} \\ 0 & I}_\ss \right) \cl 0.
		\end{equation}
	\end{lemma}

	Here, the matrix $P$ is usually referred to as a multiplier and $\Pb(\Vb)$,  accordingly, as the set of multipliers. The latter set should always be chosen as large as possible and hence describe $\Vb$ as good as possible in terms of quadratic inequalities. 
	As an example, suppose that $\Vb$ equals $\mathrm{co}\{\Del_1, \dots, \Del_N\}$, the convex hull of some given generators $\Del_1, \dots, \Del_N$. Then
	\begin{equation}
		\label{RS::eq::multiplier_set_for_ch}
		\Pb(\Vb) := \left\{P=P^T ~\middle| ~\mat{c}{0 \\ I}^TP \mat{c}{0\\ I}\cl 0 \text{ ~~and~~ }\mat{c}{I \\ \Del_i}^TP\mat{c}{I \\ \Del_i}\cg 0 \text{ ~~for all~~ }i = 1,\dots, N \right\}
	\end{equation}
	is a set of multipliers which, indeed, satisfies \eqref{RS::eq::multiplier_set}
	and has an LMI representation. Moreover, note that any $P \in \Pb(\Vb)$ is nonsingular as a consequence of the minimax theorem of Courant and Fischer\cite{HorJoh90}. 
	%
	%
	As another example let us suppose that $\Vb := \{ vI~:~ v \in [a, b] \}$ for some $a < b$. Then it is possible to employ the above set of multipliers for $\Vb$ as well or the commonly used alternative
	\begin{equation}
		\label{RS::eq::multiplier_set_for_int}
		\Pb(\Vb) := \left\{  \mat{cc}{bI & -I \\ -aI & I}^T \mat{cc}{0 & H^T \\ H & 0}\mat{cc}{bI & -I \\ -aI & I}~\middle| ~ H + H^T \cg 0 \right\},
	\end{equation}
	which is closely related to the set of so-called D-G scalings. 
	Several additional examples can be found, e.g., in \cite{SchWei00} and a detailed summary in the context of IQCs is available in \cite{Vee15}. In particular, note that the LMIs \eqref{RS::lem::lmi_stab} also imply a bounded robust energy gain 
	in the case that $\Del$ in \eqref{RS::eq::sys} is a time-varying static nonlinear uncertainty, if the symmetric matrix $P$ satisfies
	\begin{equation*}
		\mat{c}{z \\ \Del(t, z)}^T P \mat{c}{z \\ \Del(t, z)} \cge 0
		\teq{ for all }t\geq 0
		\teq{ and all } z \in \R^p.
	\end{equation*}

	Finally, note that a multiplier set is not required to satisfy the first LMI in \eqref{RS::eq::multiplier_set} for concluding a bounded robust energy gain via Lemma \ref{RS::lem::stab}. This constraint, which holds for many common choices such as the two given above, is only included to simplify the exposition. Otherwise, we can assure by other means that the employed multipliers have the correct amount of negative and positive eigenvalues in several of the next results.
	


	In the sequel we will also need the dual multiplier set corresponding to a given set $\Pb(\Vb)$. The latter is defined as
	\begin{equation*}
		\t \Pb(\Vb) := \{\t P~|~ \t P^{-1} \in \Pb(\Vb) \}
		\teq{ if it has an LMI representation.}
	\end{equation*}
	Note that the set $\{\t P~|~ \t P^{-1} \in \Pb(\Vb) \}$ does not have an LMI representation for any set $\Pb(\Vb)$, but in most practical situations it does. 
	For the two previous examples \eqref{RS::eq::multiplier_set_for_ch} and \eqref{RS::eq::multiplier_set_for_int}, the dual sets are explicitly given as
	\begin{equation*}
		\t \Pb(\Vb) := \left\{\t P=\t P^T ~\middle| ~\mat{c}{I \\ 0}^T\t P \mat{c}{I\\ 0}\cg 0 \text{ ~~and~~ }\mat{c}{-\Del_i^T \\ I}^T\t P\mat{c}{-\Del_i^T \\ I}\cl 0 \text{ ~~for all~~ }i = 1,\dots, N \right\}
	\end{equation*}
	and
	\begin{equation*}
		\t \Pb(\Vb) := \left\{  \frac{1}{(b-a)^2}\mat{cc}{I & I \\ aI & bI} \mat{cc}{0 & H \\ H^T & 0}\mat{cc}{I & I \\ aI & bI}^T~\middle| ~ H + H^T \cg 0 \right\},
	\end{equation*}
	respectively. 

	\subsection{Synthesis}\label{RS::sec::synth}
	
	\subsubsection{Problem Description}

	For fixed real matrices of appropriate dimensions and some initial condition $x(0) \in \R^n$, we consider the feedback interconnection
	\begin{equation}
		\arraycolsep=1pt
		\mat{c}{\dot x(t) \\\hline  z(t)\\ e(t) \\ y(t)} 
		= \mat{c|ccc}{A & B_1 & B_2 & B_3 \\ \hline C_1 & D_{11} & D_{12} & D_{13} \\ C_2 & D_{21} & D_{22} & D_{23} \\
			C_3 & D_{31} & D_{32} & 0} 
		\mat{c}{x(t) \\\hline w(t) \\ d(t) \\ u(t)}, \quad
		w(t) = \Del(t) z(t)
		\label{RS::eq::sys_of}
	\end{equation}
	for $t\geq 0$; here, $u$ is the control input, $y$ is the measured output, $\Del\in \Delf(\Vb)$ is some uncertainty and $\Vb \subset \R^{q\times p}$ is a compact value set. Further, suppose that we are given a multiplier set $\Pb(\Vb)$ corresponding to $\Vb$ as well as its dual multiplier set $\t \Pb(\Vb)$ as defined in the previous subsection.
	%
	%
	%
	Our main goal is the design of a robust dynamic output-feedback controller with the description
	\begin{equation}
		\mat{c}{\dot x_c(t) \\ u(t)} 
		= \mat{cc}{A^c &  B^c \\  C^c & D^c } \mat{c}{x_c(t) \\ y(t)}
		\label{RS::eq::con_of2}
	\end{equation}
	for the system \eqref{RS::eq::sys_of} such that the corresponding closed-loop robust energy gain is as small as possible. The latter closed-loop interconnection is described by
	\begin{equation}
		\arraycolsep=2pt
		\mat{c}{\dot x_\mathrm{cl}(t) \\\hline  z(t) \\ e(t)} 
		= \mat{c|cc}{\Ac & \Bc_1 & \Bc_2 \\ \hline 
			\Cc_1 & \Dc_{11} & \Dc_{12} \\ 
			\Cc_2 & \Dc_{21} & \Dc_{22}} 
		\mat{c}{x_\mathrm{cl}(t) \\\hline w(t) \\ d(t)}, \quad
		w(t) = \Del(t) z(t)
		\label{RS::eq::cl_of}
	\end{equation}
	with $t\geq 0$ as well as $x_\mathrm{cl} = \smat{x\\ x_c}$ and standard calligraphic closed-loop matrices. From the analysis criteria in Lemma \ref{RS::lem::stab} and by applying the elimination lemma \ref{RS::lem::elimination}, we immediately obtain the following synthesis result. 
	
	\begin{theorem}
		\label{RS::theo::of}
		Let $G_{ij}$ be the transfer matrices corresponding to \eqref{RS::eq::sys_of}. Further, let $V$ and $U$ be a basis matrices of $\ker(C_3, D_{31}, D_{32})$ and $\ker(B_3^T, D_{13}^T, D_{23}^T)$, respectively. Then there exists a controller \eqref{RS::eq::con_of2} for the system \eqref{RS::eq::sys_of} such that the analysis LMIs \eqref{RS::lem::lmi_stab} are feasible for \eqref{RS::eq::cl_of} if and only if there exist symmetric matrices $X$, $Y$ and $P\in \Pb(\Vb)$ satisfying 
		\begin{subequations}
			\label{RS::theo::eq::lmi_of}
			\begin{equation}
				\arraycolsep=1pt
				\mat{cc}{X & I \\ I & Y} \cg 0, ~~
				V^T\Ls\left(\mat{cc}{0 & X \\ X & 0}, \mat{c|c}{P & 0 \\ \hline 0 & P_\ga}, \mat{cc}{G_{11} & G_{12} \\ I & 0\\ \hline
					G_{21} & G_{22} \\ 0 & I
				}_{\ss} \right) V \cl 0 
				\text{ ~~and~~ }
				U^T\Ls\left(\mat{cc}{0 & Y \\ Y & 0}, \mat{c|c}{P^{-1} & 0 \\ \hline 0 & P_\ga^{-1}}, \mat{cc}{I & 0 \\ -G_{11}^\ast & -G_{21}^\ast \\ \hline
					0 & I \\
					-G_{12}^\ast & -G_{22}^\ast
				}_{\ss} \right) U \cg 0.
				\tlabel{RS::theo::eq::lmi_ofa}{RS::theo::eq::lmi_ofb}{RS::theo::eq::lmi_ofc}
			\end{equation}
		\end{subequations}
		In particular, the infimal $\ga > 0$ such that there exist symmetric $X, Y$ and $P \in \Pb(\Vb)$ satisfying the above inequalities is equal to
		\begin{equation*}
			\ga_\opt := \inf\left\{ \ga > 0~\middle| 
			\text{ There is a controller \eqref{RS::eq::con_of2} s.th. the analysis
				LMIs \eqref{RS::lem::lmi_stab} are feasible for \eqref{RS::eq::cl_of}}\right\}. 
			%
		\end{equation*}
	\end{theorem}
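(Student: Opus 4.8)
The plan is to apply the closed-loop analysis criterion of Lemma~\ref{RS::lem::stab} to the interconnection \eqref{RS::eq::cl_of} and then to remove the controller parameters by the elimination lemma~\ref{RS::lem::elimination}, exactly as for the nominal dynamic output-feedback result recalled in Theorem~\ref{SHI::theo::gs}; the present statement is its robust generalization with the extra multiplier channel $P$ accounting for $\Del$. The key structural observation is that, for a full-order controller \eqref{RS::eq::con_of2} with gain $K = \smat{A^c & B^c \\ C^c & D^c}$, the augmented closed-loop matrices in \eqref{RS::eq::cl_of} depend \emph{affinely} on $K$. Hence the closed-loop inequality \eqref{RS::lem::lmi_stab}, written out in state space with a (to-be-found) closed-loop certificate $\mathcal{X} \cg 0$ and middle matrix $\Theta = \smat{\smat{0 & \mathcal{X} \\ \mathcal{X} & 0} & 0 \\ 0 & \smat{P & 0 \\ 0 & P_\ga}}$, becomes a quadratic matrix inequality of the form $\Psi(\mathcal{X},P) + \mathcal{U}^T K \mathcal{V} + \mathcal{V}^T K^T \mathcal{U} \cl 0$ in the unknown $K$, where $\mathcal{U}$ and $\mathcal{V}$ collect the coefficients of the control-input and measured-output channels.

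First I would fix the controller state dimension equal to that of the plant and partition the augmented certificate and its inverse as $\mathcal{X} = \smat{X & \bullet \\ \bullet & \bullet}$ and $\mathcal{X}^{-1} = \smat{Y & \bullet \\ \bullet & \bullet}$. The decisive bookkeeping is that the rows and columns of $\mathcal{U}$ and $\mathcal{V}$ attached to the controller state are free — they carry identity blocks — so that, modulo these trivial directions, the annihilators associated with $\mathcal{U}$ and $\mathcal{V}$ reduce precisely to the bases $U$ of $\ker(B_3^T, D_{13}^T, D_{23}^T)$ and $V$ of $\ker(C_3, D_{31}, D_{32})$ from the statement. Applying the elimination lemma~\ref{RS::lem::elimination} then yields two projected inequalities: the one carried by $\mathcal{V}$ depends on $\mathcal{X}$ only through its leading block $X$ and is exactly \eqref{RS::theo::eq::lmi_ofb}, whereas the one carried by $\mathcal{U}$ involves $\Theta\mathcal{U}$ and, after a congruence by $\mathcal{X}^{-1}$, depends on $\mathcal{X}$ only through the leading block $Y$ of $\mathcal{X}^{-1}$.

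To bring the second inequality into the advertised form \eqref{RS::theo::eq::lmi_ofc}, I would invoke the dualization lemma~\ref{RS::lem::dualization}, which replaces the middle matrices $\smat{0 & \mathcal{X} \\ \mathcal{X} & 0}$ and $\smat{P & 0 \\ 0 & P_\ga}$ by their inverses and the primal outer factor by the adjoint blocks $-G_{ij}^\ast$; this is precisely where the entries $-G_{ij}^\ast$, $P^{-1}$ and $P_\ga^{-1}$ originate, and the same $P \in \Pb(\Vb)$ consequently appears in both inequalities. Finally, the coupling constraint \eqref{RS::theo::eq::lmi_ofa}, $\smat{X & I \\ I & Y} \cg 0$, is exactly the solvability condition for completing the two prescribed leading blocks $X$ of $\mathcal{X}$ and $Y$ of $\mathcal{X}^{-1}$ to a single positive-definite $\mathcal{X}$ of the augmented dimension (via a factorization $I - XY = \bullet\,\bullet^T$ and a Schur-complement argument, where the full order makes the attendant rank condition automatic). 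Reading the elimination lemma in the forward direction yields necessity and in the reverse direction sufficiency, and the characterization of $\ga_\opt$ follows at once since feasibility of all three conditions is monotone in $\ga$.

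The main obstacle I anticipate is the careful dimensional bookkeeping in the elimination step: one must verify that the controller-state directions in $\mathcal{U}$ and $\mathcal{V}$ are genuinely annihilated and drop out, so that the two projected inequalities collapse onto $X$ and $Y$ alone rather than onto the full $\mathcal{X}$, and — for sufficiency — that a compatible $\mathcal{X} \cg 0$ with the prescribed leading blocks can be reconstructed from \eqref{RS::theo::eq::lmi_ofa} and the quadratic inequality in $K$ subsequently resolved to recover an actual controller gain. The adjoint and dualization accounting (tracking the signs and transposes that turn the $G_{ij}$ into $-G_{ij}^\ast$ and send $P, P_\ga$ to $P^{-1}, P_\ga^{-1}$) is routine but error-prone, and is where I would be most careful.
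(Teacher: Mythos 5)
Your proposal is correct and matches the paper's own route: the paper obtains Theorem \ref{RS::theo::of} precisely by applying the closed-loop analysis criterion of Lemma \ref{RS::lem::stab} to \eqref{RS::eq::cl_of} and then eliminating the affinely entering controller parameters via Lemma \ref{RS::lem::elimination}, with the coupling condition \eqref{RS::theo::eq::lmi_ofa} arising from the standard completion of the prescribed leading blocks of $\mathcal{X}$ and $\mathcal{X}^{-1}$. The details you spell out (annihilator reduction to $U$, $V$, the dualized form of the second inequality, reconstruction of $\mathcal{X}$ and of the controller) are exactly the steps the paper treats as immediate; note only that the paper's version of the elimination lemma already delivers the dual inequality with $P^{-1}$, $P_\ga^{-1}$ and the adjoint outer factors, so no separate invocation of Lemma \ref{RS::lem::dualization} is strictly needed.
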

	
	Similarly as in the previous section, note that $\ga_\opt$ is \emph{not} the optimal robust energy gain achievable by robust controllers with description \eqref{RS::eq::con_of2}. This is due to the conservatism in the employed analysis result Lemma \ref{RS::lem::stab}. 
	
	In contrast to static output-feedback design as considered in Section \ref{SHI::sec::shi}, non-convexity emerges through the multiplier $P$ and its inverse appearing in \eqref{RS::theo::eq::lmi_ofb} and \eqref{RS::theo::eq::lmi_ofc} instead of the Lyapunov certificate $X$ and its inverse. Due to this non-convexity, computing $\ga_\opt$ or a corresponding controller is difficult in general. Subsequently, we modify the dual iteration in order to compute upper bounds on $\ga_\opt$ and, in particular, solve the robust output-feedback $H_\infty$-design problem
	still with performance guarantees.

	\subsubsection{Dual Iteration: Initialization}
	
	In order to initialize the dual iteration, we aim again to compute a meaningful lower bound on $\ga_\opt$. 
	Now the lower bound is obtained by considering the design of
	a gain-scheduling controller\cite{Pac94}. Such a controller is taking online measurements 
	of the uncertainty $\Del(t)$ into account and given by
	\begin{equation}
		\mat{c}{\dot x_c(t) \\ \hline z_c(t) \\ u(t)} 
		= \mat{c|cc}{\h A^c & \h B^c_1 & \h B^c_2 \\ \hline  
			\h C^c_1 & \h D^c_{11} & \h D^c_{12} \\ 
			\h C^c_2 & \h D^c_{21} & \h D^c_{22}} \mat{c}{x_c(t) \\\hline w_c(t) \\ y(t)},\quad
		w_c(t) = S(\Del(t)) z_c(t)
		\label{RS::eq::con_of3}
	\end{equation}
	for $t \geq 0$ and with some function $S$. 
	If there exists a robust controller for the system \eqref{RS::eq::sys_of} achieving a robust energy gain of $\ga$, there also exists a gain-scheduling controller
	which achieves (at least) the same robust energy gain.
	This just follows from the observation that 
	any robust controller can be expressed as \eqref{RS::eq::con_of3} with 
	\begin{equation*}
		\mat{c|c}{\h A^c & \h B^c_2 \\ \hline  
			\h C^c_2 & \h D^c_{22}}
		= \mat{c|c}{A^c & B^c  \\ \hline  C^c & D^c}, \quad
		\mat{c}{\h B^c_1 \\ \h D^c_{21}} = 0, \quad
		\mat{cccc}{\h C^c_1 & \h D^c_{11} & \h D^c_{12}} = 0
		\teq{ and }
		S(\Del) = 0
		\teq{ for all } \Del \in \Vb.
	\end{equation*}
	%
	%
	In our setup, the problem of finding such a gain-scheduling controller \eqref{RS::eq::con_of3} for the system \eqref{RS::eq::sys_of} can be turned into a convex optimization problem; the design of structured gain-scheduling controllers, e.g., with $(\h D^c_{11}, \h D^c_{12}) = 0$ and $\h D^c_{21} = 0$ would yield even better lower bounds but, unfortunately, seems to be a non-convex problem without additional structural properties of the underlying system \eqref{RS::eq::sys_of}. For unstructured gain-scheduling controller design we have the following result which is essentially taken from \cite{Sch00, Sch01}.

	\begin{theorem}
		\label{RS::theo::gs}
		Let $G_{ij}$, $U$ and $V$ be as in Theorem \ref{RS::theo::of}. Then there exists a gain-scheduling controller \eqref{RS::eq::con_of3} and a scheduling function $S$ for the system \eqref{RS::eq::sys_of} such that the analysis LMIs \eqref{RS::lem::lmi_stab} are feasible for the resulting corresponding closed-loop system, for the value set $\Vb_e = \{\diag(\Del, S(\Del))~|~ \Del \in \Vb\}$ and for a corresponding multiplier set $\Pb_e(\Vb_e)$ if and only if there exist symmetric matrices $X, Y$ and $P\in \Pb(\Vb)$, $\t P\in \t \Pb(\Vb)$ satisfying the synthesis LMIs \eqref{RS::theo::eq::lmi_of} with $P^{-1}$ in \eqref{RS::theo::eq::lmi_ofc} replaced by $\t P$.
		In particular, we have
		$\gaoptgs \leq \ga_\opt$
		for $\gaoptgs$ being the infimal $\ga > 0$ such that the latter LMIs are feasible.
	\end{theorem}
	
	We do not specify the multiplier set $\Pb_e(\Vb_e)$ since it is not relevant for our purposes and because we are only interested in the lower bound $\ga_\mathrm{gs}$; it can be constructed similarly as in \cite{Sch00}.
	%
	%
	Again, such lower bounds can be good indicators for measuring the conservatism of the upper bounds that are generated by our algorithms. 
	
	%
	%
	\vspace{1ex}

	As in both previous sections, the dual iteration is initialized by the design of a suitable full-information controller. For robust synthesis, such a controller is of the form $u = F\t y = (F_1, F_2, F_3)\t y$ with $\t y := (x^T, w^T, d^T)^T$. Hence, these controllers are even able to measure the uncertain signal $w = \Del z$ in addition to the state $x$ and the disturbance $d$.
	Synthesizing such controllers is not difficult. Indeed, an application of the elimination lemma \ref{RS::lem::elimination} leads to the following result.
	
	\begin{lemma}
		\label{RS::lem::full_info}
		There exists some full-information gain $F$ such that the analysis LMIs \eqref{RS::lem::lmi_stab} are feasible for the system 
		\begin{equation}
			\mat{c}{\dot x(t) \\ \hline z(t) \\ e(t)}
			= \mat{c|cc}{A + B_3 F_1 & B_1 + B_3F_2 & B_2 + B_3F_3 \\ \hline 
				C_1 + D_{13}F_1 & D_{11} + D_{13}F_2 & D_{12} + D_{13}F_3 \\
				C_2 + D_{23}F_1 & D_{21} + D_{23}F_2 & D_{22} + D_{23}F_3}
			\mat{c}{\dot x(t) \\ \hline w(t) \\ d(t)}
			= \left(\mat{c|cc}{A & B_1 & B_2 \\ \hline 
				C_1 & D_{11}  & D_{12} \\
				C_2  & D_{21}  & D_{22} }
			+ \mat{c}{B_3 \\ \hline D_{13} \\ D_{23}}F
			\right)
			\mat{c}{\dot x(t) \\ \hline w(t) \\ d(t)}
			\label{RS::eq::clF}
		\end{equation}
		if and only if there exist symmetric $\t P \in \t \Pb(\Vb)$ and $Y \cg 0$ satisfying \eqref{RS::theo::eq::lmi_ofc} with $P^{-1}$ replaced by $\t P$.
	\end{lemma}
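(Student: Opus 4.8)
The plan is to follow verbatim the pattern used for the non-robust full-information result Lemma~\ref{SHI::lem::full_info}, now carrying the multiplier $P$ along: I would write the analysis inequality \eqref{RS::lem::lmi_stab} for the full-information closed loop \eqref{RS::eq::clF} and then eliminate the gain $F$ with the elimination lemma \ref{RS::lem::elimination}. First I would record that every closed-loop datum in \eqref{RS::eq::clF} is affine in $F=(F_1,F_2,F_3)$; indeed the stacked data equal $\smat{A & B_1 & B_2 \\ C_1 & D_{11} & D_{12} \\ C_2 & D_{21} & D_{22}}+\smat{B_3 \\ D_{13} \\ D_{23}}F$. Consequently, inside the $\Ls(\cdot,\cdot,\cdot)$-term of \eqref{RS::lem::lmi_stab} the gain modifies the outer factor only through the single image direction $\smat{B_3 \\ D_{13} \\ D_{23}}$, whereas its action on the virtual signal $\t y=(x,w,d)$ is entirely unconstrained, the controller being full information.

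Next I would invoke Lemma~\ref{RS::lem::elimination}. Because the only restriction on how $F$ can reshape the outer factor is the image constraint through $\smat{B_3 \\ D_{13} \\ D_{23}}$, eliminating $F$ collapses the feasibility of \eqref{RS::lem::lmi_stab} for \eqref{RS::eq::clF} — over $X\cg 0$, $P\in\Pb(\Vb)$ and $F$ — to a single inequality of annihilator type, governed by a basis matrix $U$ of $\ker(B_3^T,D_{13}^T,D_{23}^T)$ exactly as in Theorem~\ref{RS::theo::of}, and expressed in the certificate $X$ together with the multiplier $P$. No second, measurement-side inequality survives, as it would for genuine output feedback in Theorem~\ref{RS::theo::of}, precisely because $\t y$ is complete.

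Finally I would dualize. An application of the dualization lemma~\ref{RS::lem::dualization} — legitimate since every $P\in\Pb(\Vb)$ is nonsingular and satisfies $\smat{0 \\ I}^TP\smat{0 \\ I}\cle 0$ by \eqref{RS::eq::multiplier_set}, so that the weights $\smat{0 & X \\ X & 0}$ and $\smat{P & 0 \\ 0 & P_\ga}$ carry the inertia required by that lemma — converts the $U$-annihilator inequality into precisely \eqref{RS::theo::eq::lmi_gsc} for $Y:=X^{-1}\cg 0$ and $\t P:=P^{-1}$, the latter lying in $\t\Pb(\Vb)$ by the very definition of the dual multiplier set. Together with the equivalence $X\cg 0\Leftrightarrow Y\cg 0$ this yields the forward implication; the converse is obtained by reading every step backwards, reconstructing an admissible $F$ from the eliminated inequality via the constructive part of Lemma~\ref{RS::lem::elimination}.

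The hard part is that $F$ enters \eqref{RS::lem::lmi_stab} genuinely quadratically: sitting inside the congruence that defines the $\Ls(\cdot,\cdot,\cdot)$-term, it contributes a nonvanishing curvature term built from $\smat{B_3 \\ D_{13} \\ D_{23}}$ and the indefinite weights, so the naive affine projection lemma does not apply directly. It is exactly the inertia of $\smat{P & 0 \\ 0 & P_\ga}$ and $\smat{0 & X \\ X & 0}$ — inherited from the multiplier axioms \eqref{RS::eq::multiplier_set} and from $X\cg 0$ — that lets Lemma~\ref{RS::lem::elimination} absorb this curvature and return a convex condition, and that renders the subsequent inversions $X\mapsto X^{-1}$, $P\mapsto P^{-1}$ well defined. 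Keeping scrupulous track of which multiplier set ($\Pb(\Vb)$ versus $\t\Pb(\Vb)$) each inequality lives in is the remaining bookkeeping subtlety.
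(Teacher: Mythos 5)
Your proposal is correct and takes essentially the same route as the paper, which obtains this lemma as a direct application of the elimination lemma \ref{RS::lem::elimination}: since the full-information gain $F$ acts on the whole signal $(x,w,d)$, its right factor is the identity, so the primal annihilator condition is vacuous and only the dual $U$-condition survives, which is exactly \eqref{RS::theo::eq::lmi_gsc} with $Y := X^{-1} \cg 0$ and $\t P := P^{-1} \in \t\Pb(\Vb)$. The only (harmless) redundancy is your separate dualization step: the second condition in Lemma \ref{RS::lem::elimination} is already stated in terms of the inverse $P^{-1}$ of the outer matrix, i.e., in dual form, so elimination alone delivers \eqref{RS::theo::eq::lmi_gsc} without a further appeal to Lemma \ref{RS::lem::dualization}.
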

	

	\subsubsection{Dual Iteration}
	
	Suppose that we have synthesized a full-information gain $F$ by Lemma \ref{RS::lem::full_info}. Then the primal synthesis LMIs corresponding to the gain $F$ and to the analysis LMIs \eqref{RS::lem::lmi_stab} are obtained in a straightforward fashion.
	
	\begin{theorem}
		\label{RS::theo::ofF}
		Let $G_{ij}$ and $V$ be as in Theorem \ref{RS::theo::of} and let $G_{ij}^F$ denote the transfer matrices corresponding to \eqref{RS::eq::clF}. Then there exists a controller \eqref{RS::eq::con_of2} for the system \eqref{RS::eq::sys_of} such that the analysis LMIs \eqref{RS::lem::lmi_stab} are feasible for the corresponding closed-loop system if there exist symmetric matrices $X$, $Y$ and $P\in \Pb(\Vb)$ satisfying
		\begin{subequations}
			\label{RS::theo::eq::lmi_ofF}
			\begin{equation}
				\arraycolsep=3pt
				\mat{cc}{X & Y \\ Y & Y} \cg 0,\quad
				V^T\Ls\left(\mat{cc}{0 & X \\ X & 0}, \mat{c|c}{P & 0 \\ \hline 0 & P_\ga}, \mat{cc}{G_{11} & G_{12} \\ I & 0 \\ \hline
					G_{21} & G_{22} \\ 0 & I
				}_{\ss} \right) V \cl 0
				\teq{ and }
				\Ls\left(\mat{cc}{0 & Y \\ Y & 0}, \mat{c|c}{P & 0 \\ \hline 0 & P_\ga}, \mat{cc}{G_{11}^F & G_{12}^F \\ I & 0\\ \hline
					G_{21}^F & G_{22}^F \\
					0 & I
				}_{\ss} \right) \cl 0.
				\tlabel{RS::theo::eq::lmi_ofFa}{RS::theo::eq::lmi_ofFb}{RS::theo::eq::lmi_ofFc}
			\end{equation}
		\end{subequations}
		Moreover, we have 
		$\gaoptgs \leq \ga_\opt \leq \ga_F$
		for $\ga_F$ being the infimal $\ga > 0$ such that the above LMIs are feasible.
	\end{theorem}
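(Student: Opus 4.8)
The plan is to reduce the claim to the synthesis characterization of Theorem \ref{RS::theo::of}: from the data $(X, Y, P)$ satisfying \eqref{RS::theo::eq::lmi_ofF} I will exhibit a certificate pair and a multiplier satisfying \eqref{RS::theo::eq::lmi_ofa}--\eqref{RS::theo::eq::lmi_ofc}, and then simply invoke Theorem \ref{RS::theo::of} to obtain the controller \eqref{RS::eq::con_of2}. Concretely, I intend to apply Theorem \ref{RS::theo::of} with the pair $(X, Y^{-1})$ and the multiplier $P$, so that its dual inequality \eqref{RS::theo::eq::lmi_ofc} is read off with $Y^{-1}$ and $P^{-1}$. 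Since the statement is only a sufficient condition, I need just this forward direction. The bound $\gaoptgs \leq \ga_\opt \leq \ga_F$ then follows immediately: the left inequality is Theorem \ref{RS::theo::gs}, and because feasibility of \eqref{RS::theo::eq::lmi_ofF} at level $\ga$ yields feasibility of \eqref{RS::theo::eq::lmi_of} at the same level, every $\ga$ admissible for the primal synthesis LMIs dominates $\ga_\opt$, whence $\ga_F \geq \ga_\opt$.

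The first two conditions are essentially free. From the coupling LMI \eqref{RS::theo::eq::lmi_ofFa} I read off $Y \cg 0$ (a principal block of a positive definite matrix), and the congruence $\smat{X & Y \\ Y & Y} = \smat{I & 0 \\ 0 & Y}\smat{X & I \\ I & Y^{-1}}\smat{I & 0 \\ 0 & Y}$, together with the nonsingularity of $\diag(I, Y)$, shows that $\smat{X & Y \\ Y & Y} \cg 0$ is equivalent to $\smat{X & I \\ I & Y^{-1}} \cg 0$, which is exactly \eqref{RS::theo::eq::lmi_ofa} for the pair $(X, Y^{-1})$. In particular $X \cg Y \cg 0$ holds automatically, so — in contrast to the static case in Theorem \ref{SHI::theo::ofF} — no separate stability hypothesis on $A + B_3 F_1$ is needed to secure positivity. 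The primal inequality \eqref{RS::theo::eq::lmi_ofFb} is literally \eqref{RS::theo::eq::lmi_ofb}, so it carries over verbatim with the same $X$ and $P$.

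The substantive step is to derive the dual inequality \eqref{RS::theo::eq::lmi_ofc} from the full-information analysis LMI \eqref{RS::theo::eq::lmi_ofFc}, in exact analogy with the proof of Theorem \ref{SHI::theo::ofF} but now carrying the multiplier block along. Here the full-information gain $F = (F_1, F_2, F_3)$ enters the realization of $G^F$ in \eqref{RS::eq::clF} affinely, through left multiplication by $\smat{B_3 \\ D_{13} \\ D_{23}}$ while acting on the complete signal $(x, w, d)$; hence the elimination lemma \ref{RS::lem::elimination} removes $F$ from \eqref{RS::theo::eq::lmi_ofFc} and, as in the static case, leaves a single projected inequality in dualized form — equivalently, one may first dualize via Lemma \ref{RS::lem::dualization}, turning $\smat{0 & Y \\ Y & 0}$ into $\smat{0 & Y^{-1} \\ Y^{-1} & 0}$ and $\smat{P & 0 \\ 0 & P_\ga}$ into $\smat{P^{-1} & 0 \\ 0 & P_\ga^{-1}}$, and then eliminate $F$. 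Either order produces precisely $U^T \Ls(\smat{0 & Y^{-1} \\ Y^{-1} & 0}, \smat{P^{-1} & 0 \\ 0 & P_\ga^{-1}}, \smat{I & 0 \\ -G_{11}^\ast & -G_{21}^\ast \\ 0 & I \\ -G_{12}^\ast & -G_{22}^\ast}_\ss) U \cg 0$ with $U$ a basis of $\ker(B_3^T, D_{13}^T, D_{23}^T)$, which is \eqref{RS::theo::eq::lmi_ofc} for $(Y^{-1}, P^{-1})$. I expect the main obstacle to lie exactly here: one must check that eliminating $F$ and inverting in the correct order reproduces the adjoint state-space data of the dual condition, that the annihilator of $\smat{B_3 \\ D_{13} \\ D_{23}}$ indeed coincides with the $U$ of Theorem \ref{RS::theo::of}, and that the nonsingularity and inertia hypotheses of Lemma \ref{RS::lem::dualization} are met — which they are, since $Y \cg 0$, every $P \in \Pb(\Vb)$ is nonsingular, and $P_\ga$ carries a fixed signature. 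The remaining manipulations are the same routine congruences as in the static $H_\infty$ setting, and I would suppress them.
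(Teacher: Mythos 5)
Your proposal is correct and follows essentially the same route as the paper's proof: eliminate the full-information gain $F$ from \eqref{RS::theo::eq::lmi_ofFc} (using the inertia of $P \in \Pb(\Vb)$, which makes the elimination lemma applicable and directly produces the dualized inequality \eqref{RS::theo::eq::lmi_ofc} with $Y^{-1}$ and $P^{-1}$), transform the coupling condition \eqref{RS::theo::eq::lmi_ofFa} by congruence into \eqref{RS::theo::eq::lmi_ofa} for the pair $(X, Y^{-1})$, observe that \eqref{RS::theo::eq::lmi_ofFb} is literally \eqref{RS::theo::eq::lmi_ofb}, and invoke Theorem \ref{RS::theo::of}. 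The gain bounds are handled exactly as you describe, so no gap remains.
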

	
	\begin{proof}
		Since we have $P \in \Pb(\Vb)$, we can conclude that $P$ has exactly $p$ positive and $q$ negative eigenvalues. This allows us to eliminate the full-information gain $F$ from the LMI \eqref{RS::theo::eq::lmi_ofFc} which leads to \eqref{RS::theo::eq::lmi_ofc} for $Y$ replaced by $Y^{-1}$. Finally, performing a congruence transformation of \eqref{RS::theo::eq::lmi_ofFa} with $\diag(I, Y^{-1})$ yields \eqref{RS::theo::eq::lmi_ofc} for $Y$ replaced by $Y^{-1}$. Since we have \eqref{RS::theo::eq::lmi_ofb} and $P \in \Pb(\Vb)$ by assumption, we can apply Theorem \ref{RS::theo::of} in order to construct the desired controller \eqref{RS::eq::con_of2}.
	\end{proof}
	
	The employed dual versions of Lemma \ref{RS::lem::full_info} and Theorem \ref{RS::theo::ofF} are given next.
	
	\begin{lemma}
		\label{RS::lem::full_actu}
		There exists some full-actuation gain $E$ such that the analysis LMIs \eqref{RS::lem::lmi_stab} are feasible for the system 
		\begin{equation}
			\mat{c}{\dot x(t) \\ \hline z(t) \\ e(t)}
			= \mat{c|cc}{A + E_1 C_3 & B_1 + E_1D_{31} & B_2 + E_1 D_{32} \\ \hline 
				C_1 + E_2C_3 & D_{11} + E_2D_{31} & D_{12} + E_2 D_{32} \\
				C_2 + E_3C_3 & D_{21} + E_3 D_{31} & D_{22} + E_3D_{32}}
			\mat{c}{\dot x(t) \\ \hline w(t) \\ d(t)}
			=\left(\mat{c|cc}{A  & B_1  & B_2  \\ \hline 
				C_1  & D_{11}  & D_{12}  \\
				C_2  & D_{21}  & D_{22} }
			+ E \mat{c|cc}{C_3 & D_{31} & D_{32}}
			\right)
			\mat{c}{\dot x(t) \\ \hline w(t) \\ d(t)}
			\label{RS::eq::clE}
		\end{equation}
		if and only if there exist symmetric $P\in \Pb(\Vb)$ and $X \cg 0$ satisfying \eqref{RS::theo::eq::lmi_ofb}.
	\end{lemma}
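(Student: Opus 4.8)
The plan is to establish this as the dual counterpart of Lemma~\ref{RS::lem::full_info}, by a single application of the elimination lemma~\ref{RS::lem::elimination} that now removes the full-actuation gain $E$ rather than the full-information gain $F$. First I would write down the closed-loop analysis inequality~\eqref{RS::lem::lmi_stab} for the interconnection~\eqref{RS::eq::clE}, with middle matrix $\Theta := \diag\big(\smat{0 & X \\ X & 0}, P, P_\ga\big)$. The gain $E=(E_1^T, E_2^T, E_3^T)^T$ enters the matrices of~\eqref{RS::eq::clE} only through the additive term $E(C_3, D_{31}, D_{32})$, so the outer factor of the associated $\Ls$-expression takes the form $\Phi_0 + \Pi E M$. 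Here $\Phi_0$ is the outer factor built from the open-loop transfer matrices $G_{ij}$ of~\eqref{RS::eq::sys_of}, so that $\Phi_0^T \Theta \Phi_0$ is exactly the $\Ls$-term appearing unprojected in~\eqref{RS::theo::eq::lmi_gsb}; the matrix $M=(C_3, D_{31}, D_{32})$ is the fixed measurement coefficient with $\ker M$ spanned by $V$; and $\Pi$ is the fixed selection matrix that injects $E_1, E_2, E_3$ into the state, the $z$- and the $e$-rows, leaving the top identity block and the $w$- and $d$-pass-through rows untouched. Thus $E$ appears affinely in the outer factor, with fixed right coefficient $M$ and fixed left coefficient $\Pi$.

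Next I would invoke the elimination lemma~\ref{RS::lem::elimination}. Since $P\in\Pb(\Vb)$, the requirements~\eqref{RS::eq::multiplier_set} force $P$ to have exactly $p$ positive and $q$ negative eigenvalues, so $\Theta$ has the inertia needed for the lemma to apply. Eliminating $E$ produces one condition per kernel. As $MV=0$ gives $(\Phi_0+\Pi E M)V = \Phi_0 V$ independently of $E$, the right-kernel condition reads $V^T\Phi_0^T\Theta\Phi_0 V \cl 0$, which is precisely~\eqref{RS::theo::eq::lmi_gsb}. The left-kernel condition, attached to the annihilator of $\Pi$, is automatically satisfied: full actuation means that $E$ injects into every state and every uncertainty/performance output direction, so the complementary directions picked out by this annihilator meet only the uncoupled $\smat{0 & X \\ X & 0}$ block and the pass-through sub-blocks of $P^{-1}$ and $P_\ga^{-1}$, whose signs are already fixed by~\eqref{RS::eq::multiplier_set}. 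This is the same mechanism that collapses the precisely known full-actuation result Lemma~\ref{SHI::lem::full_actu} to a single $V$-condition; and, in contrast to Lemma~\ref{RS::lem::full_info}, no dualization of the multiplier occurs, so the primal pair $(X,P)$ is kept throughout.

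For the converse I would reverse these steps. Given $X\cg 0$, $P\in\Pb(\Vb)$ and~\eqref{RS::theo::eq::lmi_gsb}, the constructive direction of the elimination lemma~\ref{RS::lem::elimination} returns an explicit gain $E$ for which $(\Phi_0+\Pi E M)^T\Theta(\Phi_0+\Pi E M)\cl 0$, that is, the analysis inequality~\eqref{RS::lem::lmi_stab} holds for~\eqref{RS::eq::clE}. Together with $X\cg 0$ this is exactly the asserted existence of a full-actuation gain.

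I expect the delicate point to be the claim that the left-kernel condition is vacuous for this structured gain. Making it rigorous amounts to showing that the annihilator of $\Pi$ selects only the top identity block together with the $w$- and $d$-pass-through rows, and that on these the relevant quadratic form collapses to sub-blocks of $P^{-1}$ and $P_\ga^{-1}$ that are sign-definite by~\eqref{RS::eq::multiplier_set}; the vanishing diagonal of the $\smat{0 & X \\ X & 0}$ block has to be absorbed through the realization (KYP) structure rather than by a naive sign argument. Tracking which multiplier blocks this annihilator touches, and confirming they carry the sign dictated by $\Pb(\Vb)$, is the one step where the structure of the multiplier set, and not pure linear algebra, is essential.
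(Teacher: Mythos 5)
Your overall route --- a single application of the elimination lemma \ref{RS::lem::elimination} that removes $E$ while keeping the primal pair $(X,P)$, with the right-kernel condition collapsing to \eqref{RS::theo::eq::lmi_gsb} --- is exactly the (unwritten) proof the paper intends, and your inertia count for $\Theta := \diag\bigl(\smat{0 & X \\ X & 0}, P, P_\ga\bigr)$ is correct. However, your treatment of the left-kernel condition contains a genuine error. You argue it is automatically satisfied because the annihilator of $\Pi$ selects the top identity block together with the $w$- and $d$-pass-through rows, on which the quadratic form collapses to sub-blocks of $P^{-1}$ and $P_\ga^{-1}$ whose signs are fixed by \eqref{RS::eq::multiplier_set}. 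That mechanism cannot work: on those directions $\Theta^{-1}$ restricts to $\diag\bigl(0,\, [P^{-1}]_{ww},\, -\ga^{-2}I\bigr)$; the $d$-block $-\ga^{-2}I \cl 0$ has the wrong sign outright, and dualizing \eqref{RS::eq::multiplier_set} makes the $z$-block, not the $w$-block, of $P^{-1}$ nonnegative (indeed $[P^{-1}]_{ww} \cle 0$ whenever $0 \in \Vb$). So a sign argument on $\ker(\Pi^T)$ would show the dual inequality \emph{fails} there, and no appeal to the KYP structure to "absorb" the zero block can rescue it.

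The correct reason the second condition disappears is purely structural and uses no multiplier information at all. In Lemma \ref{RS::lem::elimination} the dual inequality \eqref{RS::lem::eq::elib} is evaluated not on $\ker(\Pi^T)$ but on $U_\perp$, a basis of $\ker(U)$, where $U$ is the left coefficient of the gain inside the lower block of $\smat{I_p \\ U^TZV+W}$; geometrically this subspace is $\ker(\Phi_0^T)\cap\ker(\Pi^T)$. Because the full-actuation gain $E=(E_1^T,E_2^T,E_3^T)^T$ injects into \emph{every} row of the $(\dot x, z, e)$ block, the rows of the closed-loop outer factor untouched by $E$ are exactly the identity rows; permuting them to the bottom exhibits the outer factor as $\smat{I \\ \Omega + EM}$ with $\Omega = \smat{A & B_1 & B_2 \\ C_1 & D_{11} & D_{12} \\ C_2 & D_{21} & D_{22}}$, $M = (C_3, D_{31}, D_{32})$, and hence $U = I$ in the lemma. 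Then $\ker(U)=\{0\}$, $U_\perp$ is an empty matrix, and \eqref{RS::lem::eq::elib} is vacuously true --- equivalently, the identity rows make $\Phi_0^T$ injective on $\ker(\Pi^T)$, so the relevant intersection is trivial. With this repair the rest of your argument goes through verbatim: condition \eqref{RS::lem::eq::elia} restricted to $\ker M$ is precisely \eqref{RS::theo::eq::lmi_gsb}, and the constructive direction of Lemma \ref{RS::lem::elimination} returns the desired gain $E$.
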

	
	\begin{theorem}
		\label{RS::theo::ofE}
		Let $G_{ij}$ and $U$ be as in Theorem \ref{RS::theo::of} and let $G_{ij}^E$ denote the transfer matrices corresponding to \eqref{RS::eq::clE}. Then there exists a controller \eqref{RS::eq::con_of2} for the system \eqref{RS::eq::sys_of} such that the analysis LMIs \eqref{RS::lem::lmi_stab} are feasible for the corresponding closed-loop system if there exist symmetric matrices $X$, $Y$ and $\t P \in \t \Pb(\Vb)$ satisfying 
		\begin{subequations}
			\label{RS::theo::eq::lmi_ofE}
			\begin{equation}
				\arraycolsep=1pt
				\mat{cc}{X & X \\ X & Y} \cg 0, ~~
				\Ls\left(\mat{cc}{0 & X \\ X & 0}, \mat{c|c}{\t P & 0 \\ \hline 0 & P_\ga^{-1}}, \mat{cc}{I & 0 \\ -(G_{11}^E)^\ast & -(G_{21}^E)^\ast \\ \hline
					0 & I \\
					-(G_{12}^E)^\ast & -(G_{22}^E)^\ast
				}_{\ss} \right) \cg 0
				\text{ ~~and~~ }
				U^T\Ls\left(\mat{cc}{0 & Y \\ Y & 0}, \mat{c|c}{\t P & 0 \\ \hline 0 & P_\ga^{-1}}, \mat{cc}{I & 0 \\ -G_{11}^\ast & -G_{21}^\ast \\ \hline
					0 & I \\
					-G_{12}^\ast & -G_{22}^\ast
				}_{\ss} \right) U \cg 0.
				\tlabel{RS::theo::eq::lmi_ofEa}{RS::theo::eq::lmi_ofEb}{RS::theo::eq::lmi_ofEc}
			\end{equation}
		\end{subequations}
		Moreover, we have $\gaoptgs \leq \ga_\opt \leq \ga_E$ for $\ga_E$ being the infimal $\ga > 0$ such that the above LMIs are feasible.
	\end{theorem}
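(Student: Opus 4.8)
The plan is to dualize the reasoning behind the primal result Theorem~\ref{RS::theo::ofF}: I would show that the three hypotheses \eqref{RS::theo::eq::lmi_ofEa}--\eqref{RS::theo::eq::lmi_ofEc} imply the conditions of Theorem~\ref{RS::theo::of} for the certificate pair $(X^{-1}, Y)$ and the single multiplier $P := \t P^{-1}$, and then let Theorem~\ref{RS::theo::of} produce the controller \eqref{RS::eq::con_of2}. The first step is to extract the required definiteness from \eqref{RS::theo::eq::lmi_ofEa}: its leading block gives $X \cg 0$, so that $X^{-1}$ is well defined, and a Schur complement with respect to $X$ yields $Y - X \cg 0$, hence also $Y \cg 0$. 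In contrast to the static case no separate stability assumption is needed, precisely because $\smat{X & X \\ X & Y} \cg 0$ already forces $X \cg 0$.

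The central step is the elimination of the full-actuation gain $E$ from \eqref{RS::theo::eq::lmi_ofEb}. Because $\t P \in \t \Pb(\Vb)$, its inverse $P = \t P^{-1}$ lies in $\Pb(\Vb)$ and therefore, by \eqref{RS::eq::multiplier_set}, has exactly $p$ positive and $q$ negative eigenvalues; inertia being invariant under inversion, $\t P$ carries the same signature, which is exactly the inertia that legitimizes an application of the elimination lemma~\ref{RS::lem::elimination} to the multiplier block. In \eqref{RS::eq::clE} the gain $E$ enters only through the outer row factor $(C_3, D_{31}, D_{32})$, so after the transposition implicit in the dualized transfer matrices $(G_{ij}^E)^\ast$ the relevant annihilator is $V$, a basis of $\ker(C_3, D_{31}, D_{32})$. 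Removing $E$ from \eqref{RS::theo::eq::lmi_ofEb} then reproduces \eqref{RS::theo::eq::lmi_ofb} with $X$ replaced by $X^{-1}$ and with multiplier $\t P^{-1} = P$, in complete analogy with the way elimination of $F$ reproduced \eqref{RS::theo::eq::lmi_ofc} in the proof of Theorem~\ref{RS::theo::ofF}.

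It then only remains to line up the two coupling inequalities. A congruence transformation of \eqref{RS::theo::eq::lmi_ofEa} with $\diag(X^{-1}, I)$ turns $\smat{X & X \\ X & Y}$ into $\smat{X^{-1} & I \\ I & Y} \cg 0$, which is \eqref{RS::theo::eq::lmi_ofa} for $X$ replaced by $X^{-1}$; and \eqref{RS::theo::eq::lmi_ofEc} coincides literally with \eqref{RS::theo::eq::lmi_ofc} once one recalls $\t P = P^{-1}$. Thus all three inequalities of Theorem~\ref{RS::theo::of} hold with certificates $X^{-1}$, $Y$ and the common multiplier $P \in \Pb(\Vb)$, and Theorem~\ref{RS::theo::of} furnishes a controller \eqref{RS::eq::con_of2} for which the analysis LMIs \eqref{RS::lem::lmi_stab} are feasible. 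The bound $\gaoptgs \leq \ga_\opt$ is inherited from Theorem~\ref{RS::theo::gs}, while $\ga_\opt \leq \ga_E$ follows at once from the definition of $\ga_\opt$ together with the sufficiency just proved.

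The step I expect to be delicate is the bookkeeping inside the elimination: one must check that removing $E$ from the \emph{dualized} full-actuation LMI \eqref{RS::theo::eq::lmi_ofEb} --- rather than from a primal-form LMI --- genuinely delivers the primal-form condition \eqref{RS::theo::eq::lmi_ofb} with the inverted certificate $X^{-1}$ and the inverted multiplier $P = \t P^{-1}$. This requires tracking the $(\cdot)^\ast$ operations on the $G_{ij}^E$, confirming that the annihilator that survives is $V$ and not $U$, and invoking the dualization lemma~\ref{RS::lem::dualization} to move between the $\t P$- and $P$-descriptions; the inertia count on $\t P$ is what makes every one of these manipulations admissible.
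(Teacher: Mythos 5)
Your proposal is correct and takes essentially the same route the paper intends: the paper omits this proof as the exact dual of the proof of Theorem~\ref{RS::theo::ofF}, and your argument is precisely that proof dualized---use the inertia of $\t P$ (inherited from $P=\t P^{-1}\in\Pb(\Vb)$) to eliminate the gain $E$ from \eqref{RS::theo::eq::lmi_ofEb}, obtaining \eqref{RS::theo::eq::lmi_ofb} with $X$ replaced by $X^{-1}$ and multiplier $P$, transform \eqref{RS::theo::eq::lmi_ofEa} by congruence with $\diag(X^{-1},I)$ into \eqref{RS::theo::eq::lmi_ofa}, note \eqref{RS::theo::eq::lmi_ofEc} is literally \eqref{RS::theo::eq::lmi_ofc}, and invoke Theorem~\ref{RS::theo::of}. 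The bookkeeping you flag as delicate (the transposed entry of $E$, the surviving annihilator $V$, and the passage to the inverted inner matrix) is exactly what the second elimination condition \eqref{RS::lem::eq::elib} produces automatically, so your one-step elimination from the dual-form LMI is sound.
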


	Theorems \ref{RS::theo::ofF} and \ref{RS::theo::ofE} are again nicely intertwined,
	in analogy of what has been stated in Theorem \ref{SHI::theo::it_summary} in Section \ref{SHI::sec::shi}. 
	In particular, the following dual iteration generates a monotonically decreasing sequence $(\ga^k)_{k \in \N}$ of upper bounds on $\ga_\opt$. Moreover, essentially the same statements as in Remark \ref{SHI::rema::stuff} can be made. 
	
	\begin{Algorithm}
		\label{RS::algo::dual_iteration}
		Dual iteration for robust output-feedback $H_\infty$-design.
		\begin{enumerate}
			\item \emph{Initialization:} Compute the lower bound $\gaoptgs$ based on solving the gain-scheduling synthesis LMIs in Theorem \ref{RS::theo::gs} and set $\ga^0 := +\infty$ as well as $k = 1$.
			Design an initial full-information gain $F$ from Lemma \ref{RS::lem::full_info}.
			\item \emph{Primal step:} Compute $\ga_F$ based on solving the primal synthesis LMIs \eqref{RS::theo::eq::lmi_ofF} for the given gain $F$ and choose some small $\eps_k>0$ such that $\ga^k := \ga_F(1+\eps_k) < \ga^{k-1}$. For $\ga = \ga^k$, determine matrices $X,Y$ and $P \in \Pb(\Vb)$ satisfying the LMIs \eqref{RS::theo::eq::lmi_ofF} and apply Lemma \ref{RS::lem::full_actu} in order to design a gain $E$ satisfying the dual synthesis LMIs \eqref{RS::theo::eq::lmi_ofE} for $(X, Y, \t P) = (X^{-1}, Y^{-1}, P^{-1})$. 
			\item \emph{Dual step:} Compute $\ga_E$ based on solving the dual synthesis LMIs \eqref{RS::theo::eq::lmi_ofE} for the given gain $E$ and choose some small $\eps_{k+1}> 0$ such that $\ga^{k+1} := \ga_E (1+\eps_{k+1}) < \ga^k$. For $\ga = \ga^{k+1}$, determine matrices $X, Y$ and $\t P \in \t \Pb(\Vb)$ satisfying the LMIs \eqref{RS::theo::eq::lmi_ofE} and apply Lemma \ref{RS::lem::full_info} in order to design a gain $F$ satisfying the primal synthesis LMIs  \eqref{RS::theo::eq::lmi_ofF} for $(X, Y, P) = (X^{-1}, Y^{-1}, \t P^{-1})$.
			\item \emph{Termination:} If $k$ is too large or $\ga^k$ does not decrease any more, then stop and construct a robust output-feedback controller \eqref{RS::eq::con_of2} for the system \eqref{RS::eq::sys_of} according to Theorem \ref{RS::theo::ofE}. \\
			Otherwise set $k = k+2$ and go to the primal step. 
		\end{enumerate}
	\end{Algorithm}
	
	\begin{remark}
		\label{RS::rema::extensions}
		\begin{enumerate}[(a)]
			\item Algorithm \ref{RS::algo::dual_iteration} can be modified in a straightforward fashion to cope with the even more challenging design of static robust output-feedback controllers. This is essentially achieved by replacing \eqref{RS::theo::eq::lmi_ofFa} and \eqref{RS::theo::eq::lmi_ofEa} with $X = Y \cg 0$ during the iteration. For the initialization, we recommend to take the additional considerations in Remark \ref{SHI::rema::init} into account.
			\item It is not difficult to extend Algorithm \ref{RS::algo::dual_iteration}, e.g., to the more general and highly relevant design of robust gain-scheduling controllers \cite{VeeSch12, Hel95}. For this problem, the uncertainty $\Del(t)$ in the description \eqref{RS::eq::sys_of} is replaced by $\diag(\Del_u(t), \Del_s(t))$ with $\Del_u(t)$ being unknown, while $\Del_s(t)$ is measurable online and taken into account by the to-be-designed controller. As for robust design, this synthesis problem is known to be convex only in very specific situations; for example if the control channel is unaffected by uncertainties \cite{VeeSch12}.
			
			An interesting special case of the general robust gain-scheduling design is sometimes referred to as inexact scheduling \cite{SatPea18}. As for standard gain-scheduling it is assumed that a parameter dependent system \eqref{RS::eq::sys_of} is given, but that the to-be-designed controller only receives noisy online measurements of the parameter instead of exact ones.
			
			We emphasize that such modifications are all straightforward to handle, due to the flexibility of the design framework based on linear fractional representations and the employed multiplier separation techniques underlying Lemma \ref{RS::lem::stab}.
		\end{enumerate}
	\end{remark}

	\subsubsection{Dual Iteration: An Alternative Initialization}\label{RS::sec::alternative_init}
	
	It can happen that the LMIs appearing in the primal step of algorithm \ref{RS::algo::dual_iteration} are infeasible for the initially designed full-information gain. 
	In order to promote the feasibility of these LMIs, we propose an alternative initialization that relies on the following result. 
	
	\begin{lemma}
		\label{RS::lem::initialization}
		Suppose that the gain-scheduling synthesis LMIs in Theorem \ref{RS::theo::gs} are feasible, that some full-actuation gain $E$ is designed from Lemma \ref{RS::lem::full_actu}, and let $G_{ij}$, $G_{ij}^E$ as well as $U$ be taken as in Theorem \ref{RS::theo::ofE}. Then there exist some $\alpha > 0$, symmetric $X, Y$ and $P, \t P \in \t \Pb(\Vb)$ satisfying the LMIs \eqref{RS::theo::eq::lmi_ofE} with $\t P$ in  \eqref{RS::theo::eq::lmi_ofEb} replaced by $P$ and 
		\begin{equation}
			\mat{cc}{\alpha I & P - \t P \\ P - \t P & I} \cg 0.
			\label{RS::lem::eq::lmi_fab}
		\end{equation}
	\end{lemma}
	
	Note that, with a Schur complement argument, \eqref{RS::lem::eq::lmi_fab} is equivalent to $\|P - \t P\|^2 < \alpha$.
	Thus by minimizing $\alpha > 0$ subject to the above LMIs, we push the two multipliers $P$ and $\t P$ as close together as possible. Due to the continuity of the map $M \mapsto M^{-1}$, this means that the inverses $P^{-1}$ and $\t P^{-1}$ are close to each other as well. We can then design a corresponding full-information gain $F$ based on Lemma \ref{RS::lem::full_info} for which the LMIs \eqref{RS::theo::eq::lmi_ofF} are very likely to be feasible for the single multiplier $P^{-1} \approx \t P^{-1}$.
	
	\begin{remark}
		\begin{enumerate}[(a)]
			\item In the case that the above procedure does not yield a gain $F$ for which the LMIs \eqref{RS::theo::eq::lmi_ofF} are feasible, one can, e.g., iteratively double $\ga$ and retry until a suitable gain is found. This practical approach works typically well in various situations.
			\item It would be nicer to directly employ additional constraints for the gain-scheduling synthesis LMIs in Theorem \ref{RS::theo::gs} which promote $P \approx \t P^{-1}$ and, thus, the feasibility of the primal synthesis LMIs \eqref{RS::theo::eq::lmi_ofF} similarly as it was possible for static design in Remark~\ref{SHI::rema::init}. However, as far as we are aware of, this is only possible for specific multipliers and corresponding value sets.
		\end{enumerate}
	\end{remark}

	\subsection{Examples}
	
	Unfortunately, we can not provide a fair comparison of the generalized dual iteration presented in this section with the non-LMI based algorithms \texttt{systune} \cite{ApkNol06} and \texttt{hifoo} \cite{GumHen09}, because, to the best of the authors' knowledge, these do not apply for problems involving time-varying parametric or nonlinear uncertainties.
	Note that the dual iteration in Algorithm \ref{RS::algo::dual_iteration} is also applicable for systems \eqref{RS::eq::sys_of} affected by time-invariant parametric uncertainties. However, the underlying analysis result in Lemma \ref{RS::lem::stab} does not properly take time-invariance into account and is, hence, typically rather conservative for these types of uncertainties.  
	A comparison to other LMI based approaches, such as the D-K iteration, e.g., as suggested in Chapter 7 of \cite{SchWei00}, or a scheme based on S-variables \cite{EbiPea15} would be possible and fair. 
	However, for numerous modified examples from COMPl\textsubscript{e}ib \cite{Lei04}, we obtain qualitatively very similar results to the ones obtained in Section \ref{SHI::sec::exa} 
	that show a better performance of the dual iteration, which is the reason to omit such a comparison for brevity.
	As the only only remarkable difference, the dual iteration is now faster than the D-K iteration and the scheme based on S-variables. This is due to the fact that, in this section, we consider the design of dynamic controllers with the same number of states $n$ as the original system \eqref{RS::eq::sys_of}. Then the latter two algorithms still involve LMIs with Lyapunov matrices in $\R^{2n \times 2n}$ corresponding to the closed-loop interconnection, which is in contrast to the smaller dimension of the Lyapunov matrices in $\R^{n \times n}$ appearing in the Algorithm \ref{RS::algo::dual_iteration}.

	Instead of providing further numerical comparisons, we rather consider an interesting missile control problem and illustrate the extension of Algorithm \ref{RS::algo::dual_iteration} to robust gain-scheduling controller synthesis which constitutes an even more intricate problem than robust design.

	\subsubsection{Missile Control Problem}
	
	Similarly as, e.g., in \cite{ BalPac92, Hel95, SchNji97, PrePos08} and after some simplifications, we face a nonlinear state space model of the form
	\begin{equation}
		\label{RS::exa::mis}
		\begin{aligned}
			\dot \alpha(t) &= K_\alpha M(t) \left[\left(a_n |\alpha(t)|^2 + b_n |\alpha(t)| + c_n\left(2-\frac{M(t)}{3}\right) \right)\alpha(t) + d_n \delta(t) \right] + q(t) \\
			\dot q(t) &= K_q M(t)^2 \left[\left(a_m |\alpha(t)|^2 + b_m |\alpha(t)| + c_m \left(-7 + \frac{8M(t)}{3} \right) \right)\alpha(t) + d_m \delta(t) \right] \\
			n(t) &= K_n M(t)^2 \left[\left(a_n |\alpha(t)|^2 + b_n |\alpha(t)| + c_n\left(2+\frac{M(t)}{3}\right) \right)\alpha(t) + d_n \delta(t) \right],
		\end{aligned}
	\end{equation}
	where $M(t)$ is the Mach number assumed to take values in the interval $[2, 4]$ and with signals
	\begin{center}
		\begin{tabular}{ll@{\hskip 4ex}ll}
			$\alpha$ & angle of attack (in rad) &
			$q$ & pitch rate (in rad/s) \\
			$\delta$ & tail fin deflection (in rad) &
			$n$ & normal acceleration of the missile  (in $g = 32.2$ $\mathrm{ft}/ \mathrm{s}^2$). \\
		\end{tabular}
	\end{center}
	Note that \eqref{RS::exa::mis} is a reasonable approximation for $\alpha(t)$ between $-20$ and $20$ degrees, i.e., $|\alpha(t)| \in [0, \pi/9]$. The constants are given by
	\begin{center}
		\begin{tabular}{llll}
			$a_n = 0.000103 \cdot (180/\pi)^3$&
			$b_n = -0.00945 \cdot (180/\pi)^2$&
			$c_n = -0.1696 \cdot (180/\pi)$&
			$d_n = -0.034 \cdot(180/\pi)$ \\
			$a_m = 0.000215 \cdot (180/\pi)^3$ &
			$b_m = -0.0195 \cdot (180/\pi)^2$&
			$c_m = 0.051 \cdot (180/\pi)$&
			$d_m = -0.206 \cdot (180/\pi)$ \\
			$K_\alpha =  0.7  P_0 S / mv_s$&
			$K_q =  0.7 P_0 S d / I_y \teq{ and }$&
			$K_n = 0.7 P_0 S / mg$.& 
		\end{tabular}
	\end{center}
	The terms in the latter three constants are
	\begin{center}
		\begin{tabular}{ll@{\hskip6ex}ll}
			$P_0 = 973.3$ $\mathrm{lbf}/\mathrm{ft}^2$ & static pressure at 20,000 $\mathrm{ft}$ & 
			$S = 0.44$ $\mathrm{ft}^2$ & reference area \\
			$m = 13.98$ $\mathrm{slugs}$ & mass of the missile  & 
			$v_s = 1036.4$ $\mathrm{ft}/ \mathrm{s}$ & speed of sound at 20,000 $\mathrm{ft}$ \\
			$d = 0.75$ $\mathrm{ft}$ & diameter  &
			$I_y = 182.5$ $\mathrm{slug} \cdot \mathrm{ft}^2$ & pitch moment of inertia. \\
		\end{tabular}
	\end{center}
	The goal is to find a controller such that the commanded acceleration maneuvers $n_c$ are tracked and such that the physical limitations of the fin actuator are not exceeded. Precisely, the objectives are:
	\begin{itemize}
		\item rise-time less than $0.35$ $\mathrm{s}$, 
		steady state error less than $1\%$ and 
		overshoot less than $10\%$.
		\item tail fin deflection less than $25$ $\mathrm{deg}$ and
		tail fin deflection rate less than $25$ $\mathrm{deg}/ \mathrm{s}$ per commanded $g$.
	\end{itemize}
	First, we assume that $\alpha$, $n_c - n$ and $M$ are available for control and, similarly as in \cite{ BalPac92, Hel95, SchNji97, PrePos08}, design a gain-scheduling controller. The latter controller will depend in a nonlinear fashion on the parameters $\alpha$ and $M$ in \eqref{RS::exa::mis}.
	To this end we can rewrite \eqref{RS::exa::mis} as
	\begin{equation*}
		\mat{c}{\dot \alpha(t) \\ \dot q(t) \\ \hline z(t) \\ \hdashline n(t) \\ \alpha(t)}
		={\small \mat{cc|ccccccc:c}{
				0 & 1 & 0 & 0 & 0 & K_\alpha & 0 & 0 & 0 & 0 \\
				0 & 0 & 0 & 0 & 0 & 0 & 0 & 0 & K_q & 0 \\ \hline
				0 & 0 & 0 & 1 & 0 & 0 & 0 & 0 & 0 & 0 \\
				1 & 0 & 0 & 0 & 0 & 0 & 0 & 0 & 0 & 0 \\
				1 & 0 & 0 & 0 & 0 & 0 & 0 & 0 & 0 & 0 \\
				2c_n & 0 & a_n & b_n & -\tfrac{c_n}{3} & 0 & 0 & 0 & 0 & d_n \\
				0 & 0 & 0 & 0 & 0 & 1 & 0 & 0 & 0 & 0 \\
				-7c_m & a_m & b_m & \tfrac{c_m8}{3} & 0 & 0 & 0 & 0 & 0 & d_m \\
				0 & 0 & 0 & 0 & 0 & 0 & 0 & 1 & 0 & 0 \\ \hdashline
				0 & 0 & 0 & 0 & 0 & 0 & K_n & 0 & 0 & 0 \\
				1 & 0 & 0 & 0 & 0 & 0 & 0 & 0 & 0 & 0}}
		\mat{c}{\alpha(t) \\ q(t) \\ \hline w(t) \\ \del(t)},\qquad
		w(t) = \underbrace{\mat{cc}{|\alpha(t)|~ I_2 &  \\ & M(t) I_5}}_{=:\Delta(t)} z(t)
	\end{equation*}
	which includes the measurable signal $\alpha$ as an output. In particular, the above system is the feedback interconnection of an LTI plant $P$ and a time-varying operator $\Delta$. Following \cite{SchNji97}, we aim to design a controller that ensures that the closed-loop specifications are satisfied, by considering the weighted synthesis interconnection as depicted in Fig. \ref{RS::fig::missile_exa}. Here, the fin is driven by the output of $G_\mathrm{act}$, an actuator of second order modeled as
	\begin{equation*}
		\mat{c}{\dot x_\mathrm{act}(t) \\ \hline \del(t) \\ \dot \del(t)}
		= \mat{c|c}{A_\mathrm{act} & B_\mathrm{act} \\ \hline 
			C_\mathrm{act} & 0 \\ C_\mathrm{act}A_\mathrm{act} & C_\mathrm{act}B_\mathrm{act}} \mat{c}{x_\mathrm{act}(t) \\ u(t)}
		\teq{ where }
		C_\mathrm{act}(sI - A_\mathrm{act})^{-1}B_\mathrm{act}
		= \frac{(150)^2}{s^2 + 2\cdot 150\cdot 0.7s + (150)^2}.
	\end{equation*}
	The exogenous disturbances $d_1$ and $d_2$ are used to model measurement noise. 
	The ideal model and weighting filters are given by
	\begin{equation*}
		G_\mathrm{id}(s) = \frac{144(1 -0.05s)}{s^2 + 19.2s+144},\quad
		W_e(s) = \frac{0.5s + 17.321}{s + 0.0577},\quad
		W_\delta(s) = \frac{1}{19},\quad
		W_{\dot \delta}(s) = \frac{1}{25} \teq{ and }
		W_{d_1} = W_{d_2} = 0.001.
	\end{equation*}
	%
	Disconnecting the controller $\Delta \star K$ and, e.g., using the Matlab command \texttt{sysic} yields a system with description \eqref{RS::eq::sys_of} with the stacked signals $d := \col(n_c, W_{d_1}d_1, W_{d_2}d_2)$, $e := \col(W_e(n_\mathrm{id} - n), n, W_\delta \delta, W_{\dot \delta} \delta)$, $y := \col(n_c - n, \alpha)$ and the value set
	\begin{equation*}
		\Vb :=  \{\diag(\del_1 I_2, \del_2 I_5))~\mid~ \del_1 \in [0, \pi/9] \text{ ~and~ } \del_2 \in [2, 4] \}.
	\end{equation*}
	We can hence use a set of multipliers similar to the one in \eqref{RS::eq::multiplier_set_for_int} (closely related to D-G scalings) and employ our analysis and design results. For the synthesis of a gain-scheduling controller, we make use of Theorem \ref{RS::theo::gs}; note that for D-G scalings it is possible to use the scheduling function $S(\Del) = \Del = \mathrm{id}(\Del)$.
	
	Applying Theorem \ref{RS::theo::gs} yields an upper bound on the optimal closed-loop energy gain of $\ga_{\mathrm{gs}} = 2.23$ and the upper row of Fig.~\ref{RS::fig::BodeGS} depicts the Bode magnitude plots of the corresponding closed-loop system with the resulting gain-scheduling controller for frozen values of $\Delta$. Finally, time-domain simulations of the nonlinear closed-loop systems are given in the first two rows of Fig.~\ref{RS::fig::MISGS}. Here, we consider trajectories for several (almost arbitrarily chosen) Mach numbers
	\begin{equation}
		M_k(t) = \mathrm{sat}(4 - \tfrac{(t + 1.25(k-1))}{5})
		\teq{ with }
		\mathrm{sat}(t) = \max\big\{2,~ \min\{4, t\} \big\}
		\label{RS::eq::exa::Machnumbers}
	\end{equation}
	and we let both disturbances $d_1$ and $d_2$ be zero.
	We observe that the specifications are met for most of those Mach numbers apart from the constraint on the tail fin deflection rate, which is not well-captured by $H_\infty$-criteria. Of course, the performance of the designed controller can be improved by readjusting the weights, but this is not our intention at this point. 
	
	\vspace{1ex}
	
	Instead, let us now assume that the Mach number $M$ can not be measured online and that only $\alpha$ and $n_c - n$ are available for control. Hence, we now aim to design a controller that is robust against variations in $M$, but benefits from the measurement of the parameter $\alpha$ that enters \eqref{RS::exa::mis} in a nonlinear fashion.
	This boils down to the synthesis of a robust gain-scheduling controller, which is more general and challenging than the design of robust controllers as considered in this section. 
	However, as emphasized in Remark \ref{RS::rema::extensions} and due to the modularity of the LFR framework, it is fortunately not difficult to extend the dual iteration in order to cope with such a problem as well.
	Indeed, after five iterations we reach an upper bound of $\ga^5 = 3.30$ on the optimal closed-loop robust energy gain, which is not far away from the bound achieved by the gain-scheduling design.
	The lower row of Fig.~\ref{RS::fig::BodeGS} illustrates the resulting closed-loop frequency responses for several frozen values of $\Del$ and last two rows of Fig. \ref{RS::fig::MISGS} depicts simulations of the nonlinear closed-loop system for several Mach numbers as in \eqref{RS::eq::exa::Machnumbers}. 
	In particular, we observe that the tracking behavior degrades, which is not surprising as the controller takes fewer measurements into account.
	
	Finally, note that we can of course also view both $|\alpha|$ and $M$ as an uncertainty and design a robust controller based on the dual iteration as discussed in this section. For this specific example this even leads after five iterations to an upper bound of $\ga^5 = 3.32$ and a closed-loop behavior that is almost identical to the one corresponding to the robust gain-scheduling design. Note that this in general not the case as a robust controller utilizes less information than a robust gain-scheduling controller. 
	

	\begin{figure}
		\begin{center}
			\includegraphics[width=0.6\textwidth]{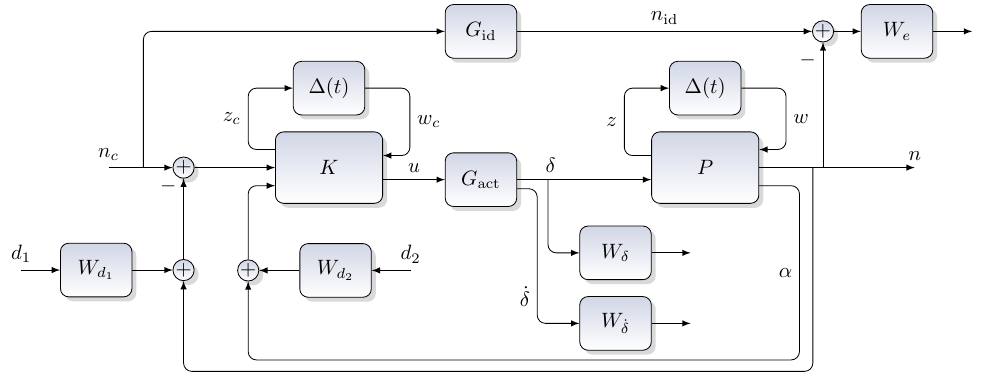}
		\end{center}
		\caption{Interconnection structure for gain-scheduled synthesis.}
		\label{RS::fig::missile_exa}
	\end{figure}

	\begin{figure}
		\begin{center}
			\begin{minipage}{0.48\textwidth}
				\begin{center}
					\includegraphics[width=\textwidth, height=0.5\textwidth, trim = 05 95 30 20, clip]{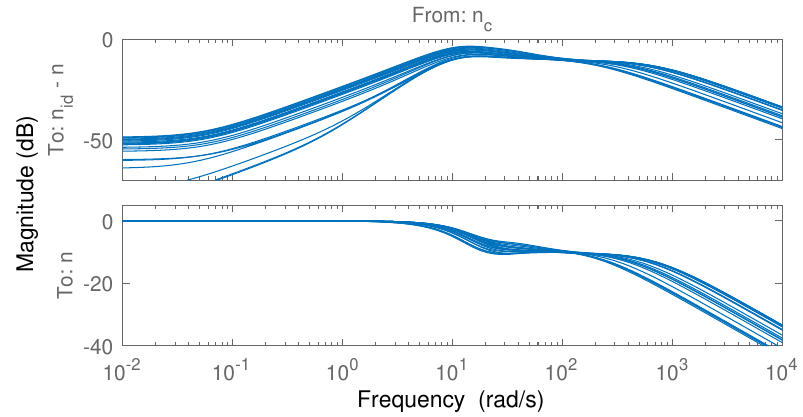}
				\end{center}
			\end{minipage}
			\hspace{2ex}
			\begin{minipage}{0.48\textwidth}
				\begin{center}
					\includegraphics[width=\textwidth, height=0.5\textwidth, trim = 05 95 30 20, clip]{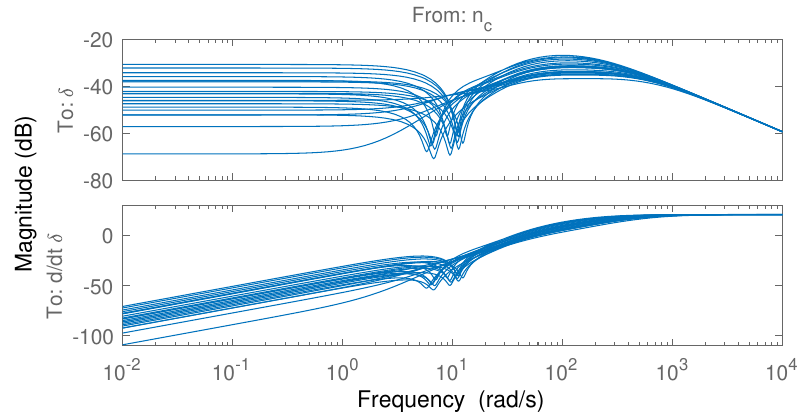}
				\end{center}
			\end{minipage}
			\begin{minipage}{0.48\textwidth}
				\begin{center}
					\includegraphics[width=\textwidth, height=0.5\textwidth, trim = 05 95 30 20, clip]{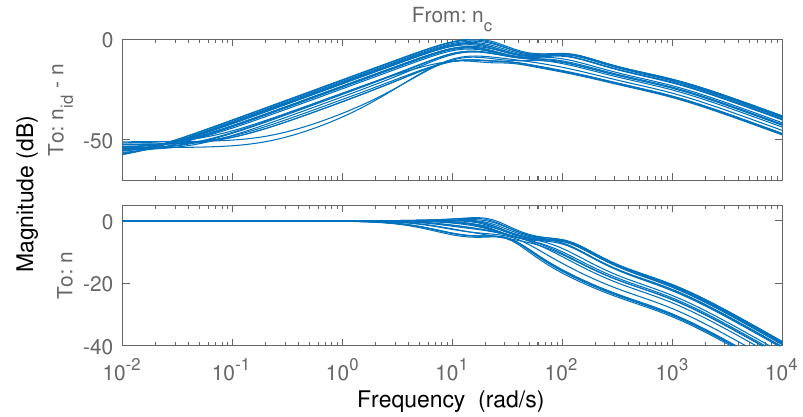}
				\end{center}
			\end{minipage}
			\hspace{2ex}
			\begin{minipage}{0.48\textwidth}
				\begin{center}
					\includegraphics[width=\textwidth, height=0.5\textwidth, trim = 05 95 30 20, clip]{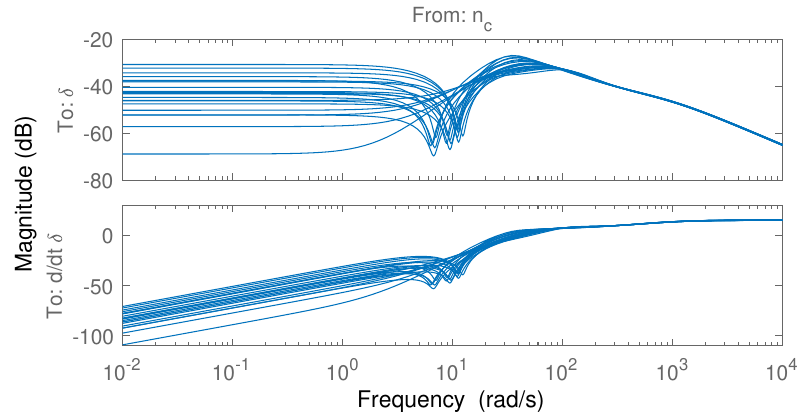}
				\end{center}
			\end{minipage}
		\end{center}
		\vspace{-2ex}
		\caption{Bode plots of the unweighted closed-loop interconnection for frozen values of $\Del$ with a gain-scheduling controller resulting from Theorem\,\ref{RS::theo::gs} (upper row) and a robust gain-scheduling controller resulting from the dual iteration (lower row).}
		\label{RS::fig::BodeGS}
	\end{figure}

	\begin{figure}
		\begin{center}
			\begin{minipage}{0.48\textwidth}
				\begin{center}
					\includegraphics[width=\textwidth, height=0.3\textwidth, trim = 15 185 30 5, clip]{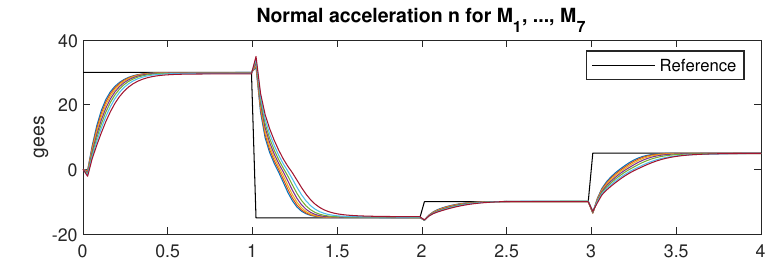}
					
					\vspace{1ex}
					\includegraphics[width=\textwidth, height=0.3\textwidth, trim = 15 170 30 5, clip]{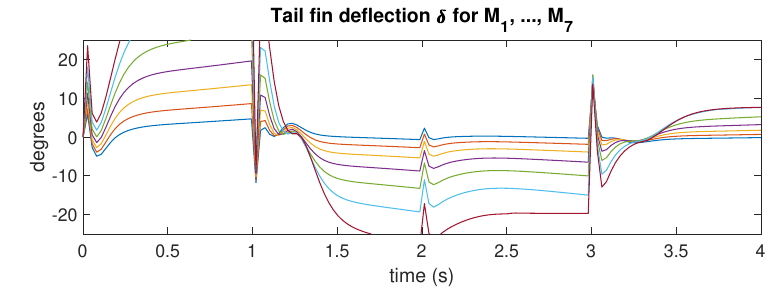}
				\end{center}
			\end{minipage}
			\hspace{2ex}
			\begin{minipage}{0.48\textwidth}
				\begin{center}
					\includegraphics[width=\textwidth, height=0.3\textwidth, trim = 15 185 30 5, clip]{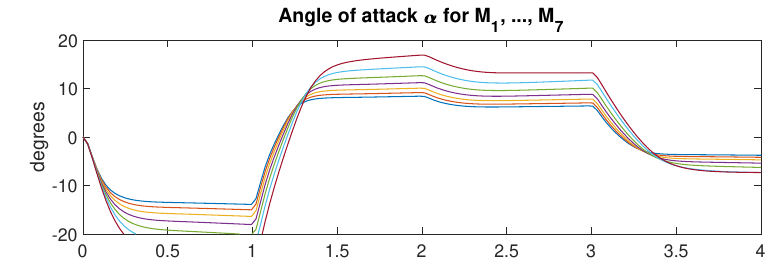}
					
					\vspace{1ex}
					\includegraphics[width=\textwidth, height=0.3\textwidth, trim = 15 170 30 5, clip]{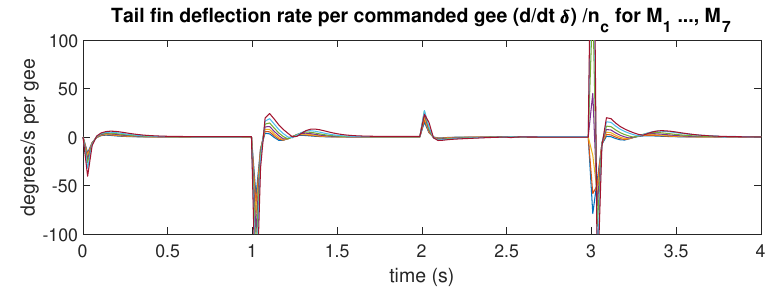}
				\end{center}
			\end{minipage}
			\begin{minipage}{0.48\textwidth}
				\begin{center}
					\includegraphics[width=\textwidth, height=0.3\textwidth, trim = 15 185 30 5, clip]{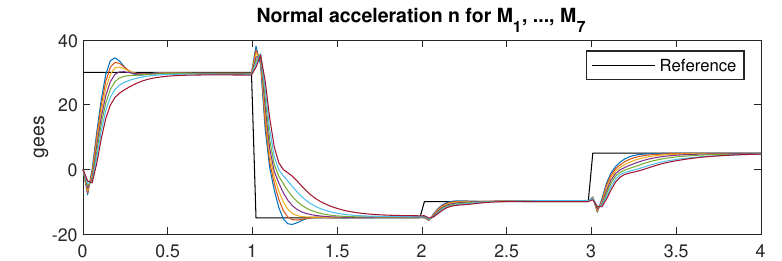}
					
					\vspace{1ex}
					\includegraphics[width=\textwidth, height=0.3\textwidth, trim = 15 170 30 5, clip]{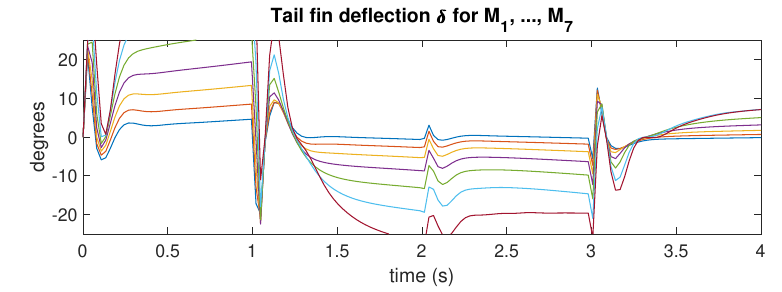}
				\end{center}
			\end{minipage}
			\hspace{2ex}
			\begin{minipage}{0.48\textwidth}
				\begin{center}
					\includegraphics[width=\textwidth, height=0.3\textwidth, trim = 15 185 30 5, clip]{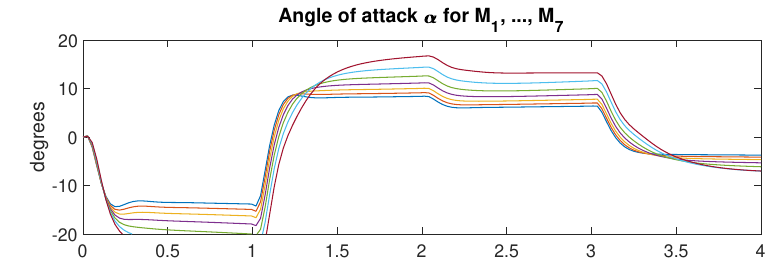}
					
					\vspace{1ex}
					\includegraphics[width=\textwidth, height=0.3\textwidth, trim = 15 170 30 5, clip]{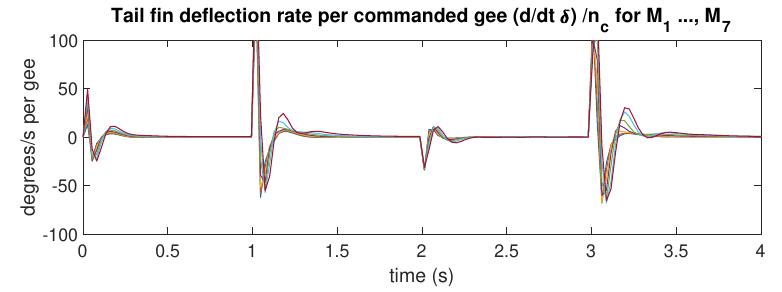}
				\end{center}
			\end{minipage}
		\end{center}
		\vspace{-2ex}
		\caption{Closed-loop trajectories for Mach numbers $M_1, \dots, M_7$ as in \eqref{RS::eq::exa::Machnumbers} and for the gain-scheduling controller (first two rows) as well as the robust gain-scheduling controller (last two rows).}
		\label{RS::fig::MISGS}
	\end{figure}

	\section{Conclusions}
	
	We demonstrate that the dual iteration, together with linear fractional representation framework, is a powerful and flexible tool to tackle various challenging and interesting non-convex controller synthesis problems especially if compared to other heuristic approaches such as the classical D-K iteration. The iteration, as introduced in \cite{Iwa97} for the design of stabilizing static output-feedback controllers, heavily relies on the elimination lemma. We extend those ideas to the synthesis of static $H_\infty$ and robust $H_\infty$ output-feedback controllers in a common fashion. As the icing on the cake, we demonstrate in terms of a missile autopilot design example, that a seamless extension to robust gain-scheduling output-feedback $H_\infty$-design is possible as well.
	
	Since the underlying elimination lemma is not applicable for numerous non-convex design problems, such as multi-objective controller design, we also provide a novel alternative interpretation of the individual steps of the dual iteration. We demonstrate that the latter interpretation allows for the extension of the dual iteration for such situations as well. 
	
	Future research could be devoted to extensions of the dual iteration to robust output-feedback design based on more elaborate analysis results. Precisely, analysis results based on parameter-dependent Lyapunov functions or on integral quadratic constraints with dynamic multipliers. It would also be very interesting and fruitful to extend the iteration for static or robust output-feedback design for hybrid and switched systems.

	\section*{Acknowledgments}
	
	
	Funded by Deutsche Forschungsgemeinschaft (DFG, German Research Foundation) under Germany's Excellence Strategy - EXC 2075 – 390740016. We acknowledge the support by the Stuttgart Center for Simulation Science (SimTech).
	
	\bibliography{literatur}%

\begin{thebibliography}{10}
\providecommand \doibase [0]{http://dx.doi.org/}%

\bibitem{TokOez95}
Toker O, {\"O}zbay H. On the {NP}-hardness of solving bilinear matrix
  inequalities and simultaneous stabilization with static output feedback. In:
  {\it Proc. Amer. Control Conf.}; 1995\string: 2525--2526

\bibitem{SyrAbd97}
Syrmos VL, Abdallah CT, Dorato P, Grigoriadis K. Static Output Feedack - A
  survey. {\it Automatica} 1997\string; 33(2)\string: 125--137.
\newblock \href {\doibase 10.1016/S0005-1098(96)00141-0} {doi:
  10.1016/S0005-1098(96)00141-0}

\bibitem{SadPea16}
Sadabadi MS, Peaucelle D. From static output feedback to structured robust
  static output feedback: A survey. {\it Annu. Rev. Control} 2016\string;
  42\string: 11--26.
\newblock \href {\doibase 10.1016/j.arcontrol.2016.09.014} {doi:
  10.1016/j.arcontrol.2016.09.014}

\bibitem{ZhoDoy96}
Zhou K, Doyle JC, Glover K. {\it Robust and optimal control}.
\newblock Prentice Hall, Upper Saddle River, New Jersey.
\newblock 1996.

\bibitem{Sch01a}
Scherer CW. {\it Theory of Robust Control}.
\newblock Delft University of Technology: Mechanical Engineering Systems and
  Control Group.
\newblock 2001.

\bibitem{GreLim95}
Green M, Limebeer DJN. {\it Linear Robust Control}.
\newblock Prentice-Hall.
\newblock 1995.

\bibitem{MegRan97}
Megretsky A, Rantzer A. System Analysis via Integral Quadratic Constraints.
  {\it IEEE Trans. Autom. Control} 1997\string; 42(6)\string: 819--830.
\newblock \href {\doibase 10.1109/9.587335} {doi: 10.1109/9.587335}

\bibitem{Iwa97}
Iwasaki T. The dual iteration for fixed order control. In:  {\it Proc. Amer.
  Control Conf.}; 1997\string: 62--66

\bibitem{Iwa99}
Iwasaki T. The dual iteration for fixed-order control. {\it IEEE Trans. Autom.
  Control} 1999\string; 44(4)\string: 783--788.
\newblock \href {\doibase 10.1109/9.754818} {doi: 10.1109/9.754818}

\bibitem{Hel99}
Helmersson A. {IQC} synthesis based on inertia constraints. {\it {IFAC} Proc.
  Vol.} 1999\string; 32(2)\string: 3361--3366.
\newblock \href {\doibase 10.1016/S1474-6670(17)56573-8} {doi:
  10.1016/S1474-6670(17)56573-8}

\bibitem{SchWei00}
Scherer CW, Weiland S. {\it Linear Matrix Inequalities in Control}.
\newblock Lecture Notes, Dutch Inst. Syst. Control, Delft.
\newblock 2000.

\bibitem{Hof16}
Hoffmann C. {\it Linear Parameter-Varying Control of Systems of High
  Complexity}. PhD thesis. University of Hamburg,  2016

\bibitem{HolSch19}
Holicki T, Scherer CW. A Homotopy Approach for Robust Output-Feedback
  Synthesis. In:  {\it Proc. 27th Med. Conf. Control Autom.}; 2019\string:
  87--93

\bibitem{BoyGha94}
Boyd S, El~Ghaoui L, Feron E, Balakrishnan V. {\it Linear Matrix Inequalities
  in System \& Control Theory}.
\newblock Society for Industrial \& Applied.
\newblock 1994

\bibitem{Ran96}
Rantzer A. On the {K}alman-{Y}akubovich-{P}opov lemma. {\it Syst. Control
  Lett.} 1996\string; 28(1)\string: 7--10.
\newblock \href {\doibase 10.1016/0167-6911(95)00063-1} {doi:
  10.1016/0167-6911(95)00063-1}

\bibitem{MasOha98}
Masubuchi I, Ohara A, Suda N. {LMI}-based controller synthesis: a unified
  formulation and solution. {\it Int. J. Robust Nonlin.} 1998\string;
  8(8)\string: 669--686.
\newblock \href {\doibase
  10.1002/(SICI)1099-1239(19980715)8:8<669::AID-RNC337>3.0.CO;2-W} {doi:
  10.1002/(SICI)1099-1239(19980715)8:8<669::AID-RNC337>3.0.CO;2-W}

\bibitem{Sch96b}
Scherer CW. Mixed ${H}_2/{H}_{\infty}$ control for time-varying and linear
  parametrically-varying systems. {\it Int. J. Robust Nonlin.} 1996\string;
  6(9-10)\string: 929--952.
\newblock \href {\doibase
  10.1002/(SICI)1099-1239(199611)6:9/10<929::AID-RNC260>3.0.CO;2-9} {doi:
  10.1002/(SICI)1099-1239(199611)6:9/10<929::AID-RNC260>3.0.CO;2-9}

\bibitem{GahApk94}
Gahinet P, Apkarian P. A linear matrix inequality approach to ${H}_\infty$
  control. {\it Int. J. Robust Nonlin.} 1994\string; 4\string: 421--448.
\newblock \href {\doibase 10.1002/rnc.4590040403} {doi: 10.1002/rnc.4590040403}

\bibitem{IwaSke94}
Iwasaki T, Skelton RE. All controllers for the general ${H}_\infty$ control
  problem: {LMI} existence conditions and state space formulas. {\it
  Automatica} 1994\string; 30(8)\string: 1307--1317.
\newblock \href {\doibase 10.1016/0005-1098(94)90110-4} {doi:
  10.1016/0005-1098(94)90110-4}

\bibitem{EbiPea15}
Ebihara Y, Peaucelle D, Arzelier D. {\it S-Variable Approach to {LMI}-Based
  Robust Control}.
\newblock Springer-Verlag London.
\newblock 2015

\bibitem{ArzPea03}
Arzelier D, Peaucelle D, Salhi S. Robust Static Output Feedback Stabilization
  for Polytopic Uncertain Systems: Improving the Guaranteed Performance Bound.
  {\it {IFAC} Proc. Vol.} 2003\string; 36(11)\string: 425--430.
\newblock \href {\doibase 10.1016/S1474-6670(17)35701-4} {doi:
  10.1016/S1474-6670(17)35701-4}

\bibitem{HenSeb03}
Henrion D, {\v{S}}ebek M, Ku\v{c}era V. Positive polynomials and robust
  stabilization with fixed-order controllers. {\it {IEEE} Trans. Autom.
  Control} 2003\string; 48(7)\string: 1178--1186.
\newblock \href {\doibase 10.1109/TAC.2003.814103} {doi:
  10.1109/TAC.2003.814103}

\bibitem{GahNem95}
Gahinet P, Nemirovski A, Laub AJ, Chilali M. {LMI} Control Toolbox: {F}or use
  with {M}atlab. tech. rep., The MathWorks, Inc;  1995.

\bibitem{Lei04}
Leibfritz F. COMP$l_e$ib: COnstraint Matrix-optimization Problem library - a
  collection of test examples for nonlinear semidefinite programs, control
  system design and related problems. tech. rep., University of Trier;  2004.

\bibitem{Boy94}
Boyd S. Robust Control Tools: Graphical User-Interfaces and LMI Algorithms.
  {\it Syst. Control Inform.} 1994\string; 38(3)\string: 111--117.

\bibitem{GhaBal94}
El~Ghaoui L, Balakrishnan V. Synthesis of fixed-structure controllers via
  numerical optimization. In:  {\it Proc. 33rd {IEEE} Conf. Decision and
  Control}; 1994\string: 2678--2683

\bibitem{ApkNol06}
Apkarian P, Noll D. Nonsmooth ${H}_\infty$ Synthesis. {\it {IEEE} Trans. Autom.
  Control} 2006\string; 51(1)\string: 71--86.
\newblock \href {\doibase 10.1109/TAC.2005.860290} {doi:
  10.1109/TAC.2005.860290}

\bibitem{BurHen06}
Burke JV, Henrion D, Lewis AS, Overton ML. {HIFOO} - A {M}atlab package for
  fixed-order controller design and ${H}_\infty$ optimization. {\it {IFAC}
  Proc. Vol.} 2006\string; 39(9)\string: 339--344.
\newblock \href {\doibase 10.3182/20060705-3-FR-2907.00059} {doi:
  10.3182/20060705-3-FR-2907.00059}

\bibitem{HolSch21}
[dataset]Holicki T, Scherer CW. 2021; Revisiting and Generalizing the Dual Iteration
  for Static and Robust Output-Feedback Synthesis. Zenodo. Version 1.0.0\string; 
  \newblock \href {\doibase 10.5281/zenodo.4501499} {doi:
  10.5281/zenodo.4501499}

\bibitem{Mos17}
{MOSEK ApS} . {\it The MOSEK optimization toolbox for MATLAB manual. Version
  8.1.}  2017.

\bibitem{Stu01}
Sturm JF. Using {SEDUMI} 1.02, a {Matlab} Toolbox for Optimization Over
  Symmetric Cones. {\it Optim. Method. Softw.} 2001\string; 11(12)\string:
  625--653.
\newblock \href {\doibase 10.1080/10556789908805766} {doi:
  10.1080/10556789908805766}

\bibitem{WalHan04}
Wallin R, Hansson A. {KYPD}: a solver for semidefinite programs derived from
  the {K}alman-{Y}akubovich-{P}opov lemma. In:  {\it Proc. {IEEE} Int. Symp.
  Comput.-Aided Control Syst. Design}; 2004\string: 1--6

\bibitem{SunPac05}
Sun K, Packard A. Robust ${H}_2$ and ${H}_\infty$ filters for uncertain {LFT}
  systems. {\it {IEEE} Trans. Autom. Control} 2005\string; 50(5)\string:
  715--720.
\newblock \href {\doibase 10.1109/tac.2005.847040} {doi:
  10.1109/tac.2005.847040}

\bibitem{GerDeo01}
Geromel JC, Oliveira dMC. ${H}_2$ and ${H}_\infty$ robust filtering for convex
  bounded uncertain systems. {\it {IEEE} Trans. Autom. Control} 2001\string;
  46(1)\string: 100--107.
\newblock \href {\doibase 10.1109/9.898699} {doi: 10.1109/9.898699}

\bibitem{SchKoe08}
Scherer CW, K\"ose IE. Robustness with dynamic {IQCs}: An exact state-space
  characterization of nominal stability with applications to robust estimation.
  {\it Automatica} 2008\string; 44(7)\string: 1666--1675.
\newblock \href {\doibase 10.1016/j.automatica.2007.10.023} {doi:
  10.1016/j.automatica.2007.10.023}

\bibitem{Ger99}
Geromel JC. Optimal linear filtering under parameter uncertainty. {\it {IEEE}
  Trans. Signal Process.} 1999\string; 47(1)\string: 168--175.
\newblock \href {\doibase 10.1109/78.738249} {doi: 10.1109/78.738249}

\bibitem{Sch01}
Scherer CW. {LPV} Control and Full Block Multipliers. {\it Automatica}
  2001\string; 37(3)\string: 361--375.
\newblock \href {\doibase 10.1016/S0005-1098(00)00176-X} {doi:
  10.1016/S0005-1098(00)00176-X}

\bibitem{Sch97}
Scherer CW. A Full Block S-Procedure with Applications. In:  {\it Proc. 36th
  IEEE Conf. Decision and Control}; 1997\string: 2602--2607

\bibitem{DetSch01}
Dettori M, Scherer CW. {LPV} design for a {CD} player: an experimental
  evaluation of performance. In:  {\it Proc. 40th {IEEE} Conf. Decision and
  Control}; 2001\string: 4711--4716

\bibitem{Sch00a}
Scherer CW. An efficient solution to multi-objective control problems with
  {LMI} objectives. {\it Syst. Control Lett.} 2000\string; 40(1)\string:
  43--57.
\newblock \href {\doibase 10.1016/S0167-6911(99)00122-X} {doi:
  10.1016/S0167-6911(99)00122-X}

\bibitem{SchGah97}
Scherer CW, Gahinet P, Chilali M. Multiobjective output-feedback control via
  {LMI} optimization. {\it {IEEE} Trans. Autom. Control} 1997\string;
  42(7)\string: 896--911.
\newblock \href {\doibase 10.1109/9.599969} {doi: 10.1109/9.599969}

\bibitem{GumHen09}
Gumussoy S, Henrion D, Millstone M, Overton ML. Multiobjective Robust Control
  with {HIFOO} 2.0. {\it {IFAC} Proc. Vol.} 2009\string; 42(6)\string:
  144--149.
\newblock \href {\doibase 10.3182/20090616-3-IL-2002.00025} {doi:
  10.3182/20090616-3-IL-2002.00025}

\bibitem{Fra87}
Francis B. {\it A course in ${H}_\infty$ Control Theory}.
\newblock Lecture Notes in Control and Information ScienceSpringer-Verlag
  Berlin Heidelberg.
\newblock 1987

\bibitem{Apk13}
Apkarian P. Tuning controllers against multiple design requirements. In:  {\it
  Proc. Amer. Control Conf.}; 2013\string: 3888--3893

\bibitem{HorJoh90}
Horn RA, Johnson CR. {\it Matrix Analysis}.
\newblock Cambridge University Press.
\newblock 1990

\bibitem{Vee15}
Veenman J. {\it A general framework for robust analysis and control: an
  integral quadratic constraint based approach}. PhD thesis. University of
  Stuttgart,  2015.

\bibitem{Pac94}
Packard A. Gain scheduling via linear fractional transformations. {\it Syst.
  Control Lett.} 1994\string; 22(2)\string: 79--92.
\newblock \href {\doibase 10.1016/0167-6911(94)90102-3} {doi:
  10.1016/0167-6911(94)90102-3}

\bibitem{Sch00}
Scherer CW. Robust mixed control and Linear Parameter-Varying Control with Full
  Block Scalings. In:  El~Ghauoui L, Niculescu SI. \kern-2pt, eds. {\it
  Advances in linear matrix inequality methods in control}SIAM.  2000

\bibitem{VeeSch12}
Veenman J, Scherer CW. Robust gain-scheduled controller synthesis is convex for
  systems without control channel uncertainties. In:  {\it Proc. 51st {IEEE}
  Conf. Decision and Control}; 2012\string: 1524--1529

\bibitem{Hel95}
Helmersson A. {\it Methods for Robust Gain-Scheduling}. PhD thesis.
  Link{\"o}ping University, Sweden,  1995.

\bibitem{SatPea18}
Sato M, Peaucelle D. A New Method for Gain-Scheduled Output Feedback Controller
  Design Using Inexact Scheduling Parameters. In:  {\it Proc. Conf. Control
  Tech. Appl.}; 2018\string: 1295--1300

\bibitem{BalPac92}
Balas GJ, Packard AK. Design of robust, time-varying controllers for missile
  autopilots. In:  {\it Proc. {IEEE} Conf. Control Appl.}; 1992\string:
  104--110

\bibitem{SchNji97}
Scherer CW, Njio RGE, Bennani S. Parametrically Varying Flight Control System
  Design with Full Block Scalings. In:  {\it Proc. 36th {IEEE} Conf. Decision
  and Control}; 1997\string: 1510--1515

\bibitem{PrePos08}
Prempain E, Postlethwaite I. ${L}_2$ and ${H_2}$ performance analysis and
  gain-scheduling synthesis for parameter-dependent systems. {\it Automatica}
  2008\string; 44(8)\string: 2081--2089.
\newblock \href {\doibase 10.1016/j.automatica.2007.12.008} {doi:
  10.1016/j.automatica.2007.12.008}

\end{thebibliography}

	\appendix
	
	\section{Dualization and Elimination}

	The following technical results are highly useful for controller design purposes.
	
	\begin{lemma}
		\label{RS::lem::dualization}
		\cite{Sch01, SchWei00} Let $A\in \R^{(p+q)\times q}$, $B\in \R^{(p + q)\times p}$, $P=P^T\in \R^{(p+q)\times (p+q)}$ and suppose that $(A, B)$ and $P$ are nonsingular. Further, let $U$ and $V$ be basis matrices of $\ker(A^T)$ and $\ker(B^T)$, respectively. Then
		\begin{equation*}
			A^T P A \cl 0
			\text{ ~and~ }
			B^T P B \cge 0
			\teq{ are equivalent to }
			U^T P^{-1}U \cg 0
			\text{ ~and~ }
			V^T P^{-1}V \cle 0.
		\end{equation*}
	\end{lemma}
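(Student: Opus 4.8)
\emph{Plan.} Write $n:=p+q$ and set $T:=(A\ B)$, the square matrix whose columns are those of $A$ followed by those of $B$; by hypothesis $T$ is nonsingular, so $A$ and $B$ have full column rank $q$ and $p$, the ranges $\Im(A)$ and $\Im(B)$ are complementary, and $\ker(A^T)$, $\ker(B^T)$ have dimensions $p$ and $q$ (hence $U$ is $n\times p$ and $V$ is $n\times q$). The first move is the congruence $T^TPT=\smat{P_{11} & P_{12}\\ P_{21} & P_{22}}$ with $P_{11}=A^TPA\cl 0$ and $P_{22}=B^TPB\cge 0$. I would then form the Schur complement $S:=P_{22}-P_{21}P_{11}^{-1}P_{12}$ of the invertible, negative definite block $P_{11}$. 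Since $-P_{11}^{-1}\cg 0$ we get $S\cge 0$, and nonsingularity of $P$ (via $\det(T^TPT)=\det(P_{11})\det(S)\neq 0$) upgrades this to $S\cg 0$; this also pins the inertia of $P$ to $(p,q)$.

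Next I would pass to the inverse through the same transformation. Writing $M:=(T^TPT)^{-1}$ we have $P^{-1}=TMT^{T}$, and because $A^TU=0$ and $B^TV=0$ the cross terms drop out: $T^TU=\smat{0\\ B^TU}$ and $T^TV=\smat{A^TV\\ 0}$. Consequently $U^TP^{-1}U=(B^TU)^TM_{22}(B^TU)$ and $V^TP^{-1}V=(A^TV)^TM_{11}(A^TV)$, reducing the claim entirely to the diagonal blocks $M_{22}$, $M_{11}$ of $M$. A short check shows the square matrices $B^TU$ and $A^TV$ are nonsingular: any $x$ with $B^TUx=0$ gives $Ux\in\ker(A^T)\cap\ker(B^T)=\ker(T^T)=\{0\}$, so $x=0$, and symmetrically for $A^TV$. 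By block inversion $M_{22}=S^{-1}\cg 0$, which immediately yields $U^TP^{-1}U\cg 0$.

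The remaining point, and in my view \textbf{the only delicate} one, is the sign of $M_{11}=P_{11}^{-1}+P_{11}^{-1}P_{12}S^{-1}P_{21}P_{11}^{-1}$. The subtlety is that only $P_{22}\cge 0$ is available, so I cannot take a Schur complement with respect to $P_{22}$ (it may be singular), and the formula for $M_{11}$ is a negative definite term plus a positive semidefinite term, hence not visibly sign-definite. The resolution is to set $N:=-P_{11}\cg 0$ and rewrite the desired $M_{11}\cle 0$, after congruence by $N^{1/2}$, as the norm-type bound $P_{21}N^{-1}P_{12}\cle S$. But $S=P_{22}+P_{21}N^{-1}P_{12}$ with $P_{22}\cge 0$ gives exactly $S\cge P_{21}N^{-1}P_{12}$, so the bound holds, $M_{11}\cle 0$, and therefore $V^TP^{-1}V\cle 0$. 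This completes the forward implication.

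Finally, I would obtain the converse for free by a symmetry argument rather than by re-running the computation, exploiting that the statement is self-dual under $P\mapsto -P^{-1}$. Applying the just-proven forward implication to the nonsingular symmetric matrix $-P^{-1}$, with $U$ in the role of the negative block and $V$ in the role of the semidefinite block (the pair $(U\ V)$ is nonsingular since $\ker(A^T)\oplus\ker(B^T)=\R^n$), produces inequalities for $(-P^{-1})^{-1}=-P$ on $\ker(U^T)=\Im(A)$ and $\ker(V^T)=\Im(B)$. As $A$ and $B$ are basis matrices of these two spaces, congruence invariance converts the conclusions into $A^TPA\cl 0$ and $B^TPB\cge 0$. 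Thus only the forward direction requires detailed computation, and the two implications are mirror images of one another.
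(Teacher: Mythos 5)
Your proof is correct, but note that the paper itself does not prove this lemma at all: it is quoted from the cited references (Scherer's work), and the only result proved in the paper's appendix is the elimination lemma, whose proof \emph{uses} dualization as a black box. So there is no paper proof to match against; what you have produced is a self-contained substitute, and it holds up under checking. The forward direction is sound: congruence by $T=(A\ B)$ gives $P_{11}=A^TPA\cl 0$, $P_{22}=B^TPB\cge 0$; the Schur complement $S=P_{22}-P_{21}P_{11}^{-1}P_{12}$ is positive semidefinite and is upgraded to $S\cg 0$ by $\det(T^TPT)=\det(P_{11})\det(S)\neq 0$; the reductions $U^TP^{-1}U=(B^TU)^TM_{22}(B^TU)$ and $V^TP^{-1}V=(A^TV)^TM_{11}(A^TV)$ together with nonsingularity of $B^TU$ and $A^TV$ (via $\ker(A^T)\cap\ker(B^T)=\{0\}$) are exactly right; and your handling of the one delicate block is valid, since $M_{11}\cle 0$ reduces (after congruence by $N=-P_{11}$ and the standard equivalence $C^TC\cle I \Leftrightarrow CC^T\cle I$) to $P_{21}N^{-1}P_{12}\cle S$, which is precisely the statement $P_{22}\cge 0$. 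The converse-by-self-duality is the most elegant part: applying the forward implication to $-P^{-1}$ with the pair $(U,V)$, which is nonsingular because $\Im(A)^\perp\cap\Im(B)^\perp=(\Im(A)+\Im(B))^\perp=\{0\}$, and using $\ker(U^T)=\Im(A)$, $\ker(V^T)=\Im(B)$ so that $A$ and $B$ serve as the needed basis matrices, recovers the original inequalities without any recomputation. This mirrors the spirit of the paper's proof of the elimination lemma (inertia bookkeeping plus Schur complements), and compared with the usual textbook proof it has the advantage that the inertia of $P$ is a by-product rather than an input.
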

	
	This lemma is usually referred to as \emph{dualization lemma} and most typically applied in the case that $A = \smat{I_p \\ W}$ and $B = \smat{0 \\ I_q}$ for some matrix $W \in \R^{q \times p}$. For any nonsingular symmetric matrix $P$, Lemma \ref{RS::lem::dualization} states in this case that
	\begin{equation*}
		\mat{c}{I_p \\ W}^{\!T}\! P \mat{c}{I_p \\ W} \cl 0
		\text{ and }\mat{c}{0 \\ I_q}^{\!T}\! P \mat{c}{0 \\ I_q} \cge 0
		\teq{ are equivalent to }
		\mat{c}{-W^T \\ I_q}^{\!T}\! P^{-1} \mat{c}{-W^T \\ I_q} \cg 0
		\text{ and }
		\mat{c}{I_p \\ 0}^{\!T}\! P^{-1}\mat{c}{I_p \\ 0} \cle 0.
	\end{equation*}

	The following \emph{elimination lemma} is a very powerful tool to turn several apparently non-convex controller design problems into convex LMI feasibility problems. 
	
	\begin{lemma}
		\label{RS::lem::elimination}
		\cite{Hel99} Let $U\in \R^{r \times q}$, $V\in \R^{s \times p}$, $W\in \R^{q \times p}$, $P=P^T \in \R^{(p+q)\times(p+q)}$ and suppose that $P$ is nonsingular with exactly $p$ negative eigenvalues. Further, let $U_\perp$ and $V_\perp$ be basis matrices of $\ker(U)$ and $\ker(V)$, respectively.
		Then there exists a matrix $Z\in \R^{r \times s}$ satisfying
		\begin{equation}
			\mat{c}{I_p \\ U^TZV + W}^T P \mat{c}{I_p \\ U^TZV + W} \cl 0
			\label{RS::lem::eq::elim}
		\end{equation}
		if and only if
		\begin{subequations}
			\label{RS::lem::eq::eli}
			\begin{equation}
				V_\perp^T\mat{c}{I_p \\ W}^T P \mat{c}{I_p \\ W}V_\perp \cl 0
				\text{ ~~and~~ }
				U_\perp^T\mat{c}{-W^T \\ I_q}^T P^{-1} \mat{c}{-W^T \\ I_q}U_\perp \cg 0.
				\dlabel{RS::lem::eq::elia}{RS::lem::eq::elib}
			\end{equation}
		\end{subequations}
	\end{lemma}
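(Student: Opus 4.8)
The plan is to substitute $\Phi := U^T Z V + W \in \R^{q\times p}$, so that the target inequality \eqref{RS::lem::eq::elim} reads $\smat{I_p\\\Phi}^T P\smat{I_p\\\Phi}\cl 0$, and to decide when a $Z$ with this property exists. Two elementary identities drive everything: since $VV_\perp = 0$ we have $\smat{I_p\\\Phi}V_\perp = \smat{I_p\\W}V_\perp$, and since $UU_\perp = 0$ we have $\smat{-\Phi^T\\I_q}U_\perp = \smat{-W^T\\I_q}U_\perp$. In words, the perturbation $U^T Z V$ is invisible along $\ker V$ in the primal form and along $\ker U$ in the dualized form, which is exactly why \eqref{RS::lem::eq::elia} and \eqref{RS::lem::eq::elib} are free of $Z$.

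For necessity, suppose such a $Z$ exists. Right-multiplying $\smat{I_p\\\Phi}$ by $V_\perp$ and using the first identity, the matrix $\smat{I_p\\\Phi}V_\perp = \smat{I_p\\W}V_\perp$ has full column rank (its top block is $V_\perp$), so restricting the negative definite form \eqref{RS::lem::eq::elim} to its range yields \eqref{RS::lem::eq::elia} at once. For \eqref{RS::lem::eq::elib} I would first dualize the full inequality, i.e. show that $\smat{I_p\\\Phi}^TP\smat{I_p\\\Phi}\cl 0$ forces $\smat{-\Phi^T\\I_q}^TP^{-1}\smat{-\Phi^T\\I_q}\cg 0$. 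To this end set $T := \smat{I_p & -\Phi^T \\ \Phi & I_q}$, whose columns are $\smat{I_p\\\Phi}$ and $\smat{-\Phi^T\\I_q}$ and which is nonsingular because the Schur complement of its leading block $I_p$ equals $I_q+\Phi\Phi^T\cg 0$. The congruence $T^T P T$ preserves the inertia of $P$, namely $q$ positive and $p$ negative eigenvalues; since its leading $p\times p$ block $\smat{I_p\\\Phi}^TP\smat{I_p\\\Phi}$ is negative definite, Haynsworth inertia additivity forces the complementary $q\times q$ Schur complement $S$ to be positive definite. A block inversion of $T^TPT$ then identifies $\smat{-\Phi^T\\I_q}^TP^{-1}\smat{-\Phi^T\\I_q} = (I_q+\Phi\Phi^T)\,S^{-1}\,(I_q+\Phi\Phi^T)\cg 0$. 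Restricting to the range of $U_\perp$ and invoking the second identity gives \eqref{RS::lem::eq::elib}. It is precisely here that the hypothesis ``$P$ has exactly $p$ negative eigenvalues'' is indispensable: it pins the inertia so that the complementary block is forced to be definite. Note that $\smat{-\Phi^T\\I_q}^TP\smat{-\Phi^T\\I_q}$ itself may be indefinite, so the plain dualization Lemma \ref{RS::lem::dualization} does not apply verbatim and the inertia bookkeeping above is genuinely needed.

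For sufficiency --- the constructive heart of the lemma --- I would assume \eqref{RS::lem::eq::elia} and \eqref{RS::lem::eq::elib} and exhibit a suitable $Z$ explicitly. The strategy is to normalize the data: using invertible left and right transformations I would bring $U$ and $V$ to the form $(I,0)$, so that $Z$ controls a designated subblock of $\Phi$ while the complementary blocks remain equal to the corresponding blocks of $W$; simultaneously I would exploit the inertia of $P$ to expose, through Schur complements, the two projected hypotheses as negative, respectively positive, definiteness of complementary parts of the (dualized) form. With this room secured, the still-free subblock of $\Phi$ can be completed by an explicit central formula, scaled large enough in the relevant directions to dominate the indefinite quadratic contribution of $\smat{0\\I_q}^TP\smat{0\\I_q}$. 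Reversing the necessity manipulations then certifies $\smat{I_p\\\Phi}^TP\smat{I_p\\\Phi}\cl 0$ for the constructed $Z$, and $K := Z$ is the desired matrix.

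The main obstacle is exactly this construction. The difficulty is that $\Phi$ enters \eqref{RS::lem::eq::elim} \emph{quadratically} through the indefinite block $\smat{0\\I_q}^TP\smat{0\\I_q}$, so the problem is not a linear projection-lemma instance and cannot be closed by a single Finsler-type argument. Ensuring that one and the same $Z$ simultaneously repairs the primal form (in directions transverse to $\ker V$) and is consistent with the dual form (in directions transverse to $\ker U$) for general, possibly unequal, $p$ and $q$ is the delicate step, and producing a closed-form $Z$ together with the inertia count that validates it is where the real work of the proof lies.
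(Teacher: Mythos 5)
Your ``only if'' half is correct and complete. Restricting \eqref{RS::lem::eq::elim} to the range of $V_\perp$ gives \eqref{RS::lem::eq::elia}, and your inertia argument for \eqref{RS::lem::eq::elib} --- congruence of $P$ with the nonsingular $T=\smat{I_p & -\Phi^T\\ \Phi & I_q}$, Haynsworth additivity forcing the Schur complement $S$ of the negative definite leading block to satisfy $S\cg 0$ (this is precisely where ``exactly $p$ negative eigenvalues'' enters), and the identity $\smat{-\Phi^T\\I_q}^TP^{-1}\smat{-\Phi^T\\I_q}=(I_q+\Phi\Phi^T)S^{-1}(I_q+\Phi\Phi^T)$ --- is a valid, self-contained replacement for the paper's route, which instead completes $A=\smat{I_p\\ \Phi}$ by a matrix $B$ so that $(A,B)$ is nonsingular with $B^TPB$ definite and then invokes the dualization lemma \ref{RS::lem::dualization}. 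Both routes use the inertia hypothesis at the same spot, so this direction stands.

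The ``if'' half, however, is a plan rather than a proof, and the gap sits exactly where you locate it yourself. After the normalization of $U$ and $V$ by singular value decompositions (which the paper also performs, yielding transformed data $\hat P$, $\hat W$, $\hat Z$), the target inequality is Schur-complemented against the block $S^T\hat P S\cl 0$ supplied by \eqref{RS::lem::eq::elia}, and what remains is a condition of the form $\smat{I_{p_1}\\ \hat Z_{11}}^T\tilde P\smat{I_{p_1}\\ \hat Z_{11}}\cl 0$ in the completely free block $\hat Z_{11}$, with $\tilde P$ a fixed symmetric matrix. Such a graph-type solution exists if and only if $\tilde P$ has at least $p_1$ negative eigenvalues; no ``explicit central formula, scaled large enough in the relevant directions'' can manufacture one when this inertia condition fails, so your proposed completion step is unjustified as stated. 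The actual content of the sufficiency proof --- entirely absent from your sketch --- is the inertia count establishing this condition: the number of negative eigenvalues of $\tilde P$ equals that of $Q^T\hat P Q$ minus $p_2$, where $Q=(R,S)$ collects the relevant columns, and then the \emph{dual} hypothesis \eqref{RS::lem::eq::elib}, in the equivalent form $T^T\hat P^{-1}T\cg 0$, together with nonsingularity of $(T,Q)$ and a block-inversion/Schur-complement computation on $(T,Q)^T\hat P^{-1}(T,Q)$, forces $Q^T\hat P Q$ to have exactly $p$ negative eigenvalues, whence $\tilde P$ has exactly $p-p_2=p_1$. Only then can $Z$ be built, by stacking eigenvectors of the $p_1$ negative eigenvalues of $\tilde P$ into $\smat{Z_1\\Z_2}$, perturbing so that $Z_1$ becomes nonsingular while preserving strict negativity, and setting $\hat Z_{11}=Z_2Z_1^{-1}$. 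In particular, your sketch never explains how \eqref{RS::lem::eq::elib} is used in this direction, which is the crux of the lemma; without the inertia count the argument does not close.
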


	By considering the special case $P = \smat{Q & I \\ I & 0}$ and $W = 0$ for some symmetric matrix $Q$ we recover the more common variant introduced in \cite{GahApk94}. We give here a full proof of Lemma \ref{RS::lem::elimination} as it provides a scheme for constructing a solution $Z \in \R^{r \times s}$ if it exists.

	\newcommand{\inn}{\mathrm{in}_-}
	\newcommand{\im}{\mathrm{im}}
	
	\begin{proof}
		``Only if'': Multiplying \eqref{RS::lem::eq::elim} with $V_\perp$ from the right and its transpose from the left leads immediately to \eqref{RS::lem::eq::elia}. By \eqref{RS::lem::eq::elim} and since $P$ is nonsingular with exactly $p$ negative eigenvalues, we also find a matrix $B$ such that $(A, B)$ is nonsingular for $A := \smat{I_p \\ U^TZV + W}$ and such that $B^TPB \cge 0$. Applying the dualization lemma \ref{RS::lem::dualization} yields then
		\begin{equation*}
			\mat{c}{-(U^TZV + W)^T \\ I_q}^T P^{-1} \mat{c}{-(U^TZV + W)^T \\ I_q} \cg 0
		\end{equation*}
		and hence \eqref{RS::lem::eq::elib} by multiplying $U_\perp$ from the right and its transpose from the left.
		
		``If'': By the singular value decomposition we can find orthogonal $W_u$, $W_v$ and nonsingular $T_u$, $T_v$ such that
		\begin{equation*}
			U = T_u\h U W_u^T
			\teq{ and }
			V = T_v\h V W_v^T
			\teq{ with }
			%
			\h U = \diag\big(I_{q_1},~ 0_{\bullet \times q_2}\big)
			\teq{ and }
			%
			\h V = \diag\big(I_{p_1},~ 0_{\bullet \times p_2}\big).
		\end{equation*}
		With this decomposition we can express $U_\perp$ and $V_\perp$ as $W_u \big(0, I_{q_2}\big)^TX_u$ and $W_v\big(0, I_{p_2}\big)^T X_v$, respectively, for some nonsingular matrices $X_u$ and $ X_v$.
		Let us now transform the remaining matrices as $\hat P:= (\bullet)^T P \diag(W_v, W_u)$, $\h W = W_u^TWW_v$ and $\h Z := T_u^TZT_v$ with a to-be-determined matrix $Z$. 
		Further, we define 
		\begin{equation*}
			R := \left(\mat{c}{I_p \\ \h W} \mat{c}{I_{p_1} \\ 0},~ \mat{c}{0_{p \times q_1} \\ I_{q_1} \\ 0_{q_2 \times q_1}} \right) \in \R^{(p+q) \times (p_1+q_1)},\quad
			S := \mat{c}{I_p \\ \h W} \mat{c}{0\\ I_{p_2}} \in \R^{(p+q)\times p_2}
			\text{ ~and~ }
			T := \mat{c}{-\h W^T \\ I_q} \mat{c}{0 \\ I_{q_2}} \in \R^{(p+q)\times q_2}.
		\end{equation*}
		Then \eqref{RS::lem::eq::eli} is equivalent to $S^T \hat P S \cl 0$ and $T^T \hat P^{-1} T \cg 0$
		and we have
		\begin{equation*}
			\mat{c}{I_p \\ \hat U^T \hat Z \hat V + \hat W} =
			\mat{cc}{R\mat{c}{I_{p_1} \\ \h Z_{11}} & S}
			\teq{ for }
			\h Z_{11} := \mat{c}{I_{q_1} \\ 0}^T \h Z \mat{c}{I_{p_1} \\ 0}.
		\end{equation*}
		Moreover, \eqref{RS::lem::eq::elim} holds if and only if
		\begin{equation*}
			0 \cg (\bullet)^T \hat P \mat{c}{I_p
				\\ \hat U^T \hat Z \hat V + \hat W} 
			= \mat{cc}{(\bullet)^T\hat P R \mat{c}{I_{p_1} \\ \hat Z_{11}} & (\bullet)^T \\
				S^T\hat P R \mat{c}{I_{p_1} \\ \hat Z_{11}} & S^T \hat P S}.
		\end{equation*}
		Due to $S^T\h P S \cl 0$ and the Schur complement, the last inequality is equivalent to
		\begin{equation}
			\label{RS::lem::eq::elimin_intermed}
			\mat{c}{I_{p_1} \\ \hat Z_{11}}^T \left(R^T\hat PR - R^T\hat
			PS(S^T\hat P S)^{-1}S^T\hat PR\right)\mat{c}{I_{p_1}\\ \hat Z_{11}} \cl 0.
		\end{equation}
		Let $\t P$ now be the inner matrix in \eqref{RS::lem::eq::elimin_intermed} and let $\inn(M)$ denote the number of negative eigenvalues of any symmetric matrix $M$. 
		
		Next we show that $\inn(\t P) \geq p_1$. If this is true, there exists
		$\smat{Z_1 \\ Z_2} = (v_1, \dots, v_{p_1})\in \R^{(p_1 +\bullet) \times p_1}$ with $(\bullet)^T\t P \smat{Z_1 \\ Z_2} \cl 0$. We can, e.g., choose $v_1, \dots, v_{p_1}$ as the orthonormal eigenvectors corresponding to the $p_1$ negative eigenvalues of $\t P$. Via a small perturbation of $Z_1$ if necessary we can ensure that $Z_1$ is nonsingular and that $(\bullet)^T\t P \smat{Z_1 \\ Z_2} \cl 0$ remains valid. Then \eqref{RS::lem::eq::elimin_intermed} holds for $\h Z_{11} = Z_2 Z_1^{-1}$ and $Z := T_u^{-T} \smat{\h Z_{11} & \bullet \\ \bullet & \bullet} T_v^{-1}$ is a solution of \eqref{RS::lem::eq::elim} for any choice of the $\bullet$ matrices.
		
		Applying the Schur complement again yields
		\begin{equation*}
			\inn(\t P) 
			= \inn\mat{cc}{R^T\hat PR &R^T\hat P S \\ S^T\hat PR &
				S^T\hat P S} - \inn(S^T\hat PS) 
			= \inn\left((R~S)^T\hat P (R~S)\right) -p_2 
			= \inn\left(Q^T\hat P Q\right) -p_2 
		\end{equation*}
		for $Q := (R, S)$. Next, observe that $(T, Q)$ is nonsingular and, hence, we can find orthogonal $\t T$ and $\t Q$ with $\im(\t T) = \im(T)$ as well as $\im(\t Q) = \im(Q)$. With those matrices let us abbreviate $\smat{A & B \\ B^T & D} := (\t T, \t Q)^T \h P^{-1}(\t T, \t Q)$ and recall that we then have $A = \t T^T\h P^{-1}\t T \cg 0$ and 
		\begin{equation*}
			\mat{cc}{A & B \\ B^T &D}^{-1} 
			= \mat{cc}{I & 0 \\ -B^TA^{-1}&
				I}^T\mat{cc}{A & 0 \\ 0 & C - B^TA^{-1}B}^{-1}\mat{cc}{I & 0 \\
				-B^TA^{-1} & I}.
		\end{equation*}
		Then we can conclude
		\begin{equation*}
			\arraycolsep=1pt
			p = \inn(\hat P) = \inn(\h P^{-1}) = \inn(A) + \inn(C - B^TAB)
			=  \inn((C - B^TAB)^{-1}) 
			= \inn\left(\mat{c}{0 \\ I}^T\mat{cc}{A & B \\ B^T &
				C}^{-1}\!\mat{c}{0 \\ I}\right) 
			=\inn(\t Q^T\hat P\t Q).
		\end{equation*}
		Thus we finally have $\inn(\t P) = \inn\left(\t Q^T \hat
		P\t Q\right) - p_2 = \inn\left(Q^T \hat
		P Q\right) - p_2 = p - p_2 = p_1$.
	\end{proof}
	
	%

\end{document}